\documentclass[reqno,11pt]{amsart}

\usepackage{amsthm, amsmath, amssymb, stmaryrd}
\usepackage[utf8]{inputenc}
\usepackage[T1]{fontenc}

\usepackage[hyphenbreaks]{breakurl}
\usepackage[hyphens]{url}

\usepackage[shortlabels]{enumitem}
\usepackage[hidelinks]{hyperref}
\usepackage{microtype}

\usepackage{bm}
\usepackage[margin=1in]{geometry}

\usepackage[textsize=scriptsize,backgroundcolor=orange!5]{todonotes}

\usepackage[noabbrev,capitalize,sort]{cleveref}
\crefname{equation}{}{}
\numberwithin{equation}{section}

\newtheorem{theorem}{Theorem}[section]

\newtheorem{lemma}[theorem]{Lemma}

\newtheorem{conjecture}[theorem]{Conjecture}

\theoremstyle{definition}
\newtheorem{definition}[theorem]{Definition}

\theoremstyle{remark}
\newtheorem*{remark}{Remark}

\newcommand{\abs}[1]{\left\lvert#1\right\rvert}
\newcommand{\sabs}[1]{\lvert#1\rvert}

\newcommand{\ang}[1]{\left\langle #1 \right\rangle}

\newcommand{\paren}[1]{\left( #1 \right)}

\newcommand{\sqbb}[1]{\left\llbracket #1 \right\rrbracket}
\newcommand{\set}[1]{\left\{ #1 \right\}}

\newcommand{\wh}{\widehat}

\DeclareMathOperator{\codim}{codim}
\DeclareMathOperator{\Span}{span}

\newcommand{\FF}{\mathbb{F}}
\newcommand{\RR}{\mathbb{R}}
\newcommand{\NN}{\mathbb{N}}
\newcommand{\ZZ}{\mathbb{Z}}

\newcommand{\cL}{\mathcal L}
\newcommand{\cJ}{\mathcal J}
\newcommand{\cP}{\mathcal P}
\newcommand{\cF}{\mathcal F}
\newcommand{\cV}{\mathcal V}

\newcommand{\cD}{\mathcal D}

\newcommand{\cM}{\mathcal M}
\newcommand{\cB}{\mathcal B}
\newcommand{\cS}{\mathcal S}

\newcommand{\Hasse}{\mathsf H} 


\newcommand{\totalB}{\mathbb B}
\newcommand{\prioB}{\mathcal B}
\newcommand{\totalD}{\mathbb D}
\newcommand{\prioD}{\mathcal D}

\newcommand{\totalT}{\mathbb T}

\author[Tidor]{Jonathan Tidor}
\author[Yu]{Hung-Hsun Hans Yu}
\author[Zhao]{Yufei Zhao}

\address{Tidor, Zhao: Department of Mathematics, Massachusetts Institute of Technology, Cambridge, MA, USA}
\address{Yu:   Trinity College, University of Cambridge, UK}
\email{\{jtidor,hansonyu,yufeiz\}@mit.edu}

\title{Joints of varieties}

\begin{document}

\begin{abstract}
	We generalize the Guth--Katz joints theorem from lines to varieties.
	A special case says that $N$ planes (2-flats) in 6 dimensions (over any field) have $O(N^{3/2})$ joints, where a joint is a point contained in a triple of these planes not all lying in some hyperplane. More generally, we prove the same bound when the set of $N$ planes is replaced by a set of 2-dimensional algebraic varieties of total degree $N$, and a joint is a point that is regular for three varieties whose tangent planes at that point are not all contained in some hyperplane. Our most general result gives upper bounds, tight up to constant factors, for joints with multiplicities for several sets of varieties of arbitrary dimensions (known as Carbery's conjecture). Our main innovation is a new way to extend the polynomial method to higher dimensional objects, relating the degree of a polynomial and its orders of vanishing on a given set of points on a variety.
\end{abstract}

\maketitle
\section{Introduction}

Guth and Katz~\cite{GK10} proved the following ``joints theorem'': $N$ lines in $\RR^3$ have $O(N^{3/2})$ joints, where a \emph{joint} is a point contained in three of the lines that do not all lie on some plane.
This bound is tight up to a constant factor due to the following example: consider $k$ generic planes---their pairwise intersections give $\binom{k}{2}$ lines and triplewise intersections give $\binom{k}{3}$ joints. 

The joints problem was first studied in Chazelle et al.~\cite{CEGPSSS92}.
Besides being an interesting problem in incidence geometry,
it also caught the attention of harmonic analysts due to connections to the Kakeya problem as observed by Wolff~\cite{Wol99}.
This connection was further elucidated by Bennett, Carbery and Tao~\cite{BCT06} in their work on the multilinear Kakeya problem, which in turn allowed them to improve bounds on the joints problem (prior to the Guth--Katz solution).
Guth~\cite{Guth10} later adapted techniques from the solution of the joints theorem to prove the so-called endpoint case of the Bennett--Carbery--Tao multilinear Kakeya conjecture, which can be viewed as a joints theorem for tubes (also see the exposition in \cite[Section 15.8]{Guth-book}). Guth's multilinear Kakeya result was later generalized by Zhang~\cite{Zha18} to slabs and neighborhoods of varieties (though the latter does not translate back to the joints problem for flats).

The Guth--Katz solution of the joints problem highlights the importance of the  polynomial method. 
Their joints theorem was also a precursor to their subsequent breakthrough on the Erd\H{o}s distinct distances problem~\cite{GK15}, which introduced a polynomial partitioning method that has found many subsequent applications.
One of the key steps in \cite{GK15} dealt with a point-line incidence problem in $\RR^3$ with additional constraints on the configuration of lines.
These developments were partly inspired by Dvir's~\cite{Dvir09} stunningly short and elegant solution to the finite field Kakeya problem. Guth has also successfully applied the polynomial method  developed in this line of work to restriction problems related to Kakeya~\cite{Guth16,Guth18}.

Since Guth and Katz's original work, there has been significant effort in extending the joints theorem~\cite{CI14,CI20,CV14,EKS11,Hab,Ili13,Ili-thesis,Ili15a,Ili15b,KSS10,Qui09,YZ,Zha20}.
Kaplan, Sharir, and Shustin~\cite{KSS10} and Quilodr\'an~\cite{Qui09} independently extended the joints theorem from $\RR^3$ to $\RR^d$, and these techniques and results extend to arbitrary fields as stated below (also see~\cite{CI14,Dvir10,Tao14}). Given a set of lines in $\FF^d$, a \emph{joint} is a point contained in $d$ lines with independent and spanning directions. Throughout the paper, $\FF$ stands for an arbitrary field, and our constants do not depend on $\FF$.

\begin{theorem} \label{thm:joints-lines}
	A set of $N$ lines in $\FF^d$ has at most $C_d N^{d/(d-1)}$ joints, for some constant $C_d$.
\end{theorem}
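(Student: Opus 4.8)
The plan is to run the polynomial method argument of Kaplan--Sharir--Shustin and Quilodr\'an, recast so that it works over an arbitrary field. Write $L$ for the set of $N$ lines and $J$ for its set of joints, and set $m := |J|$. First I would reduce the theorem to the inequality $m \le C_d' N m^{1/d}$ (for a constant $C_d'$ depending only on $d$), which rearranges to $m \le (C_d')^{d/(d-1)} N^{d/(d-1)}$. For this, set $L_1 := L$ and $J_1 := J$, and inductively, given a line set $L_i$ with joint set $J_i := J(L_i)$, let $\ell_i \in L_i$ be a line meeting $J_i$ in as few points as possible, say in $r_i$ points, and put $L_{i+1} := L_i \setminus \{\ell_i\}$. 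The key bookkeeping point is that deleting a single line destroys only joints lying on that line, since any joint not on $\ell_i$ still has its $d$ spanning lines inside $L_{i+1}$; hence $|J_{i+1}| \ge |J_i| - r_i$. After $N$ deletions no lines remain, so no joints remain, and telescoping gives $m = |J_1| \le \sum_{i=1}^N r_i$. Using $|J_i| \le m$, it therefore suffices to prove the bound $r_i \le C_d' |J_i|^{1/d}$.

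The heart of the argument is this last bound, proved by the polynomial method applied to $J_i$. Let $P$ be a nonzero polynomial in $d$ variables of minimal degree $D$ that vanishes on every point of $J_i$; since the space of polynomials of degree at most $k$ has dimension $\binom{k+d}{d}$, injectivity of evaluation at $J_i$ on degree $\le D-1$ polynomials forces $\binom{D+d-1}{d} \le |J_i|$, hence $D \le (d!\,|J_i|)^{1/d}$. Now suppose for contradiction that $r_i > D$, i.e.\ that every line of $L_i$ meets $J_i$ in more than $D$ points. Restricting $P$ to any such line yields a univariate polynomial of degree at most $D$ with more than $D$ roots, so $P$ vanishes identically on every line of $L_i$. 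Consequently, at each joint $p \in J_i$, the $d$ lines of $L_i$ through $p$ have linearly independent directions $v_1, \dots, v_d$, and since $P$ vanishes along each of them the formal directional derivatives $v_j \cdot \nabla P(p)$ all vanish; as the $v_j$ span $\FF^d$, we get $\nabla P(p) = 0$. Thus every partial derivative $\partial_j P$ vanishes on $J_i$, and if some $\partial_j P$ is nonzero it has degree strictly less than $D$, contradicting minimality. Hence $r_i \le D \le (d!\,|J_i|)^{1/d}$, and we may take $C_d' = (d!)^{1/d}$.

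The one place this can fail is in positive characteristic $q$, where $\nabla P$ might vanish identically even for nonconstant $P$. I would dispatch this as follows: extending scalars from $\FF$ to its algebraic closure $\overline{\FF}$ only enlarges the joint set (the same $N$ lines, with the same spanning configurations at each original joint), so it suffices to prove the theorem over $\overline{\FF}$, which is perfect. Then $\nabla P \equiv 0$ forces every exponent vector occurring in $P$ to be divisible by $q$, and perfectness together with the Frobenius identity lets us write $P = R^q$ with $\deg R = (\deg P)/q < \deg P$; since $R$ vanishes wherever $P$ does, this again contradicts the minimality of $D$. The main obstacle in the whole argument is exactly this characteristic-$q$ degeneracy, together with the care needed in the pruning step to certify that each deletion costs at most $r_i$ joints; once those are handled, the polynomial-method core is routine.
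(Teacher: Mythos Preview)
Your proof is correct and is precisely the classical Kaplan--Sharir--Shustin/Quilodr\'an argument that the paper cites for this theorem and sketches in Section~2.1 (parameter counting, minimal-degree vanishing polynomial, the line-removal iteration, and the gradient contradiction); your handling of positive characteristic via passage to the algebraic closure and the Frobenius factorization $P=R^q$ is the standard fix and is fine. The paper does not give its own detailed proof of this particular statement beyond that review---its new handicap/priority-order machinery is only invoked for the higher-dimensional variety case, where it does recover Theorem~\ref{thm:joints-lines} as the special case $r=1$, $k_1=1$, $m_1=d$ of Theorem~\ref{thm:main}.
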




Recently Yu and Zhao~\cite{YZ} proved that $N$ lines in $\FF^d$ have at most $\tfrac{(d-1)!^{1/(d-1)}}{d}N^{d/(d-1)}$ joints. This leading constant is optimal, matching the above construction up to a ($1+o(1)$)-factor.

We generalize the joints theorem from lines to varieties, overcoming a fundamental difficulty with the polynomial method that one quickly runs into---we will elaborate more on this later.
A representative case of our result says the following.
Here a \emph{joint} is a point contained in a triple of planes not all lying in some hyperplane.
All our bounds on joints in this paper are tight up to a constant factor (depending on the dimension) due to a straightforward generalization of the example in the first paragraph.

\begin{theorem} \label{thm:2-flats}
	A set of $N$ planes in $\FF^6$ has $O(N^{3/2})$ joints.
\end{theorem}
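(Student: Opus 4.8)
The plan is to run the polynomial method in the spirit of \cref{thm:joints-lines}, the new ingredient being a way to control a polynomial restricted to a $2$-flat, replacing the elementary fact that a polynomial restricted to a line either vanishes identically or has at most $\deg P$ zeros. Write $J$ for the set of joints and suppose, toward a contradiction, that $\abs J > C N^{3/2}$ for a large constant $C$. First I would \emph{prune}: repeatedly discard a plane that carries fewer than $c\abs J/N$ joints of the current configuration. Discarding a plane can only destroy joints lying on it, so after fewer than $N$ discards we have destroyed fewer than $\abs J/2$ joints (taking $c\le 1/2$), and every remaining plane is now \emph{heavy}, carrying $\gtrsim \abs J/N \gtrsim N^{1/2}$ joints; so I may assume the original configuration already has this property. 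Next, since polynomials on $\FF^6$ of degree at most $D$ form a vector space of dimension $\binom{D+6}{6}=\Theta(D^6)$, there is a nonzero $P\in\FF[x_1,\dots,x_6]$ of degree $d=O(\abs J^{1/6})$ vanishing on all of $J$; fix such a $P$ of \emph{minimal} degree.

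The core of the argument is to show that $P$ vanishes identically on every one of the $N$ planes. The key estimate one wants is: if $P$ does not vanish identically on a plane $\pi$, then $\pi$ contains only $O(d^2)$ joints. The mechanism I have in mind is to slice $\pi$ (which we identify with $\FF^2$) by a curve $C\subseteq\pi$ of degree $O(\abs S^{1/2})$ through every point of $S:=J\cap\pi$, which exists by a dimension count, chosen so that it shares no component with $\{P|_\pi=0\}$; then $P|_C$ is a nonzero polynomial whose zero set on $C$ has size $O(d\cdot\abs S^{1/2})$ by B\'ezout, yet contains all $\abs S$ points of $S$, forcing $\abs S=O(d^2)=O(\abs J^{1/3})$. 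Granting this, heaviness makes every plane carry $\gtrsim\abs J/N$ joints, which for $C$ large (since then $\abs J\gtrsim N^{3/2}$, so $\abs J/N\gg\abs J^{1/3}$) exceeds what the key estimate permits; hence $P|_\pi\equiv 0$ for every plane $\pi$. Now through each joint $p$ pass three of the given planes whose direction spaces span $\FF^6$, and $P$ vanishes identically on each; hence every directional derivative of $P$ at $p$ vanishes, i.e.\ $\nabla P(p)=0$. Therefore each partial derivative $\partial_i P$ vanishes on all of $J$, has degree at most $d-1$, and --- since $P$ is nonconstant --- at least one $\partial_i P$ is nonzero, contradicting the minimality of $d$. (In positive characteristic one must separately treat the case where all $\partial_i P$ vanish, which makes $P$ a $p$-th power; extracting a $p$-th root, after a finite field extension if needed, produces a nonzero polynomial of smaller degree vanishing on $J$, exactly as in the proof of \cref{thm:joints-lines}.)

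The main obstacle is the key estimate of the previous paragraph, and the generic-curve slicing as just described is too naive. If the joints on $\pi$ are concentrated on a low-degree subvariety --- say many of them are collinear, lying on a line $m\subseteq\pi$ with $P|_\pi$ vanishing on $m$ --- then by B\'ezout every curve through $S$ of degree below $\abs S$ must contain $m$, hence lie inside $\{P|_\pi=0\}$, so $P|_C\equiv 0$ and the argument collapses; and indeed $\pi$ really can contain arbitrarily many joints in that situation. Overcoming this is where the new method enters: instead of only recording that $P$ vanishes \emph{at} the joints on $\pi$, one must relate $\deg P$ to the \emph{orders of vanishing} of $P$ along the (possibly positive-dimensional) subvarieties of $\pi$ on which the joints accumulate, peel off the degenerate part, and recurse --- that is, prove and then iterate a genuine higher-dimensional substitute for ``a polynomial restricted to a line cannot vanish more than its degree''. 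Carrying this out quantitatively, with no loss in the exponent $3/2$ and over an arbitrary field, is the crux of the whole proof.
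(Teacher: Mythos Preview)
Your proposal is not a proof: you yourself identify the gap and do not close it. The ``key estimate'' --- that a plane on which $P$ does not vanish identically carries $O(d^2)$ joints --- is simply false as stated (and you say so), because the joints on $\pi$ may all lie on a low-degree curve inside $\{P|_\pi=0\}$, in which case the B\'ezout-by-a-transversal-curve argument cannot get off the ground. What you gesture at in the last paragraph (``peel off the degenerate part, and recurse'') is a hope, not a method; there is no indication of what inequality one is aiming for after peeling, how the peeling interacts with the other two planes through each joint, or why iterating terminates with the correct exponent. As written, the argument stalls at exactly the point where the $2$-dimensional difficulty first appears.

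The paper does \emph{not} follow your outline at all: there is no pruning, no minimal-degree $P$ vanishing on $J$, and no attempt to force $P|_\pi\equiv 0$. Instead, for each plane $F$ and each degree $n$, the paper builds, joint by joint in a carefully chosen \emph{priority order} governed by integer \emph{handicaps} $\vec\alpha$, a basis $\bigcup_{p\in J\cap F}\prioB_{p,F}$ of the dual of $\RR[x,y]_{\le n}$ out of higher-order derivative evaluations along $F$. A bespoke vanishing lemma then says that for any nonzero $g$ of degree $\le n$ there is some joint $p$ and some $D_i\in\prioD_{p,F_i}$ ($i=1,2,3$) with $D_1D_2D_3g(p)\ne 0$; parameter counting turns this into $\sum_p\prod_{F\ni p}|\prioD_{p,F}|\ge\binom{n+6}{6}$. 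The role of your ``degenerate configurations on a plane'' is absorbed entirely into the choice of $\vec\alpha$: uniform boundedness, monotonicity, and Lipschitz continuity of $|\prioB_{p,F}(\vec\alpha,n)|$ in $\vec\alpha$ allow a compactness argument that equalizes the products $\prod_{F\ni p}|\prioD_{p,F}|/\binom{n+2}{2}$ across joints up to $O(1/n)$, after which AM-GM and $\sum_{p\in F}|\prioB_{p,F}|=\binom{n+2}{2}$ yield $|\cJ|\le\sqrt{10/3}\,N^{3/2}$. None of these ingredients is visible in your proposal.
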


In his PhD thesis, Ben Yang~\cite{Yang,Yangthesis} proved partial results giving an upper bound $N^{3/2+o(1)}$ when $\FF = \RR$ 
(and also more generally for bounded degree varieties in $\RR^d$---in contrast, our results on joints of varieties do not require any bounded degree hypotheses).
Yang's results have two fundamental limitations: 
(1) an error term in the exponent 
and 
(2) the methods only work over the reals.
He used a variant of the polynomial partitioning method~\cite{GK15}, which requires real topology.
More specifically, Yang applied polynomial partitioning for varieties (due to Guth~\cite{Guth15} and extended by Blagojevi\'c, Blagojevi\'c, and Ziegler~\cite{BBZ17}) using bounded degree polynomials (due to Solymosi and Tao~\cite{ST12}), 
with the latter requiring an error term in the exponent.
We introduce a novel approach that avoids both limitations.

The only other prior result on joints of higher dimensional objects says that, as a representative example, a set of $L$ lines and $F$ planes in $\FF^4$ has $O(LF^{1/2})$ joints, where now a joint is defined to be a point contained in two lines and one plane, not all lying on a hyperplane (this result was recently independently proved by Yu and Zhao~\cite{YZ} and Carbery and Iliopoulou~\cite{CI20}; Yang mentioned at the end of his thesis~\cite{Yangthesis} that he could also obtain this claim, though without details). 
Even the ``next'' case of ``line-plane-plane'' joints was open before this work. 

Incidence geometry and the polynomial method concerning higher dimensional objects often tend to be substantially more intricate compared to problems that only involve lines and points.
Our work introduces a new way to tackle such problems.
Let us highlight some other representative works on higher dimensional incidence problems.
Solymosi and Tao~\cite{ST12} introduced a bounded degree variation of the polynomial partitioning method, used in Yang's proof mentioned earlier, to give nearly tight (up to a $+o(1)$ error term in the exponent) bound for incidences between points and $k$-dimensional varieties of bounded degree in $\RR^d$, in the spirit of the Szemer\'edi--Trotter theorem~\cite{ST83} for point-line incidences in the plane. 
Using different methods, Walsh~\cite{Wal19,Wal20} recently developed powerful techniques for understanding incidences between sets of $m$-dimensional and $m+1$-dimensional varieties, thereby unifying a large body of incidence geometry results in the literature.
However, we do not see how to apply Walsh's techniques for extending the joints theorem.
The above approaches use different forms of ``partitioning'' and involve iteratively restricting the ambient space to a codimension-1 subvariety, which usually involves an increment in the degree of the ambient variety. By contrast, our strategy does not use any form of partitioning.


The main innovation of our work is a new method of relating degrees and orders of vanishing for multivariate polynomials.
Earlier approaches, e.g., \cite{ST12,Wal19,Wal20,Yang,Zha18}, consider multiple polynomials, and are related to understanding B\'ezout's theorem and possible inverses (see Tao's blog post~\cite{Tao-blog-bezout} on inverse B\'ezout).
Our approach instead only considers a single polynomial via parameter counting but we have to be extremely delicate in choosing vanishing conditions.
We motivate and explain these ideas in \cref{sec:motivation}.
The polynomial method is already a powerful technique in discrete geometry, analysis, number theory, and theoretical computer science, and we hope that our method for handling higher dimensional objects will find additional applications.

The most general version of our result is \cref{thm:car-var} below, and it implies all the other statements. 
Next we gradually introduce the various generalizations and explain the history.
The reader who is only interested in the proof of  \cref{thm:2-flats} can safely skip the rest of this section and proceed to \cref{sec:motivation} and \cref{sec:joints-of-planes} for the key ideas and the proof of \cref{thm:2-flats}.

\subsection{Joints of flats}

We extend \cref{thm:2-flats} to flats of arbitrary dimensions.
Given a collection of $k$-flats (i.e., $k$-dimensional flats) in $\FF^{mk}$, a \emph{joint} is defined to be a point contained in $m$ of these $k$-flats and not all contained in a single hyperplane.

\begin{theorem}\label{thm:flats}
	A set of $N$ $k$-flats in $\FF^{mk}$ has at most $C_{m,k} N^{m/(m-1)}$ joints, for some constant $C_{m,k}$.
\end{theorem}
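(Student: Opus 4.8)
The plan is to run a polynomial method argument by contradiction, pushing the proof of \cref{thm:joints-lines} from lines to $k$-flats. Let $\cL$ be a set of $N$ $k$-flats in $\FF^{mk}$ with set of joints $\cJ$, write $J=\abs{\cJ}$, and suppose for contradiction that $J>C_{m,k}N^{m/(m-1)}$ for a constant $C_{m,k}$ to be chosen large. Two structural observations are used throughout: every joint lies on $m$ flats of $\cL$, and since $m$ subspaces each of dimension $k$ can span $\FF^{mk}$ only when their sum is direct, the $m$ tangent flats at a joint always yield a direct sum decomposition $\FF^{mk}=V_1\oplus\cdots\oplus V_m$.

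\textbf{Step 1: pruning.} First I would carry out the standard greedy pruning. Fix a threshold $t$, to be taken of order $J/N$. While some flat of the current configuration meets fewer than $t$ of the current joints, delete that flat and then delete any point that is no longer a joint of what remains (no longer has $m$ surviving flats through it spanning $\FF^{mk}$). Each deleted flat destroys fewer than $t$ joints and at most $N$ flats are deleted, so taking $t=\lfloor J/(2N)\rfloor$ leaves a nonempty sub-configuration $\cL'\subseteq\cL$, $\cJ'\subseteq\cJ$ with $\abs{\cJ'}\ge J/2$ in which every flat of $\cL'$ meets at least $t$ joints of $\cJ'$ and every joint of $\cJ'$ still lies on $m$ flats of $\cL'$ spanning $\FF^{mk}$.

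\textbf{Step 2: a minimal polynomial and derivative descent.} Fix a vanishing order $e=e_{m,k}$. By a parameter count in $\FF^{mk}$ there is a nonzero polynomial of degree $D$ meeting the conditions ``order $\ge e$ at every point of $\cJ'$'' together with the delicately chosen extra conditions from Step 3, with $D\lesssim_{m,k}e\,J^{1/(mk)}$, since each such condition imposes a bounded number of linear constraints on coefficients; take $P$ of minimal degree among nonzero polynomials meeting all of these conditions. Suppose one can show that for every flat $V\in\cL'$ the polynomial $P$ vanishes to order at least $e$ along all of $V$ (i.e.\ $P$ and its partial derivatives of order $<e$ vanish identically on $V$). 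Then, for a joint $p\in\cJ'$ lying on flats $V_1,\dots,V_m\in\cL'$ spanning $\FF^{mk}$, translating $p$ to the origin and using coordinates adapted to $\FF^{mk}=V_1\oplus\cdots\oplus V_m$, the condition along $V_i$ says every monomial of $P$ has combined degree at least $e$ in the coordinate blocks other than the $i$-th; intersecting over all $i$, and using that the largest block-degree of a monomial is at least $1/m$ of its total degree, forces every monomial of $P$ to have total degree at least $\lceil me/(m-1)\rceil>e$. Thus $P$ vanishes to order strictly more than $e$ at every point of $\cJ'$, so a suitable partial derivative of $P$, of the positive order $\lceil me/(m-1)\rceil-e$, is a nonzero polynomial of strictly smaller degree still meeting all the imposed conditions---contradicting the minimality of $D$.

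\textbf{Step 3: the crux.} What remains is the step that at least $t$ joints lying on a single flat $V\in\cL'$ force $P$ to vanish to order $e$ along $V$; this is where a genuinely new input is required. For $k=1$ it is the elementary fact that a univariate polynomial of degree below $t$ with at least $t$ roots vanishes identically, applied to each partial derivative of $P$ of order $<e$ restricted to the line. For $k\ge2$ the naive statement is false: the zeros of $P$ on the $k$-flat $V$ may all accumulate on a lower-dimensional subflat---say $P$ is a power of a linear form whose zero hyperplane happens to contain every joint on $V$---so no bound depending only on $\deg P$, $e$, $k$ and the number of joints can work. Overcoming this is the content of \cref{sec:motivation}: one replaces the uniform ``order $\ge e$'' requirement at the joints by a delicately chosen family of vanishing conditions adapted to the local tangent-flat configuration, and one proves a quantitative lemma relating the degree of a single polynomial to the orders to which it can vanish on a finite set of smooth points of a variety while not vanishing on that variety---a lemma that degenerates to the univariate bound when the variety is a curve. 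I expect this lemma to go by induction on the dimension of the variety, slicing by generic hyperplanes while carefully controlling the degrees and vanishing orders that arise, so that a point set accumulating on a proper subvariety is absorbed by the inductive hypothesis in lower dimension; this degree control is precisely what the multiple-polynomial and inverse-B\'ezout approaches lack, and it is the main obstacle. Granting the lemma, it provides a threshold of size $\lesssim_{m,k}(D/e)^k\lesssim_{m,k}J^{1/m}$ above which a flat meeting that many joints of $\cJ'$ must carry the required vanishing; since the pruning permits $t$ of order $J/N$, all thresholds are compatible exactly when $J/N\gtrsim_{m,k}J^{1/m}$, i.e.\ $J\gtrsim_{m,k}N^{m/(m-1)}$. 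Choosing $C_{m,k}$ large enough that the standing assumption $J>C_{m,k}N^{m/(m-1)}$ implies this inequality, every step goes through and produces a contradiction, which proves the theorem.
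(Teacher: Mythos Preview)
There is a genuine gap in Step~3, and the overall architecture does not match the paper's. You correctly identify the crux: for $k\ge 2$ there is no universal threshold $f(D,e,k)$ such that $\ge f(D,e,k)$ joints on a $k$-flat, each a zero of order $\ge e$, force a degree-$D$ polynomial to vanish to order $e$ along the whole flat---your own line example already rules this out. You then propose to repair this by replacing ``order $\ge e$'' with ``delicately chosen'' conditions and proving the needed lemma by slicing and induction on dimension, but no such lemma is formulated, and the paper does not proceed this way. Worse, once the order-$e$ condition is replaced by adapted conditions, your Step~2 collapses: the derivative-descent contradiction requires both that (i) the adapted conditions are preserved when passing from $P$ to $\partial^\alpha P$, and (ii) the adapted conditions still imply ``$P$ vanishes to order $\ge e$ along every flat in $\cL'$.'' You justify neither, and in fact the paper's adapted conditions (the sets $\prioD_{p,F}$) are by design \emph{not} invariant under applying a fixed derivative---they are built inductively as a basis of linear functionals, and differentiating destroys the inductive structure.

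The paper abandons pruning, minimal-degree polynomials, and derivative descent entirely. It assigns a handicap $\alpha_p\in\ZZ$ to each joint and, on each $k$-flat $F$, uses the induced priority order to grow a basis $\bigcup_{p\in\cJ\cap F}\prioB_{p,F}$ of the dual of $\FF[y_1,\dots,y_k]_{\le n}$, with each joint $p$ contributing $|\prioB_{p,F}|$ basis elements. A bespoke vanishing lemma (\cref{lem:vanishing}) shows that for any nonzero $g$ of degree $\le n$ there exist a joint $p$ and $D_i\in\prioD_{p,F_i}$ with $D_1\cdots D_m g(p)\ne 0$; parameter counting then gives $\sum_p\prod_i|\prioD_{p,F_i}|\ge\binom{n+mk}{mk}$. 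A compactness-type argument (\cref{lem:handicaps}), using the monotonicity and Lipschitz properties of $|\prioB_{p,F}|$ in the handicap, chooses $\vec\alpha$ so that $\prod_i|\prioD_{p,F_i}|/\binom{n+k}{k}^m$ is nearly constant over $p$; the bound then follows from AM--GM and the identity $\sum_{p\in\cJ\cap F}|\prioB_{p,F}|=\binom{n+k}{k}$ on each flat. There is no ``few joints on a flat $\Rightarrow$ delete it'' step and no ``many joints on a flat $\Rightarrow$ $P$ vanishes on it'' step; the imbalance between joints is absorbed by the handicaps rather than by deletion.
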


\subsection{Multijoints}

In the joints problem, instead of a single set of lines in $\FF^d$, we can consider $d$ sets of lines $\cL_1, \dots, \cL_d$ in $\FF^d$ and consider joints formed by taking one line from each $\cL_i$ (each point is counted as a joint at most once, for now). This variation, known as ``multijoints'', can be viewed as a discrete analogue of the endpoint multilinear Kakeya problem.
The following bound on multijoints was conjectured by Carbery, proved in $\FF^3$ and $\RR^d$ by Iliopoulou~\cite{Ili15b} and in general $\FF^d$ by Zhang~\cite{Zha20}.
Note that the the multijoints theorem is equivalent to the joints theorem if $\abs{\cL_i}$ are all within a constant factor of each other.

\begin{theorem}[Multijoints of lines] \label{thm:mult-lines}
    Given $d$ sets of lines $\cL_1, \dots, \cL_d$ in $\FF^d$, the number of joints formed by taking one line from each $\cL_i$ is at most $C_d (\abs{\cL_1}\cdots \abs{\cL_d})^{1/(d-1)}$ for some constant $C_d$.
\end{theorem}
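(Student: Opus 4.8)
The plan is to prove this by the polynomial method, adapting the ``find a poor line and delete it'' strategy behind \cref{thm:joints-lines} to the setting where the sets $\cL_i$ may have very different sizes. Write $J$ for the set of joints and $N_i := \abs{\cL_i}$, and fix once and for all an assignment of a spanning tuple $(\ell_1(p),\dots,\ell_d(p)) \in \cL_1\times\dots\times\cL_d$ to each $p\in J$. The difficulty, compared to the single-set case, is precisely the possible disparity in the sizes $N_i$: a polynomial of degree $\asymp\abs{J}^{1/d}$ vanishing on $J$ need not vanish on the lines of a set $\cL_i$ much larger than $\abs{J}^{1/d}$, so the direct polynomial method (or a reduction to \cref{thm:joints-lines} applied to $\cL_1\cup\dots\cup\cL_d$) falls short of the product bound when the $N_i$ are unbalanced.

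The first step is to reduce to a \emph{poor line lemma}: in any configuration $(\cM_1,\dots,\cM_d)$ of line sets of sizes $m_1,\dots,m_d$ with joint set $J'$, either $\abs{J'} \le c_d (m_1\cdots m_d)^{1/(d-1)}$ or some $\cM_i$ contains a line meeting $J'$ in at most $c_d' (m_1\cdots m_d)^{1/(d-1)}/m_i$ points. Granting this, one deletes lines one at a time, at each stage removing a poor line of the current configuration and halting as soon as the current number of joints drops below the threshold. Since the number of joints is monotone under line deletion, the joints destroyed at a stage when $\cM_i$ still has $r$ lines number at most $c_d'\,(\prod_{l\ne i} N_l)^{1/(d-1)}\,r^{-(d-2)/(d-1)}$; summing the convergent series $\sum_{r=1}^{N_i} r^{-(d-2)/(d-1)} \asymp (d-1) N_i^{1/(d-1)}$ and then over $i$ shows that the total number of joints destroyed, and hence $\abs{J}$ itself, is at most $C_d (N_1\cdots N_d)^{1/(d-1)}$.

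To prove the poor line lemma I would argue by contradiction: suppose $\abs{J'}$ exceeds the threshold and every line of every $\cM_i$ carries more than $\tau_i := c_d'(m_1\cdots m_d)^{1/(d-1)}/m_i$ joints. Take a nonzero polynomial $P$ of minimal degree vanishing, at each $p\in J'$, to order $t_i$ along the direction of the $i$-th line of a spanning tuple through $p$ (fixed in advance), where the multiplicities $t_i$ are chosen comparably to $\tau_i$ --- hence \emph{larger} towards the \emph{smaller} sets, which are exactly the ones whose (few) lines are richest. The key point in the degree count is that prescribing vanishing along only the $d$ directions of a spanning tuple at a point costs $\sim t_1+\dots+t_d$ linear conditions, not $\sim t_1\cdots t_d$, so $\deg P \lesssim (d!\,\abs{J'}\sum_i t_i)^{1/d}$; together with the elementary inequality $(\prod_l m_l)^{(d-2)/(d-1)} \le m_{\max}^d/m_{\min}$ (immediate from $\prod_l m_l \le m_{\max}^d$ and $d(d-2)\le(d-1)^2$) and the richness hypothesis, this is designed so that $P$ must vanish identically on every line of every $\cM_i$ that meets $J'$, whence $\nabla P$ vanishes on all of $J'$ because the assigned lines through a joint span $\FF^d$.

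The step I expect to be the main obstacle is the degree reduction that should now close the argument: deriving a contradiction from ``$\nabla P$ vanishes on $J'$'' and the minimality of $\deg P$. The trouble is that the partial derivatives $\partial_k P$ vanish only to first order at the joints, not to the prescribed orders $t_i$, so one cannot simply invoke minimality; instead one must iterate the ``vanishing on rich lines forces the gradient to vanish on the joints'' dichotomy down through the successive derivatives of $P$, tracking carefully how the orders of vanishing and the remaining degree decrease, and the numerology above (the choice of the $t_i$ and the cut-off $\tau_i$) has to be arranged so that this process either terminates in an outright contradiction or exposes a line with the required deficiency of joints. In any case \cref{thm:mult-lines} is a special case of our main result \cref{thm:car-var}; it can also be extracted from Zhang's argument in \cite{Zha20}, and the finer ``multiplicity-counting'' form of Carbery's conjecture --- bounding $\sum_p m(p)^{1/(d-1)}$, where $m(p)$ is the number of spanning tuples through $p$ --- is arguably a more natural object to induct on, since its behaviour under line deletion is more transparent.
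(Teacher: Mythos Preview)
Your reduction to a poor-line lemma and the telescoping sum over deletions are correct. The gap is in the poor-line lemma itself, and you have correctly located it: the degree-reduction step does not close.

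The directional conditions you impose, $\partial_{v_i(p)}^k P(p)=0$ for $0\le k<t_i$, are indeed only $O(\sum_i t_i)$ constraints per joint, so the degree bound from parameter counting is as you state, and the one-variable vanishing lemma on each rich line goes through. But once $P$ vanishes identically on every selected line, all you learn is $\nabla P(p)=0$ at each joint; no component of $\nabla P$, nor any fixed global derivative, re-acquires the \emph{directional} conditions, because the spanning tuple $(v_1(p),\dots,v_d(p))$ varies with $p$. Your proposed iteration through successive derivatives therefore never re-engages the minimality hypothesis: at no stage does one exhibit a polynomial of strictly smaller degree satisfying the same class of constraints. This is not a matter of tuning the $t_i$ or the cutoffs $\tau_i$---it is exactly the obstruction that kept multijoints over general fields open after the single-set case was settled, and the sketch you give does not get around it.

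The paper does not attempt a poor-line or deletion argument at all. \cref{thm:mult-lines} is obtained as the $r=d$, $k_i=m_i=1$ case of \cref{thm:main}; the proof replaces ``minimal-degree polynomial'' by a direct parameter count (\cref{cor:car-param-ineq}) against a bespoke basis of vanishing functionals assembled on each line via the handicap/priority-order mechanism, arranged so that the vanishing lemma (\cref{lem:carbery-vanishing}) holds for \emph{every} polynomial of the given degree, and then balances contributions across joints by the optimisation in \cref{lem:carbery-handicaps}. Zhang's proof \cite{Zha20} is likewise a global dimension argument rather than a line-by-line iteration. Your closing sentence is the correct resolution: invoke \cref{thm:car-var} (equivalently \cref{thm:main}) or \cite{Zha20}; the direct approach you outline does not furnish an independent proof.
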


We extend the multijoints theorem from lines to flats. Here a point is a \emph{joint} formed by several flats if these flats contain this point and have spanning and independent directions.

\begin{theorem}[Multijoints of flats]  \label{thm:mult-flats}
Given $\cF_1, \dots, \cF_r$, where $\cF_i$ is a set of $k_i$-flats in $\FF^d$, with $d = k_1 + \cdots + k_r$, the number of joints formed by taking one flat from each $\cF_i$ is at most $C_{k_1, \dots, k_r} (\abs{\cF_1}\cdots \abs{\cF_r})^{1/(r-1)}$ for some constant $C_{k_1, \dots, k_r}$.	
\end{theorem}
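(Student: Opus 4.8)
My plan is to obtain \cref{thm:mult-flats} by the polynomial method, treating it as the flat case of a joints statement and combining the multilinear bookkeeping of the line multijoints theorem (\cref{thm:mult-lines}) with the new degree-versus-order-of-vanishing input that drives \cref{thm:2-flats}. A direct black-box reduction to \cref{thm:mult-lines} does not seem available: one would like to replace each $k_i$-flat $F \in \cF_i$ by $k_i$ lines through each of its multijoints, but the choice of lines depends on the multijoint, while a fixed choice of $k_i$ lines per flat produces the wrong exponent (it sees $\prod_i \abs{\cF_i}^{k_i}$ instead of $\prod_i \abs{\cF_i}$). So I would run the argument from scratch. Fix the set $J$ of multijoints, and for each $p \in J$ fix flats $F_1(p) \in \cF_1, \dots, F_r(p) \in \cF_r$ through $p$ whose direction spaces are independent and span $\FF^d$; the whole argument will proceed via a single polynomial and dimension counting, so the field is arbitrary.

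The skeleton is as follows. First, by a balanced-removal argument in the spirit of the line case — carrying a separate ``richness threshold'' $t_i$ for each family $\cF_i$ — I would reduce to bounding $\abs{J'}$ after passing to sub-families $\cF_i' \subseteq \cF_i$ and $J' \subseteq J$ in which every surviving $k_i$-flat meeting $J'$ already contains at least $t_i$ points of $J'$, the discarded multijoints costing at most $\sum_i \abs{\cF_i} t_i$. Second, choose a nonzero polynomial $g$ of smallest degree $D$ vanishing on $J'$; a parameter count gives existence with $D \asymp \abs{J'}^{1/d}$. Third — the heart of the matter — apply a robust degree-versus-order-of-vanishing relation on each surviving flat: if $t_i$ is a large enough multiple of $D$, then $g$ restricted to a $k_i$-flat $F$ cannot vanish at $t_i$ of its multijoints without being forced to vanish to higher and higher order at those points along $F$; iterating this across the $r$ flats $F_1(p), \dots, F_r(p)$ at each $p \in J'$ forces $g$ to vanish to such high order at every multijoint (the prototype being $\nabla g(p) = 0$, since the $r$ direction spaces span $\FF^d$) that feeding this back into the dimension count contradicts the minimality of $D$ unless $\abs{J'}$ is small. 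Finally, optimizing the thresholds against $\sum_i \abs{\cF_i} t_i$ — taking $t_i$ roughly proportional to $(\abs{\cF_1} \cdots \abs{\cF_r})^{1/(r-1)} / \abs{\cF_i}$ — yields $\abs{J} \lesssim (\abs{\cF_1} \cdots \abs{\cF_r})^{1/(r-1)}$.

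The main obstacle is the third step. The restriction of a degree-$D$ polynomial to a $k$-flat is a degree-$\le D$ polynomial in $k$ variables, but its zero locus inside the flat can contain arbitrarily many points lying on a lower-dimensional subflat, so one cannot simply compare the number of vanishing conditions with $\binom{D+k}{k}$: the marked points may be in highly degenerate position. Making the relation insensitive to this — by an adaptively chosen hierarchy of Hasse-derivative vanishing conditions that recurses into the subflats carrying the degenerate points while tracking how the orders of vanishing change — is exactly the delicate choice of vanishing conditions advertised in the introduction and developed in \cref{sec:motivation}, and it is what permits the optimal exponent over an arbitrary field. A secondary difficulty is the multilinear accounting in the first and last steps: one must check that the polynomial-method step only ever requires each flat of $\cF_i$ to be rich relative to its own threshold $t_i$, so that the optimization genuinely sees the product $\abs{\cF_1} \cdots \abs{\cF_r}$ and does not collapse to the weaker symmetric bound built from $\abs{\cF_1} + \cdots + \abs{\cF_r}$.
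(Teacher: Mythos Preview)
Your outline diverges substantially from the paper's actual route, and the gap you flag in step~3 is not merely ``the main obstacle'' but is fatal to the removal/minimal-degree framework you have set up.

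The paper does \emph{not} pass to rich sub-families, does not take a minimal-degree polynomial, and does not argue by contradiction with minimality. Instead (see \cref{sec:joints-of-planes} for the flat case and \cref{sec:carbery} for the general proof of \cref{thm:main}\ref{part:const-no-mult}, of which \cref{thm:mult-flats} is the specialization $m_1=\cdots=m_r=1$ with degree-$1$ varieties), it fixes a large degree parameter $n$, assigns each joint a \emph{handicap} $\alpha_p\in\ZZ$, and uses the resulting \emph{priority order} on $\cJ\times\ZZ_{\ge0}$ to add, on each flat, only non-redundant vanishing conditions at the joints. These conditions assemble into a basis of linear functionals on the space of degree-$\le n$ polynomials on the flat (\cref{eq:summation}), and a bespoke vanishing \cref{lem:vanishing} (whose proof crucially uses the priority order, not minimality of degree) shows that the product conditions across the $r$ flats at each joint kill every nonzero degree-$\le n$ polynomial on $\FF^d$. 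Parameter counting then yields \cref{lem:param-ineq}. A compactness argument (\cref{lem:handicaps}) chooses the handicaps so that the products $\prod_i\abs{\prioD_{p,F_i(p)}}/\binom{n}{k_i}$ are nearly equal across joints, after which AM--GM against \cref{eq:summation} gives the bound. Letting $n\to\infty$ kills the error terms.

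The reason your framework cannot absorb the \cref{sec:motivation} machinery is structural. The adaptive vanishing conditions there are not a tool that upgrades ``$g$ vanishes at $t_i$ points of a $k_i$-flat'' to ``$g$ vanishes to high order along the flat''; no such implication exists, and this is precisely the point of the grid/line examples in \cref{sec:motivation}. What the priority-order construction produces is a \emph{basis} of $\binom{n+k_i}{k_i}$ functionals on each flat, distributed among the joints in a way that depends on the handicaps. To get anything out of this at a joint $p$ you need the vanishing lemma, and its proof hinges on comparing $(p',r')$ to $(p,r)$ in the priority order across \emph{all} joints simultaneously --- there is no role for a removal step or a minimal-degree hypothesis. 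Your ``iterating across the $r$ flats to force $\nabla g(p)=0$'' is the line-case argument, and it does not survive the passage to $k_i\ge2$ because the restriction of $g$ to a flat need not vanish identically no matter how many marked points it hits. The balancing that your richness thresholds $t_i$ were meant to accomplish is instead done by the handicap optimization, which is why the paper's argument carries the product $\abs{\cF_1}\cdots\abs{\cF_r}$ through the final AM--GM without ever invoking a per-flat richness hypothesis.
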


\subsection{Varieties}

We extend the joints theorem from flats to varieties. Generalizing earlier notions, a point $p$ is a \emph{joint} formed by several varieties $V_1, \dots, V_r$ if $p$ is a regular point for each $V_i$ and their tangent spaces at $p$ have independent and spanning directions. (Recall that a point $p$ is a \emph{regular point} of a variety $V$ if the Zariski tangent space $T_pV$ has the same dimension as $V$.)

The proof of the joints theorem can be easily adapted from lines to algebraic curves (e.g., see~\cite{KSS10,Qui09}).
Here we extend the joints theorem to higher dimensional varieties. Given a set $\cV$ of varieties, let $\deg \cV$ denote the sum of the degrees of the elements of $\cV$.

\begin{theorem}[Joints of varieties] \label{thm:joints-var}
	A set $\cV$ of $k$-dimensional varieties in $\FF^{mk}$ has at most $C_{m, k} (\deg \cV)^{m/(m-1)}$ joints for some constant $C_{m,k}$.
\end{theorem}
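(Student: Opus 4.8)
The plan is to run the polynomial method on a single polynomial in $\FF^{mk}$, with the elementary fact ``a polynomial restricted to a line vanishes at most $\deg g$ times'' replaced by a \emph{degree versus order of vanishing} estimate on each variety, which I expect to be the technical heart. The estimate I want: there is a constant $c=c_{m,k}$ such that whenever $V\subseteq\FF^{mk}$ is irreducible of dimension $k$ and degree $e$, $p_1,\dots,p_s\in V$ are distinct regular points, and $g$ is a polynomial with $g|_V\not\equiv 0$ vanishing to order at least $t$ at each $p_i$, then $s\,t^{\,k}\le c\,e\,(\deg g)^k$. I would prove this by bounding $\dim_\FF\{h|_V:\deg h\le D\}$ by $O_{m,k}\!\big(e\binom{D+k}{k}\big)$---an effective form of the fact that the Hilbert polynomial of $V$ has degree $k$ with leading coefficient $e/k!$---and observing that a vanishing of order $\ge t$ at a regular point of $V$ imposes $\binom{t-1+k}{k}$ independent linear conditions on this space (using $\deg g\ge t-1$, which is automatic). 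I will also need a refinement for the case $\operatorname{ord}_V(g)=1$: the symbol of $g$ along $V$ is then a nonzero section of an appropriate twist of the conormal bundle of $V$ (a bundle of rank $(m-1)k$ and controlled degree) vanishing to order $\ge t-1$ at each $p_i$, which gives $s\le c_{m,k}\,e\,(\deg g)^k$ when $t=2$. Since a joint is a regular point of each variety forming it, these estimates are exactly what turns information about one ambient polynomial into information on each $V\in\cV$.

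Now the proof. Suppose for contradiction that $\cV$ has $J>C(\deg\cV)^{m/(m-1)}$ joints for a large $C=C_{m,k}$ fixed at the end, and write $\Delta:=\deg\cV$. First I pass to irreducible components; this changes neither $\Delta$ nor the joints, since a joint is a regular point of each variety forming it and hence lies on a single component with the same tangent space, so I may assume every $V\in\cV$ is irreducible. Next I prune: with $\lambda:=J/(2\Delta)$, repeatedly delete from $\cV$ any variety through which at most $\lambda\deg V$ of the current joints pass as regular points, until none remains. Deleting $V$ destroys at most that many joints, so the total loss is at most $\lambda\sum_V\deg V=J/2$; I am left with a configuration (still written $\cV$, of total degree $\le\Delta$) with a set $P$ of at least $J/2$ joints in which every surviving variety $V$ contains more than $\lambda\deg V$ points of $P$ as regular points.

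Next, by parameter counting in $\FF^{mk}$, pick a nonzero polynomial $g$ of minimal degree $D$ vanishing to order at least $2$ at every point of $P$; comparing dimensions gives $D\le\big((mk+1)!\,J\big)^{1/(mk)}$, hence $D^k\le c_{m,k}\,J^{1/m}$. Fix $V\in\cV$ and set $r:=\operatorname{ord}_V(g)$. If $r\le 1$, apply the estimate above (its $r=0$ form, or the conormal refinement when $r=1$) to the more than $\lambda\deg V$ regular-point joints on $V$, at each of which $g$ vanishes to order $\ge 2$: this gives $\lambda\deg V<c_{m,k}\,\deg V\cdot D^k$, so $J/(2\Delta)=\lambda<c_{m,k}D^k\le c_{m,k}'J^{1/m}$, and therefore $J\le c_{m,k}''\Delta^{m/(m-1)}$, contradicting the choice of $C$ once $C>c_{m,k}''$. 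So $\operatorname{ord}_V(g)\ge 2$ for every $V\in\cV$.

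It remains to rule out this last case, which is the step I expect to be the real obstacle. Fix a joint $p\in P$, formed by varieties $V_1,\dots,V_m\in\cV$ with $\FF^{mk}=\bigoplus_i T_pV_i$, and choose coordinates at $p$ making the $T_pV_i$ into coordinate blocks. Since $p$ is a regular point of $V_i$ and $\operatorname{ord}_{V_i}(g)\ge 2$, the leading term of $g$ at $p$ lies, for each $i$, in the square of the leading-term ideal of $I(V_i)$ at $p$, i.e.\ is divisible (for each $i$) by two coordinates transverse to $T_pV_i$. Summing these $m$ constraints on a monomial forces $\operatorname{ord}_p g\ge\lceil 2m/(m-1)\rceil\ge 3$. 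Hence $g$ vanishes to order $\ge 3$ at every point of $P$, so every partial derivative $\partial_j g$ vanishes to order $\ge 2$ there; as $g$ is nonconstant some $\partial_j g$ is a nonzero polynomial of degree $\le D-1<D$ vanishing to order $\ge 2$ on all of $P$, contradicting the minimality of $D$. This contradiction proves the theorem with an explicit $C_{m,k}$. The genuinely delicate points---the ones the introduction flags---are proving the degree-versus-order-of-vanishing estimate and its conormal refinement over an arbitrary field and at the (possibly non-smooth) regular points that occur, and arranging the vanishing conditions so the numerology above closes; the more general statements with multiplicities will force these orders to be chosen non-uniformly.
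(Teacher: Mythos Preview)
Your central estimate --- that if $g|_V\not\equiv 0$ vanishes to order $\ge t$ at $s$ regular points of a $k$-dimensional variety $V$ of degree $e$, then $s\,t^k\le c\,e\,(\deg g)^k$ --- is false for $k\ge 2$, and this breaks Steps~1 and~4. Take $V$ to be the coordinate $2$-plane $x_3=x_4=0$ in $\FF^4$, take any $s$ points on the $x_1$-axis inside $V$, and set $g=x_2^2$. Then $g|_V=x_2^2\not\equiv 0$, $g$ vanishes to order $2$ at each of the $s$ points, $\deg g=2$, $e=1$, yet $s$ is arbitrary. The reason your dimension count fails is that the $\binom{t-1+k}{k}$ vanishing conditions at different points need not be independent: points lying on a low-degree curve in $V$ produce highly redundant constraints. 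This is precisely the ``line configuration'' obstruction discussed in Section~2.2 of the paper. The conormal refinement has the same defect: a nonzero section of a bundle on a $k$-fold (with $k\ge 2$) can vanish along a divisor and hence at arbitrarily many points, so there is no bound of the form $s\le c\,e\,D^k$ from a single order of vanishing.

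Your Step~5 (if $\operatorname{ord}_{V_i}(g)\ge 2$ for every $V_i$ through a joint $p$, then $\operatorname{ord}_p g\ge \lceil 2m/(m-1)\rceil\ge 3$) is correct and is essentially the idea behind the paper's vanishing lemma, but it is not where the difficulty lies. The paper's method is designed exactly to replace your false estimate: instead of imposing a uniform order of vanishing at each joint and hoping the constraints are independent, it builds on each variety a \emph{basis} of linear functionals on $R_{V,\le n}$ by adding derivative evaluations at joints one at a time in a carefully chosen ``priority order'' with per-joint ``handicaps'', discarding any functional that is already in the span of the previous ones. The number of non-redundant constraints attached to each joint is then controlled not by a Schwartz--Zippel-type inequality but by the three structural properties (uniform boundedness, monotonicity, Lipschitz continuity) of how these counts vary with the handicaps, and a compactness argument is used to balance them across joints. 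There is no single-polynomial vanishing estimate of the shape you propose; the numerology only closes after this balancing. Your pruning and minimal-degree-polynomial framework does not survive the failure of the estimate, and the final sentence of your proposal correctly identifies the hard step but does not address it.
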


\begin{remark}
In this paper, all varieties are assumed to be irreducible. 
We do not lose any generality for the joints problem with this assumption as one can always replace any algebraic set by its irreducible components.
\end{remark}

Like earlier, we prove the result more generally for multiple sets of varieties.

\begin{theorem}[Multijoints of varieties] \label{thm:mult-var}
Given $\cV_1, \dots, \cV_r$, where each $\cV_i$ is a set of $k_i$-dimensional varieties in $\FF^d$, where $d = k_1 + \cdots + k_r$, the number of joints formed by taking one variety from each $V_i$ is at most $C_{k_1, \dots, k_r} (\deg \cV_1 \cdots \deg \cV_r)^{1/(r-1)}$ for some constant $C_{k_1, \dots, k_r}$.
\end{theorem}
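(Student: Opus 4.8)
The plan is to push the polynomial method through directly, in the same spirit as \cref{thm:joints-lines} but with higher-dimensional \emph{varieties} in place of lines: take a single polynomial vanishing to high order at the joints, and then control, for each variety in the configuration, how many joints it can carry in terms of the degree and the order of vanishing of that polynomial. Write $d=k_1+\dots+k_r$, let $\cJ$ be the set of joints, and set $J=\sabs{\cJ}$. A few standard reductions come first. Base changing to $\overline{\FF}$ can only increase $J$ — a rational point that is smooth on each chosen variety with spanning, independent tangent directions stays so over $\overline{\FF}$, and one may pass to an irreducible component through the point without increasing total degrees — so assume $\FF=\overline{\FF}$. In positive characteristic one works throughout with Hasse (divided-power) derivatives, so that the usual ``a polynomial of minimal degree with a prescribed vanishing pattern has no nonzero lower-degree derivative with the same pattern'' argument still produces a contradiction. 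Finally, a routine iterative pruning — repeatedly discarding a $V\in\cV_i$ that carries too few joints relative to $\deg V$ — lets us assume that every remaining $V\in\cV_i$ carries many joints, at the cost of decreasing $J$ by at most a constant factor.

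Next I would build the polynomial. Fix an auxiliary vanishing order $s$; it will ultimately cancel, but it must be taken large enough for the key lemma below. A dimension count produces a nonzero polynomial $P$ of degree $D\le C_d\,J^{1/d}s$ vanishing to order $\ge s$ at every joint: order-$\ge s$ vanishing at $J$ points is $J\binom{s-1+d}{d}$ linear conditions, which is beaten by $\binom{D+d}{d}$ as soon as $D\gtrsim_d J^{1/d}s$. Take $P$ of least degree among such polynomials (this is what licenses the Hasse-derivative argument). The varieties $V$ on which $P$ vanishes identically have to be set aside; using the minimality of $P$ together with the pruning, one argues that such varieties carry at most half of the joints, so that, discarding them, we may assume $P|_V\not\equiv0$ for every $V$ carrying a joint still under consideration.

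The core of the argument is a dimension-by-dimension estimate together with one geometric input. The geometric input is that the joints carried by a fixed $V\in\cV_i$ cannot accumulate on a proper subvariety of $V$: if infinitely many of them lay on an irreducible positive-dimensional $W\subsetneq V$, then — since every joint comes with a frame of varieties drawn from the \emph{finite} sets $\cV_1,\dots,\cV_r$ — a Zariski-dense subset of those joints would share one frame $(V=V_1^{\ast},V_2^{\ast},\dots,V_r^{\ast})$, forcing $W\subseteq V_1^{\ast}\cap\dots\cap V_r^{\ast}$; but at any joint $p$ of this frame the relation $\bigoplus_j T_pV_j^{\ast}=\FF^d$ gives $\bigcap_j T_pV_j^{\ast}=0$, so $V_1^{\ast}\cap\dots\cap V_r^{\ast}$ is $0$-dimensional at $p$, a contradiction; one needs a quantitative refinement, to the effect that the joints on $V$ do not over-populate any proper subvariety. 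The \textbf{key lemma} then reads: if $V$ is irreducible of dimension $k$ and degree $e$, $P$ has degree $D$ with $P|_V\not\equiv0$, and $S\subseteq V$ is a set of points, not over-populating any proper subvariety in the sense just described, at each of which $P$ vanishes to order $\ge s$, then $\sabs{S}\le C_k\,e\,(D/s)^k$. I would prove it by induction on $k$: the points of $S$ all lie on the hypersurface $Z(P)\cap V$; decompose it into irreducible components $Z_1,Z_2,\dots$ with multiplicities $\mu_1,\mu_2,\dots$ along which $P$ vanishes, so that $\sum_j\mu_j\deg Z_j=De$; on each $Z_j$ recurse with a Hasse derivative $\partial^{\alpha_j}P$ chosen so that $\partial^{\alpha_j}P|_{Z_j}\not\equiv0$, which has degree $\le D-\mu_j$ and vanishes to order $\ge s-\mu_j$ along $S\cap Z_j$; the base case $k=1$ is B\'ezout. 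The non-accumulation of $S$ is inherited by each $S\cap Z_j$ and is exactly what keeps the recursion from becoming vacuous. Applying this to every $V\in\cV_i$ and summing gives $J\le C_{k_i}(D/s)^{k_i}\deg\cV_i$.

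Finally I would multiply these $r$ inequalities. Since $\sum_i k_i=d$,
\[
J^{\,r}\le\Bigl(\prod_{i=1}^{r}C_{k_i}\Bigr)(D/s)^{d}\prod_{i=1}^{r}\deg\cV_i,
\]
and from $D\le C_dJ^{1/d}s$ one gets $(D/s)^{d}\le C_d^{\,d}J$; hence $J^{\,r}\lesssim J\prod_i\deg\cV_i$, i.e.\ $J^{\,r-1}\lesssim\prod_i\deg\cV_i$, with a constant depending only on $k_1,\dots,k_r$ — which is the assertion of \cref{thm:mult-var}. (Note that $s$ drops out, so it may be chosen as large as the key lemma requires.) The hard part is the key lemma: calibrating $s$ against $D$ and the component multiplicities $\mu_j$ so that the induction closes at the \emph{exact} exponent $k$ — a naive peeling of the components loses a factor of $s$ and does not suffice — and this is precisely where a delicate choice of vanishing conditions, the paper's main innovation, enters. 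The characteristic-$p$ bookkeeping with Hasse derivatives, and the separate handling of varieties contained in $Z(P)$, are secondary but also need care.
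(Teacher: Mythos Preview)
Your proposal has a genuine gap at the ``key lemma''. You correctly identify that the naive B\'ezout recursion on $\dim V$ loses a factor of $s$ per step and does not close, and you then write that ``this is precisely where a delicate choice of vanishing conditions, the paper's main innovation, enters''. But the paper's innovation is \emph{not} a device for rescuing this recursion; it is a structurally different argument that bypasses the key lemma entirely. Indeed, as the discussion in \cref{sec:motivation} makes explicit (the grid vs.\ line configurations on a plane), there is no uniform bound of the shape $\sabs{S}\le C_k\,e\,(D/s)^k$ once the points $S$ are allowed to lie on low-degree subvarieties, and your ``non-accumulation'' hypothesis does not help: the argument you give bounds $\sabs{S\cap W}$ only in terms of $\prod_i\sabs{\cV_i}$ and the degrees of \emph{all} varieties in the configuration, not in terms of $\deg W$, so it cannot feed the induction. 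This obstacle is exactly why prior attempts (Yang's) were confined to $\RR$ and carried a $+o(1)$ in the exponent.

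What the paper does instead is abandon the uniform vanishing order $s$ altogether. For each variety $V$ it builds, via the handicap/priority-order mechanism, a basis of linear functionals on $R_{V,\le n}$ consisting of derivative-evaluations at the joints on $V$; the number of functionals attached to each joint is not fixed in advance but is determined by the process, and the handicaps are then tuned (\cref{lem:carbery-handicaps}) so that a certain product of these counts is roughly equal across all joints. The vanishing lemma (\cref{lem:carbery-vanishing}) and parameter counting (\cref{cor:car-param-ineq}) then give a lower bound on this common value, while \cref{eq:sum-var} and AM--GM give an upper bound; comparing them yields the theorem. There is no step that bounds the number of joints on a single variety individually --- the argument is global from the start. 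Your steps 3--4 (minimal-degree $P$, discarding varieties on which $P$ vanishes) also do not have clean analogues here and would need separate justification, but the decisive failure is the key lemma.
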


Previously, Iliopoulou~\cite{Ili15b} proved the multijoints theorem for algebraic curves of bounded degree in $\RR^d$ (here by bounded degree we mean that the leading constant $C$ depends on the maximum degree of the curves), but it was unknown how to to generalize from $\RR^d$ to $\FF^d$, despite knowledge of the joints theorem for a single set of curves. 
This is because Zhang's proof~\cite{Zha20} of the multijoints theorem for lines (\cref{thm:mult-lines}) does not easily adapt to curves.

In the setting of real varieties, Yang~\cite{Yang} proved an upper bound of the form $C_{\epsilon}(\abs{\cV_1} \cdots \abs{\cV_r})^{1/(r-1) + \epsilon}$ for all $\epsilon>0$ where $C_{\epsilon}$ also depends on the maximum degree of the varieties.

\subsection{Joints with multiplicities}

In the above formulations of joints and multijoints theorems, each point is counted as a joint at most once. 
Motivated by Kakeya problems, Carbery suggested a generalization where joints contained in many lines are counted with multiplicity. 
The following theorem about joints of lines with multiplicities was conjectured by Carbery, proved in $\RR^3$ by Iliopoulou~\cite{Ili13}, and settled in general by Zhang~\cite{Zha20}. 

\begin{theorem}[Joints of lines with multiplicities] \label{thm:car-lines}
	Let $\cL_1, \dots, \cL_d$ be multisets of lines in $\FF^d$. 
	Let $M(p)$ denote the number of tuples of lines $(\ell_1, \dots, \ell_d) \in \cL_1 \times \cdots \times \cL_d$ that form a joint at $p$. Summing over all such joints $p$, we have
	\[
		\sum_p M(p)^{1/(d-1)} \le C_d (\abs{\cL_1} \cdots \abs{\cL_d})^{1/(d-1)},
	\]
	where $C_d$ is some constant.
\end{theorem}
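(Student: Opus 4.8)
\emph{Proof proposal.}
Write $n_i=\abs{\cL_i}$, let $J$ denote the set of joints, and let $\Phi=\sum_p M(p)^{1/(d-1)}$ be the quantity to be bounded. A line of $\cL_i$ meeting no joint of the whole configuration is never the $i$-th entry of a joint-tuple, so deleting it changes no $M(p)$; hence we may assume every line of every $\cL_i$ meets a joint. My plan is a weighted polynomial method, in the spirit of the polynomial-method proof of \cref{thm:joints-lines}: interpolate a single low-degree polynomial that is forced to vanish to \emph{high order} at every joint, with the order growing with $M(p)$; exploit that such a polynomial must vanish identically on any line carrying too much order; and then iterate, deleting lines one at a time while controlling the resulting decrease of $\Phi$.

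For a scale parameter $t>0$ and an exponent $\alpha$ to be chosen (I expect $\alpha=1/(d-1)$), set $r_p=\ceil{t\,M(p)^{\alpha}}$ for each joint $p$. Requiring a polynomial $f$ on $\FF^d$ to vanish to order $\ge r_p$ at $p$ --- meaning that all Hasse derivatives of order $<r_p$ vanish there, so as to work over an arbitrary field $\FF$ --- imposes $\binom{r_p-1+d}{d}$ linear conditions on the coefficients, so by dimension count there is a nonzero $f$ of degree $\le D$ whenever $\binom{D+d}{d}>\sum_p\binom{r_p-1+d}{d}$; thus one may take $D\le C_d\big(\sum_p r_p^{\,d}\big)^{1/d}\le C_d\,t\big(\sum_p M(p)^{\alpha d}\big)^{1/d}$. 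Fix such an $f$ of minimal degree $D_0\le D$.

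Two classical facts drive the rest. First, the restriction of a nonzero degree-$\le D_0$ polynomial to a line is a nonzero univariate polynomial of degree $\le D_0$, so $\sum_{p\in\ell\cap J}r_p\le D_0$ whenever $f\not\equiv0$ on $\ell$. Second, if $f\equiv0$ on $d$ lines through a joint $p$ with independent spanning directions, then every first Hasse derivative of $f$ vanishes at $p$, and (using the imposed vanishing of $f$) to order $\ge r_p-1$ there. Combining the second fact with minimality should, as in \cref{thm:joints-lines}, rule out the possibility that $f$ vanishes on \emph{every} joint-carrying line --- so some $\ell\in\cL_i$ meets a joint yet has $f\not\equiv0$ on it, whence $\sum_{p\in\ell\cap J}r_p\le D$. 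Now delete $\ell$ from $\cL_i$: joints off $\ell$ are untouched, while a joint $p\in\ell$ loses the $\Delta_\ell(p)$ joint-tuples whose $i$-th entry is $\ell$, so by subadditivity of $x\mapsto x^{1/(d-1)}$ the total drop in $\Phi$ is at most $\sum_{p\in\ell\cap J}\Delta_\ell(p)^{1/(d-1)}$, which one bounds by combining the bound on the total $r$-mass of $\ell$ with the fact that at most one joint-point on $\ell$ arises per choice of lines $(\ell_j)_{j\ne i}$ through it. Iterating over all $\sum_i n_i$ deletions and summing the per-step drops should yield $\Phi\le C_d(n_1\cdots n_d)^{1/(d-1)}$ once $t,\alpha$ are tuned.

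The main obstacle is exactly this tuning and bookkeeping: one must pick $\alpha=1/(d-1)$ so that the exponents align, handle unbalanced $n_1,\dots,n_d$ (by a preliminary symmetrization to the case $n_1=\cdots=n_d$, or --- as in Zhang's proof of this theorem~\cite{Zha20} --- by running the deletion coordinate-by-coordinate with a separate degree budget per $\cL_i$), and, most delicately, make the gradient/minimality step propagate the \emph{high-order} vanishing conditions, since a naive ``decrement every $r_p$ by one'' induction need not close when a single line carries many joints. In any event, \cref{thm:car-lines} is the case $k_1=\dots=k_d=1$ of our general \cref{thm:car-var} on joints of varieties with multiplicities, so it suffices to establish the latter.
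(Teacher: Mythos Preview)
Your closing sentence is correct and is precisely what the paper does: \cref{thm:car-lines} is not given a separate proof, but is derived as the special case $r=d$, $k_1=\cdots=k_d=1$, $m_1=\cdots=m_d=1$ of \cref{thm:main}\ref{part:const-mult} (equivalently \cref{thm:car-var}). So as a reduction, your proposal is fine.

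The bulk of your proposal, however --- the weighted minimality-and-deletion sketch --- has a genuine gap that you half-acknowledge but understate. The problem is exactly the minimality step. In the unweighted joints proof, if a minimal-degree $f$ vanishes on every line through every joint, then each first derivative of $f$ is a lower-degree polynomial vanishing at every joint, contradicting minimality. In your weighted version, the first derivatives of $f$ automatically vanish to order $\ge r_p-1$ at each $p$ (this follows from $f$ vanishing to order $r_p$ and needs no line hypothesis), but minimality requires a polynomial vanishing to order $\ge r_p$, not $r_p-1$. The hypothesis that $f\equiv 0$ on all lines does not restore that missing order: vanishing on a line through $p$ controls only the pure directional derivatives along that line, not the mixed ones. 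So ``decrement every $r_p$ by one'' is not a side issue to be patched but the heart of the obstruction; Zhang's actual argument in~\cite{Zha20} is organized rather differently to avoid it, and your sketch does not reproduce his mechanism.

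More to the point, the paper's proof of the general theorem that you defer to is \emph{not} a deletion/iteration argument at all. It introduces a priority order on joints governed by integer handicaps $\vec\alpha$, builds at each joint $p$ on each variety $V$ a set $\prioD_{p,V}(\vec\alpha,n)$ of derivative operators whose evaluations are linearly independent on $R_{V,\le n}$, proves that the counts $\sabs{\prioD_{p,V}}$ depend on $\vec\alpha$ in a uniformly bounded, monotone, and Lipschitz way, and then uses a compactness/extremal argument to choose $\vec\alpha$ so that the products $\prod_V \sabs{\prioD_{p,V}}$ are nearly equal across joints. A bespoke vanishing lemma plus parameter counting then gives the bound directly, with the multiplicities entering through the weights $\omega(p)=M(p)^{1/(s-1)}$ in \cref{lem:carbery-handicaps}. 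None of the ingredients of your sketch --- choosing $r_p$ a priori as a function of $M(p)$, minimal-degree polynomials, or deleting lines one at a time --- appears.
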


\cref{thm:car-lines} strengthens \cref{thm:mult-lines} (multijoints of lines). The exponent in $M(p)^{1/(d-1)}$ on the left-hand side is optimal as can be easily seen by duplicating every element in each set of lines $m$ times for some large $m$.

Yang~\cite{Yang} studied a generalization of \cref{thm:car-lines} to joints of varieties with multiplicities, but as earlier, his upper bound only holds in $\RR^d$, carries an $+o(1)$ error term in the exponent, and the leading constant depends on the maximum degree of the varieties.

Our main result, below, generalizes the above to joints of varieties counted with multiplicities. It generalizes all previously stated results.

\begin{theorem}[Joints of varieties with multiplicities]\label{thm:car-var}
	For each $i = 1, \dots, r$, let $\cV_i$ be a multiset of $k_i$-dimensional varieties in $\FF^d$, where $d = k_1 + \cdots + k_r$.
	Let $M(p)$ denote the number of tuples of varieties $(V_1, \dots, V_r) \in \cV_1 \times \cdots \times \cV_r$ that form a joint at $p$. Summing over all such joints $p$, we have
	\[
		\sum_p M(p)^{1/(r-1)} \le C_{k_1,\dots,k_r} (\deg \cV_1 \cdots \deg \cV_r)^{1/(r-1)},
	\]
	where $C_{k_1,\dots,k_r}$ is some constant.
\end{theorem}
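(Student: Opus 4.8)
The plan is to run a polynomial method argument on a single polynomial, and the heart of the matter is to replace the classical fact ``a univariate polynomial vanishes at most $\deg$ times on a line, counting multiplicity'' by a higher-dimensional analogue relating the degree of a polynomial to its orders of vanishing at chosen points on a $k$-dimensional variety. Let me set up the weighted counting first. Write $\mathcal{V}_i$ as a multiset of irreducible $k_i$-dimensional varieties, set $n_i = \deg\mathcal{V}_i$, and let $J$ denote the set of joints. For a joint $p$, $M(p)$ counts tuples $(V_1,\dots,V_r)$ forming a joint at $p$. The quantity to bound is $\sum_{p\in J} M(p)^{1/(r-1)}$. As in the line case (\cref{thm:car-lines}), the natural approach is to assign to each joint $p$ a target vanishing order roughly $\lambda(p) \asymp M(p)^{1/(r-1)} / (n_1\cdots n_r)^{\cdots}$ — more precisely, one chooses for each variety $V_i$ and each joint $p\in V_i$ a nonnegative integer weight $w_i(p)$, and then seeks a nonzero polynomial $f$ on $\FF^d$ that vanishes to order at least $\sum_i w_i(p)$ at each joint $p$ along the appropriate directions, while having degree $D$ not too large. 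The existence of such an $f$ is a linear-algebra / parameter-counting statement: the space of polynomials of degree $\le D$ has dimension $\binom{D+d}{d}$, and each vanishing condition of order $t$ at a point imposes $\binom{t+d-1}{d}$ constraints, so $f$ exists as long as $\sum_p \binom{\sum_i w_i(p) + d - 1}{d} < \binom{D+d}{d}$.

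Next I would extract the contradiction. Pick $f$ of minimal degree with these vanishing properties (after first discarding varieties carrying too little ``joint mass'', a standard pruning step so that every surviving $V_i$ is genuinely used). The key step is the following ``degree versus vanishing on a variety'' lemma, which is the paper's main innovation: if $V$ is an irreducible $k$-dimensional variety of degree $e$, and $f$ is a polynomial of degree $D$ that does not vanish identically on $V$, then the sum of the orders of vanishing of $f$ along $V$ over a generic set of points of $V$ (measured in the $k$ tangent directions to $V$) is controlled — roughly, $\sum_{p} \mathrm{ord}_p(f|_V)$ is at most something like $(\deg f)\cdot(\deg V)$ in an appropriately weighted sense, so that picking $D$ just below the parameter-counting threshold forces $f$ to vanish identically on some $V_i$. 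I would prove this lemma by slicing $V$ with a generic affine subspace to reduce to a curve, or by working on $V$ directly with a carefully chosen filtration by order of vanishing and bounding each graded piece via a Bézout-type / degree argument on $V$; the delicate point flagged in \cref{sec:motivation} is that the vanishing conditions must be chosen along the \emph{tangent directions of each $V_i$ separately}, not as a full-order vanishing, since a full-order condition would be far too expensive. Once $f$ vanishes identically on some $V_{i_0}$ through $p$, one restricts to that variety — or rather, uses that $p$ is a joint so the tangent spaces span, meaning the restriction of $f$ to $V_{i_0}$ still ``sees'' vanishing forced by the other varieties — and one sets up a descent: remove $V_{i_0}$ (or pass to a factor of $f$), decrease the parameters, and repeat, deriving a contradiction with minimality of $\deg f$ or running the induction on the total number of varieties.

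Finally, the quantitative bound is obtained by optimizing $D$ against the weights $w_i(p)$. Setting $w_i(p) \asymp M(p)^{?}$ so that $\sum_i w_i(p)$ has the right homogeneity, the parameter count $\sum_p \binom{\sum_i w_i(p)+d-1}{d} \asymp \sum_p (\sum_i w_i(p))^d / d!$ must be made to exceed $D^d/d!$; balancing this against the degree-vs-vanishing lemma applied on each $\mathcal{V}_i$ (which contributes $D \cdot n_i$ to the budget of $V_i$'s joints) yields, after an inequality of Hölder / AM–GM type across the $r$ indices $i$, the bound $\sum_p M(p)^{1/(r-1)} \le C_{k_1,\dots,k_r}(n_1\cdots n_r)^{1/(r-1)}$. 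The exponent $1/(r-1)$ appears exactly because there are $r$ varieties meeting at each joint and the spanning condition means the $r$ tangent directions ``use up'' the ambient dimension $d = k_1+\cdots+k_r$ in a way that gives each of the $r$ conditions a $1/(r-1)$-fractional power after the symmetrization. The main obstacle, as the introduction states plainly, is the degree-versus-vanishing lemma for higher-dimensional varieties: getting a clean statement with the \emph{right} notion of order of vanishing (tangential, variety-by-variety) that is both cheap enough in parameter count and strong enough to force identical vanishing. Everything else — pruning, parameter counting, the descent, and the final Hölder optimization — is modeled closely on the established proofs of \cref{thm:car-lines} and \cref{thm:joints-lines}, specialized here to the polynomial on $\FF^d$ and then transported onto each variety.
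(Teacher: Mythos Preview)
Your proposal has a genuine gap at its central step: the ``degree versus vanishing on a variety'' lemma you hope for is false, and the paper's \cref{sec:motivation} is devoted precisely to explaining why. Consider the line configuration: $t^2$ points on a line in $\RR^2$. The polynomial $y^s$ vanishes to order $s$ at every one of these points, yet has degree $s$, not the $st$ your lemma would predict. More generally, whenever the joints on a $k$-flat lie on a low-degree curve, the vanishing conditions you impose become massively linearly dependent, and no bound of the shape $\sum_p \operatorname{ord}_p(f|_V) \lesssim (\deg f)(\deg V)$ holds. Slicing $V$ by a generic affine subspace to reduce to a curve does not help, because the joints are specific points and a generic slice misses them all.

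What the paper actually does is abandon the search for such a lemma. Instead, for each variety $V$ it runs an iterative process (governed by a \emph{priority order} and adjustable \emph{handicaps} $\vec\alpha$) that adds vanishing conditions at the joints on $V$ one order at a time, discarding any condition that is already a linear combination of earlier ones. By construction the resulting conditions form a basis of the dual of $R_{V,\le n}$, so their total number is exactly $\dim R_{V,\le n} = (\deg V)\binom{n}{\dim V} + O(n^{\dim V -1})$, replacing your hoped-for inequality with an identity. The handicaps are then chosen, via a compactness argument using monotonicity and Lipschitz continuity of the condition counts, so that the product $\prod_{V\ni p}|\prioD_{p,V}|$ is approximately equal across all joints $p$ (weighted by $M(p)^{1/(r-1)}$). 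A bespoke vanishing lemma (\cref{lem:carbery-vanishing}) then says that the functionals $g\mapsto D_1\cdots D_r g(p)$ span the dual of $\FF[x_1,\dots,x_d]_{\le n}$, and parameter counting combined with AM--GM finishes. There is no descent, no minimal-degree polynomial, no removal of a variety, and no pruning; the proof is a single static comparison of dimensions once the handicaps are chosen.
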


Our proof of \cref{thm:car-var} even in the case of lines is different from that of Zhang~\cite{Zha20}. By our method, there is no significant difference between the proofs of \cref{thm:mult-var} (without multiplicities) and \cref{thm:car-var} (with multiplicities).

\subsection{Constants}

We restate \cref{thm:mult-var,thm:car-var} in the following equivalent form with explicit constansts. This superficially more general formulation (formulated in \cite{YZ} for flats) exposes a difficulty hierarchy of the problem. It also allows us to discuss the leading constants. 
While the constants below are optimal for $(r,k_1,m_1) = (1,1,d)$, they are likely not tight in all other cases.

\begin{theorem}[Main theorem]
\label{thm:main} 
Let $k_1, \dots, k_r, m_1, \dots, m_r$ be positive integers. For each $i=1,\dots,r$, let $\cV_i$ be a finite multiset of $k_i$-dimensional varieties in $\FF^d$, where $d = m_1k_1 + \cdots + m_rk_r$. We only consider joints $p$ formed by choosing $m_i$ unordered elements from $\cV_i$ for each $i = 1, \dots, r$, and we write $M(p)$ for the number of such choices.
\begin{enumerate}[(a)]
    \item \label{part:const-no-mult}
    (without multiplicities)  
    The number of joints is at most 
    \[
    C_{k_1, \dots, k_r; m_1, \dots, m_r} \paren{(\deg \cV_1)^{m_1} \cdots (\deg \cV_r)^{m_r}}^{1/(m_1 + \cdots + m_r - 1)},
    \]
    where
	\[
        C_{k_1, \dots, k_r; m_1, \dots, m_r} = \left(\frac{d!}{\prod_{i=1}^{r}k_i!^{m_i}m_i^{m_i}}\right)^{1/(m_1+\cdots+m_r-1)}.
    \]
    \item  \label{part:const-mult}
    (with multiplicities) 
    Summing over all joints $p$, one has
    \[
    \sum_p M(p)^{1/(m_1+\cdots+m_r-1)} \le C'_{k_1, \dots, k_r; m_1, \dots, m_r} \paren{(\deg \cV_1)^{m_1} \cdots (\deg \cV_r)^{m_r}}^{1/(m_1 + \cdots + m_r - 1)},
    \]
    where
    \[
        C'_{k_1, \dots, k_r; m_1, \dots, m_r} = \left(\frac{d!}{\prod_{i=1}^{r} k_i!^{m_i}m_i!}\right)^{1/(m_1+\cdots+m_r-1)}.
    \] 
\end{enumerate}
\end{theorem}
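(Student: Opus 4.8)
The plan is to prove \cref{thm:main} by a polynomial method / parameter-counting argument, following the broad structure of the Guth--Katz solution but with the crucial new ingredient being a bound on the degree of a polynomial in terms of its orders of vanishing at points lying on a $k$-dimensional variety (replacing the trivial ``a degree-$D$ polynomial restricted to a line vanishes at most $D$ times'' fact). Since parts (a) and (b) are proved the same way (as the authors remark, multiplicities add no real difficulty), I will set up everything to prove (b), the weighted bound, and read off (a) as the special case where every joint contributes $M(p)=1$.

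\textbf{Step 1: Extremal set-up and the reduction to a ``joint-rich'' configuration.} Suppose for contradiction that the claimed inequality fails for some configuration $\cV_1,\dots,\cV_r$. As in the standard proofs, I would first pass to a sub-configuration in which the failure is ``robust'': a pigeonhole/averaging argument lets me assume that every variety $V$ appearing in any $\cV_i$ carries its fair share of the weighted joint count, i.e. the sum of $M(p)^{1/(m_1+\cdots+m_r-1)}$ over joints $p$ lying on $V$ (with the appropriate local multiplicity of $V$ at $p$) is bounded below by a constant times $\deg V$ times the global ratio. This is the place where the exponent $1/(m_1+\cdots+m_r-1)$ and the product structure $(\deg\cV_1)^{m_1}\cdots(\deg\cV_r)^{m_r}$ get built in, and it is essentially the same bookkeeping as in \cite{YZ,Zha20}, just with varieties and degrees in place of lines and counts.

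\textbf{Step 2: The vanishing polynomial and the dimension count.} Given the (robust, extremal) configuration with joint set $J$, I want a nonzero polynomial $f\in\FF[x_1,\dots,x_d]$ of controlled degree $D$ that vanishes to high order at each joint $p$ — specifically, to order roughly $M(p)^{1/(m_1+\cdots+m_r-1)}$ (suitably normalized) at $p$. The number of linear conditions imposed is $\sum_p \binom{\mathrm{ord}_p + d-1}{d}$, and choosing $D$ just large enough that $\binom{D+d}{d}$ exceeds this forces such an $f$ to exist; here the precise binomial constant $d!/\prod k_i!^{m_i}m_i!$ in $C'$ will come out of comparing $\binom{D+d}{d}\sim D^d/d!$ against the analogous leading terms. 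The key delicate point, flagged in the introduction, is the \emph{choice of vanishing conditions}: rather than asking for a single high order of vanishing, I expect one must prescribe, at each joint, vanishing along the tangent directions of the participating varieties with carefully chosen \emph{mixed} orders (an order $a_i$ in the $k_i$ tangent directions of the $i$-th variety, with $\sum m_i a_i$ calibrated), so that the count matches the Carbery exponent exactly.

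\textbf{Step 3: The new degree-vs-vanishing lemma on varieties — the main obstacle.} The heart of the argument, and the step I expect to be hardest, is to derive a contradiction by showing that $f$ cannot have the prescribed vanishing: one restricts $f$ to a single $k$-dimensional variety $V$ from some $\cV_i$ and argues that a polynomial of degree $D$ on $V$ that vanishes to the prescribed orders at ``too many'' joints on $V$ (weighted by their $M$-values) must be identically zero on $V$, contradicting irreducibility/nonvanishing off the joint locus once Step 1's robustness is invoked. For lines this is immediate; for a $k$-flat it should follow by slicing the flat by generic lines; but for a genuine $k$-dimensional variety of degree $e$, this requires the paper's promised innovation relating $\deg f$ to the orders of vanishing of $f|_V$ at points of $V$ — presumably proved by a projection/generic-hyperplane-section induction on $k$ that accumulates a factor of $\deg V$, together with the method of multiplicities to track orders of vanishing through the restriction. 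Once this lemma is in hand, summing the resulting inequality over all $V$ in all $\cV_i$, combining with the robust lower bounds from Step 1, and optimizing $D$ yields the stated bound with the explicit constant; the ``without multiplicities'' case (a) drops out by setting all weights to $1$, and the equivalence of \cref{thm:main} with \cref{thm:mult-var,thm:car-var} is the routine pigeonhole/duplication argument already indicated in the text.
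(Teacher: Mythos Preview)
Your proposal has a genuine gap at Step~3, and it is precisely the obstacle the paper identifies and circumvents by a different mechanism. You posit a lemma of the form ``a degree-$D$ polynomial on a $k$-dimensional variety $V$ that vanishes to prescribed high orders at many points of $V$ must vanish identically on $V$,'' to be proved by generic projection/slicing. But no such uniform lemma exists: the number of \emph{independent} vanishing conditions that a collection of order-$s$ vanishings at $t^2$ points of a $2$-flat actually imposes on $\RR[x,y]_{\le n}$ depends heavily on the configuration of the points (compare a $t\times t$ grid with $t^2$ collinear points --- the latter lie on a degree-$1$ curve and the conditions are massively redundant). The paper makes this failure explicit in \cref{sec:motivation}, and your proposed induction on $k$ by hyperplane sections does not repair it, since slicing a $2$-flat by a generic line misses almost all of the joints.

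What the paper actually does is abandon the search for a universal degree--vanishing lemma on a single variety. Instead, for each variety $V$ it builds up a basis $\bigcup_p \prioB_{p,V}$ of the dual of $R_{V,\le n}$ by adding vanishing conditions at the joints on $V$ \emph{in stages}, according to a priority order governed by integer ``handicaps'' $\alpha_p$; only non-redundant conditions are ever added, so the total count is exactly $\dim R_{V,\le n}$ regardless of how the joints sit inside $V$. The handicaps are not computed explicitly but are shown to exist by a compactness/minimality argument (\cref{lem:handicaps}, \cref{lem:carbery-handicaps}), using monotonicity and Lipschitz continuity of $|\prioB_{p,V}|$ in $\vec\alpha$, so that the product $\prod_{V\ni p}|\prioD_{p,V}|$ is essentially constant across joints. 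The contradiction then comes not from restricting $f$ to a single variety, but from a bespoke vanishing lemma \emph{at each joint} (\cref{lem:vanishing}, \cref{lem:carbery-vanishing}): if $D_1\cdots D_s g(p)=0$ for all $D_i\in\prioD_{p,V_i(p)}$ and all $p$, then $g\equiv 0$. Your Step~1 (pass to a joint-rich subconfiguration) is also absent --- the handicap-balancing replaces it entirely.
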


Let us explain how various specializations of \cref{thm:main} correspond to earlier results.
\begin{enumerate}
    \item (Joints of lines) \cref{thm:joints-lines} corresponds to \cref{thm:main}\ref{part:const-no-mult} for $r= 1$, $k_1 = 1$, $m_1 = d$, and degree 1 varieties. In this case, the optimal constant $C_{1;d} = (d-1)!^{1/(d-1)}/d$ was determined previously in \cite{YZ} and matches the constant above.
    \item (Joints of flats) \cref{thm:flats} corresponds to \cref{thm:main}\ref{part:const-no-mult} for $r=1$, $k_1 = k$, $m_1 = m$, $d = km$, and degree 1 varieties.
    \item (Multijoints of lines) \cref{thm:mult-lines} corresponds to \cref{thm:main}\ref{part:const-no-mult} with $r = d$, $(k_i, m_i) = (1,1)$ for all $i$, and degree 1 varieties. This case was previously known~\cite{Zha20}. 
    The constant in this case was improved to $C_{1,\dots,1;1,\dots,1} = d!^{1/(d-1)}$ in \cite{YZ} (matching above) though it is likely not optimal. 
    Without consideration of constants, \cref{thm:mult-lines} also easily implies the setting allowing $m_i \ge 1$ by duplicating the sets of lines.
    \item (Multijoints of $k$-flats) \cref{thm:mult-flats} relaxes the $k_i = 1$ assumption above to arbitrary $k_i \ge 1$. Previously the only other known case is $(r;k_1, k_2; m_1,m_2) = (2;k,1; 1,d-k)$, i.e., a set of $k$-flats and a set of lines, where each joint is formed by one $k$-flat and $d-k$ lines, as proved independently by \cite{CI20} and \cite{YZ} (and stated without proof in \cite{Yang}). Even the ``next'' case of $(r;k_1, k_2; m_1, m_2) = (2;2,1;2,1)$ was previously unknown, corresponding to having a joint being formed by two flats and one line. Likewise, the case $(r;k_1, k_2, k_3; m_1, m_2, m_3) = (3; 2,1,1;1,1,1)$ allowing one set of flats and two different sets of lines was also previously unsolved.
    \item (Varieties) \cref{thm:joints-var,thm:mult-var} relax the degree 1 assumption, generalizing from flats to varieties. The only previously known case was for a single set of curves~\cite{KSS10,Qui09}, namely $r=1$ and $k_1 = 1$, as well as multiple sets of bounded degree curves in $\RR^n$~\cite{Ili15b}. \cref{thm:mult-var} is equivalent to \cref{thm:main}\ref{part:const-no-mult} (other than constants).
    \item (Multiplicities) Finally, adding in considerations of joint multiplicities, \cref{thm:car-lines} is equivalent to \cref{thm:main}\ref{part:const-mult} for lines, while \cref{thm:car-var} is equivalent to \cref{thm:main}\ref{part:const-mult} in general (other than constants).
    For a single set of lines, i.e., $(r,k_1,m_1) = (1,1,d)$, our result gives $C_{1;d} = 1$.
\end{enumerate}

While we know the optimal constant for joints of lines, our proof does not seem to give the optimal constant for flats or varieties. For $r=1$ we conjecture that the optimal constant in \cref{thm:main} is $C_{k,m} = (m!/m^m)^{1/(m-1)} N^{m/(m-1)}$ for all $k$ and $m$, agreeing with joints of lines ($k=1$). 
The first open case $(k,m) = (2,3)$ is stated below.

\begin{conjecture}
A set of $N$ planes in $\FF^6$ has at most $(\sqrt{2}/3 + o(1))N^{3/2}$ joints.
\end{conjecture}

\subsection{Outline}
We begin by motivating and describing, in \cref{sec:motivation}, the key new ideas in our method.
We then give, in \cref{sec:joints-of-planes}, the proof in the special case of joints of planes in $\RR^6$, which is representative of the general result.
To obtain the result in full generality, we use higher order directional derivatives with respect to local coordinates along a variety, as well as Hasse derivatives to deal with arbitrary fields, and they are both discussed in \cref{sec:derivatives}.
The complete proof of the main theorem then appears in \cref{sec:carbery}.

\section{Key ideas} \label{sec:motivation}

\subsection{Joints of lines}

We begin by recalling the proof of \cref{thm:joints-lines} on joints of lines in $\RR^3$ following \cite{KSS10,Qui09} (also see Guth's book \cite[Section 2.5]{Guth-book} for a nice exposition). The proof exposes two tools that are essential in nearly all applications of the polynomial method: \emph{parameter counting} and \emph{vanishing lemma}.

Let $\RR[x_1, \dots, x_d]_{\le n}$ denote the space of polynomials with degree at most $n$. Using that its dimension is $\binom{n+d}{d}$, we have the following simple yet extremely useful linear algebraic consequence.

\begin{lemma}[Parameter counting]
	Given a set of fewer than $\binom{n+3}{3}$ points in $\RR^3$, there exists a nonzero polynomial of degree at most $n$ that vanishes on all these points.
\end{lemma}

Given a set of $N$ lines forming $J$ joints in $\RR^n$, let $g$ be a nonzero polynomial of minimum degree that vanishes at all $J$ joints. By the parameter counting lemma, we have $\deg g \le C J^{1/3}$ for some constant $C > 0$.

The following elementary fact is key to the polynomial method.

\begin{lemma}[Vanishing lemma]
	If a degree $n$ polynomial vanishes at more than $n$ points on a line, then it vanishes on the whole line.
\end{lemma}

We claim that some line contains at most $C J^{1/3}$ joints. Suppose, for contradiction, that every line contains more than $CJ^{1/3}$ joints. 
Since $\deg g \le C J^{1/3}$, the vanishing lemma implies that $g$ vanishes on each of the $N$ lines. 
Since each joint is contained in three lines in spanning directions, the gradient $\nabla g$ vanishes at every joint.
Thus $\partial g/\partial x$, $\partial g/\partial y$, $\partial g/\partial z$ all vanish at every joint. At least one of these partial derivatives is a nonzero polynomial of degree smaller than that of $g$, thereby contradicting the minimal degree assumption on $g$.

Thus some line contains at most $C J^{1/3}$ joints. We can then remove this line and all its joints, and repeat the argument to find another line with at most $CJ^{1/3}$ joints. After we have removed all the lines, we have removed at most $CJ^{1/3} N$ joints, so $J \le CJ^{1/3} N$, and hence $J = O(N^{3/2})$. This completes the proof in the case of $\RR^3$. This proof also extends to $\FF^d$.

\subsection{Vanishing on planes}

How can we try to adapt the above proof to show that $N$ planes in $\RR^6$ form $O(N^{3/2})$ joints? The main obstacle is to generalize the vanishing lemma from lines to planes. The above proof would extend verbatim to joints of planes if the answer to the following question were yes.

\medskip

\noindent\emph{Attempt I.} Given distinct points $p_1,\ldots,p_{\binom{n+2}2}$ in the plane, if $g\in\RR[x,y]_{\leq n}$ satisfies the vanishing conditions $g(p_1)=0,\ldots,g(p_{\binom{n+2}2})=0$, does this imply that $g$ is identically zero?

\medskip

Of course, the answer to this question is no, since the vanishing locus of the polynomial on a plane could be a curve.
Clearly it is impossible to force a two-variable polynomial to vanish by forcing it to vanish at any finite number of points. Instead of asking for polynomials to vanish at the joints, we can ask them to vanish to high multiplicity at the joints.
This idea, known as the ``method of multiplicities''~\cite{DKSS13}, has been fruitful in the study of the joints problem  \cite{Zha20,YZ}, and it was also used to improve bounds on the finite field Kakeya problem~\cite{Dvir09,BW21}. 

\medskip

\noindent\emph{Attempt II.} Given a point $p_1$ in the plane, if $g\in\RR[x,y]_{\leq n}$ vanishes to order more than $n$ at $p_1$; equivalently, if $g$ satisfies the vanishing conditions $\frac{\partial^{i+j}g}{\partial x^i\partial y^j}(p_1)=0$ for all $0\leq i+j\leq n$, does this imply that $g$ is identically zero?

\medskip

The answer to this one is yes, and it shows how using derivatives creates a correct vanishing lemma. However, this vanishing lemma is completely useless for our application since we want to use this vanishing lemma somehow to bound the number of joints lying on a plane and this method ignores all of the joints but one on each plane.
Perhaps we can create a correct and useful vanishing lemma by combining the ideas of Attempts I and II.

\medskip

\noindent\emph{Attempt III.} Given distinct points $p_1,\ldots, p_m$ in the plane with $m\sim n^2/s^2$, if $g\in \RR[x,y]_{\leq n}$ vanishes to order at least $s$ at each point, does this imply that $g$ is identically zero?

\medskip

Unfortunately the answer is no again. Indeed $g(x,y) = y^s$ vanishes to order $s$ on the entire $x$-axis. 

We have $\dim \RR[x,y]_{\le n} = \binom{n+2}{2}$, less than the number of linear constraints on the coefficients of $g$ imposed by asking $g$ to vanish to order at least $s$ on $\Theta(n^2/s^2)$ given points (each such point gives $\binom{s+2}{2}$ constraints). 
This counterexample must imply that some of these linear constraints are linearly dependent.
Our proof strategy is to build a vanishing lemma using a linearly independent set of such constraints on the coefficients of $g$.

\begin{remark}
Another very natural strategy for extending the proof of joints of lines to planes is to consider, instead of a single polynomial that vanishes on all the joints, now a pair of polynomials that vanish on all the joints. For this approach to be useful, one would like the pair of polynomials, when restricted to each plane, to either be coprime or one of them to vanish. This seems like a difficult condition to satisfy and we suspect that it is not possible, at least if one wants the degrees of the polynomials to be small.

This problem appears to be related to the \emph{inverse B\'ezout problem}. 
Given a set of $N$ points in $\RR^2$, can one always find a pair of coprime polynomials $P, Q$ both vanishing on all $N$ points and $(\deg P)(\deg Q) = O(N)$?
The answer is no, by putting half of the $N$ points on a $\sqrt{N/2} \times \sqrt{N/2}$ grid and the other half on a line (this grid-and-line example shows up again in our discussion below). A partial converse to B\'ezout's theorem is known in 2-dimensions but open in higher dimensions (see Tao \cite{Tao-blog-bezout}).
\end{remark}

\subsection{Key idea I: collecting linearly independent vanishing conditions}

We define a \emph{vanishing condition} to be a single homogeneous linear constraint on the coefficients of a polynomial $g \in \RR[x,y]_{\le n}$ that arises from requiring some particular higher order directional derivative to vanish at some point. 
For example, for a two variable polynomial $g$, some examples of vanishing conditions are
(a) $g(2,4) = 0$,
(b) $\frac{\partial g}{\partial x}(2,1) = 0$,
and
(c) $\paren{\frac{\partial^2 g}{\partial x^2}-\frac{\partial^2 g}{\partial x\partial y}}(-1,2) = 0$.
For a positive integer $r$, an \emph{$r$-th order vanishing condition} on $g$ at $p$ is a vanishing condition of the form $Dg(p) = 0$ where $D$ is an $(r-1)$-th order derivative operator, i.e., a linear combination of $\partial^{r-1}/\partial^{r_1} x_1 \cdots \partial^{r_d}x_d$ for some $r_1+\cdots + r_d = r-1$. (We will not need mixed order vanishing conditions for joints of flats, but they will be needed for joints of varieties.)

For now, let us focus on a single plane and study vanishing conditions on $g \in \RR[x,y]_{\le n}$.
Vanishing conditions can be viewed as linear functionals on the vector space $\RR[x,y]_{\le n}$, though it will be helpful later to also keep track of the (derivative operator, point) pair $(D,p)$ that generates the vanishing condition $Dg(p) = 0$.

We now devise a procedure for selecting a basis of linear functionals on $\RR[x,y]_{\le n}$.

As a first attempt, we fix an arbitrary order on $\cP$, say $p_1, \dots, p_r$ and cycle through the points
(the vertical bars are a visual aid separating the epochs)
\[
p_1 \ p_2 \ \cdots p_r \mid
p_1 \ p_2 \ \cdots p_r \mid
p_1 \ p_2 \ \cdots p_r \mid \cdots.
\]
We cycle through the points in the above sequence and maintain a linearly independent set of vanishing conditions on $\RR[x,y]_{\le n}$, starting from an empty set of vanishing conditions.
The $r$-th time ($r = 1, 2, \dots$) that we see a point $p$, we add to our existing collection a maximal subset of $r$-th order vanishing conditions so that our collection of vanishing conditions always remains linearly independent as a set of linear functionals on $\RR[x,y]_{\le n}$. Eventually, the process terminates once we have collected a basis of $\binom{n+2}{2}$ linear functionals on $\RR[x,y]_{\le n}$.

Although there is some choice in the above process in deciding which vanishing conditions to add to our collection at each step, the \emph{number} of vanishing conditions added at each step does not depend on this choice. 
We would like to understand and control the number of vanishing conditions attached to each point as we run through the process. 
However, this does not seem easy. We do not know how to compute these numbers (for large $n$) even for an explicitly given set of points.

More importantly, the process does not always evenly assign the vanishing conditions across all the points. 
For example, suppose we have $\abs{\cP} = 2t^2$, with half of the points in $\cP$ forming an $t \times t$ grid (a high-degree part), and the other $t^2$ points all lying on a single generic line (a low-degree part).
As we run through the above process, we encounter significantly more linear dependencies among vanishing conditions at points on the line than on the grid.
For large $n$, at the end of the process, each point on the grid receives on the order of $t$ times as many vanishing conditions as each point on line.
This is an undesirable situation, since the process leads to an unequal distribution of vanishing conditions, effectively ``ignoring'' the points on the low-degree algebraic structure.

\subsection{Key idea II: handicaps and priority order}

To address the uneven distribution of vanishing conditions across points, we give the ``disadvantaged'' points a head start and cycle just among themselves many times before we cycle through the entire set of points.
For example, in the earlier grid-and-line example, if $p_1, \dots, p_{r/2}$ are points on the line and $p_{r/2+1}, \dots, p_r$ are points on the grid, then we give points on the line a head start, e.g., 
\[
p_1 \ p_2 \cdots p_{r/2} \mid
\cdots\mid
p_1 \ p_2  \cdots p_{r/2} \mid
p_1 \ p_2 \cdots p_r \mid 
p_1 \ p_2 \cdots p_r \mid \cdots.
\]
More generally, we give each point $p$ a \emph{handicap} $\alpha_p \in \ZZ$ corresponding to the number of rounds of head start.

For example, suppose there are five points labeled $a,b,c,d,e$ that we would cycle through in this order. 
Now we assign handicaps $0,1,3,0,-1$ to $a,b,c,d,e$ respectively. Then, for instance, $c$ starts in round $-3$ and $b$ starts in round $-1$. So we process the points in the following \emph{priority order}:
\[
c \mid
c  \mid
b \ c  \mid
a\ b\ c\ d\   \mid
a\ b\ c\ d\  e  \mid
a\ b\ c\ d\  e  \mid
\cdots.
\]
We now run the same vanishing condition collection process as earlier with this sequence of points.
The $r$-th time ($r = 1, 2, \dots$) that we see a point $p$, we append to our existing collection a maximal non-redundant set of $r$-th order vanishing conditions at $p$.

We would like to assign handicaps in a way so that all joints are treated equitably in the distribution of vanishing conditions (what this means precisely will be explained later). However, it appears to be a very difficult problem to determine how exactly the distribution of vanishing conditions depends on the handicaps. Intuitively, as in the grid-and-line example, we want to assign more handicap to points that are part of low-degree algebraic substructures, but it is far from obvious how to make this notion precise and useful.

\subsection{Key idea III: existence of a good handicap via compactness/smoothing}

Instead of explicitly assigning handicaps, we shall indirectly prove the existence of a good choice of handicaps via a compactness/smoothing argument. (Strictly speaking, we do not actually invoke compactness here since all our domains are finite, but we believe that compactness offers a helpful perspective as the argument here is a significant generalization of the earlier compactness argument giving tight bounds for joints of lines~\cite{YZ}.)

Fix a joints configuration. Let $n$ be large and consider the function
\begin{equation}\label{eq:handicap-partition-fn}
\text{handicaps $\alpha \in \ZZ^\cP$} \longrightarrow
\text{partitions of $\tbinom{n+2}{2}$ among $\cP$}
\end{equation}
where the partition records the final number of vanishing conditions assigned to each point.
While it appears to be difficult to compute this function explicitly, we can show that it has the following three properties.

\medskip

\noindent \emph{Bounded domain.} If one point has a much bigger handicap than another point, then the latter point gets assigned no vanishing conditions since the process would have finished before the first appearance of the latter point. Such a situation will never be desirable, so we only need to consider cases where the handicaps are all bounded (as a function of $n$).

\smallskip
    
\noindent \emph{Monotonicity.} 
Suppose we increase the handicap by one at a subset of points while holding others fixed.
Then the number of vanishing conditions assigned to this subset of points cannot decrease, and the number of vanishing conditions assigned to the other points cannot increase. 
Indeed, the points with the increased handicap now appear earlier in the priority order, and thus cannot receive fewer vanishing conditions than before the change. 

\smallskip
    
\noindent \emph{Lipschitz continuity.} A small change in the handicap assignments can only induce a small change in the number of vanishing conditions at each point. This property is intuitively reasonable, but it requires a proof.

\medskip

With these three properties, we can iteratively increase the handicaps at points that end up with too few constraints, so that we eventually balance out the distribution of constraints across all joints.

The eventual implicit assignment of handicaps across joints appears to somehow identify the ``algebraicity'' of each point in the configuration by assigning higher handicaps to points lying in lower-degree algebraic substructures. However, we do not know how to make this algebraicity intuition precise.

\begin{remark}
This idea of implicitly assigning handicaps came up in a simpler form previously in the work of Yu and Zhao~\cite{YZ} in determining the tight constant for the joints theorem of lines. 
There one does not have to consider any priority order or iterative process of adding constraints as we do here, though one does end up proving, via compactness, the existence of a handicap (though not called by that name) along with other parameters for controlling the order of vanishing at each joint.
\end{remark}

\subsection{Putting everything together: a new vanishing lemma}

Suppose we have a set $\cF$ of planes in $\RR^6$ forming joints $\cJ$.
For a choice of handicaps $\vec \alpha \in \ZZ^\cJ$, and a large integer $n$, we can run the above vanishing condition collection procedure separately on each plane (using handicaps $\vec\alpha$ restricted to points on the plane). 
On each plane $F \in \cF$, and at each joint $p$ on the plane $F$, the procedure attaches a set $\cD_{p,F} = \cD_{p,F}(\vec\alpha, n)$ of derivative operators.
Combining these vanishing conditions over all joints on $F$ then gives  a basis of linear functionals on the space of polynomials $g$ on $F$ of degree at most $n$, where each basis element is a vanishing condition of the form $Dg(p) = 0$ with $p \in \cF$ and $D \in \cD_{p,F}$ being a linear combination of higher order directional derivatives along $F$. With this data, we can now state our new vanishing lemma for joints of planes.

\medskip

\noindent\textbf{Vanishing lemma for joints of planes} (\cref{lem:vanishing}). \emph{With the above setup, if $g \in \RR[x_1, \dots, x_6]_{\le n}$ satisfies $D_1D_2D_3g(p) = 0$ whenever 
$D_i \in \cD_{p, F_i}$ are three derivative operators attached to three planes $F_1, F_2, F_3$ forming a joint $p \in \cJ$, then $g = 0$.}

\medskip 

Note that we are choosing a minimal set of derivative operators on each plane (as we chose a basis of linear functionals).
The vanishing lemma would be trivial if each $\cD_{p,F_i}$ were the full set of directional derivative operators at $p$ along $F$.

Also, our proof of the vanishing lemma only works if we build the vanishing conditions following the priority order---we would not be able to say much if the joints were processed in some other arbitrary manner.

By \textbf{parameter counting}, this new vanishing lemma implies the following inequality. Summing over joints $p$ formed by a triple of planes $F_1, F_2, F_3$, we have
\[
\sum_{(p, F_1, F_2, F_3)} \abs{\cD_{p,F_1}}\abs{\cD_{p,F_2}}\abs{\cD_{p,F_3}} \ge \dim \RR[x_1, \dots, x_6]_{\le n} = \binom{n+6}{6}.
\]
The left-hand side is the number of linear constraints on $g$ of the form $D_1D_2D_3g(p) = 0$ in the vanishing lemma. Indeed, if this inequality were not satisfied, by parameter counting there would be a non-zero polynomial $g$ of degree at most $d$ satisfying these vanishing conditions. However, the vanishing lemma implies that such a $g$ is identically zero, a contradiction.

Recall that all these quantities $\abs{\cD_{p,F}}$ depend on $n$ as well as the handicap $\vec\alpha$.
We can now apply a compactness/smoothing argument to choose a handicap $\vec\alpha$ that minimizes 
\[
\max_p \abs{\cD_{p,F_1}}\abs{\cD_{p,F_2}}\abs{\cD_{p,F_3}} 
-
\min_p \abs{\cD_{p,F_1}}\abs{\cD_{p,F_2}}\abs{\cD_{p,F_3}}.
\]
Using the three properties (bounded domain, monotonicity, Lipschitz continuity) of \cref{eq:handicap-partition-fn}, we can deduce that the above difference must be negligible, i.e., $o(n^6)$, since otherwise we can significantly reduce the above difference by increasing the handicap by 1 at a subset of points $p$ with small $\abs{\cD_{p,F_1}}\abs{\cD_{p,F_2}}\abs{\cD_{p,F_3}}$.

It follows that we can choose handicaps so that the product $\abs{\cD_{p,F_1}}\abs{\cD_{p,F_2}}\abs{\cD_{p,F_3}}$ is roughly constant across all $(p,F_1,F_2,F_3)$.
We also know that for each plane $F$, $\sum_{p \in F}\abs{\cD_{p,F}} = \dim \RR[x,y]_{\le n} = \binom{n+2}{2}$ since we have a basis of linear functionals on the space of polynomials on $F$ with degree at most $n$.
The conclusion $\abs{\cJ} = O(N^{3/2})$ then follows from a short calculation using the AM-GM inequality (see the end of \cref{sec:joints-of-planes}).

\medskip

In \cref{sec:joints-of-planes}, we flesh out these ideas to give a complete proof of joints of planes in $\RR^6$. In \cref{sec:derivatives} we discuss two further modifications to the above proof technique. To deal with varieties, we modify our notion of higher order directional derivatives. Geometrically we are taking derivatives with respect to local coordinates on the varieties. To deal with general fields other than the reals, we use Hasse derivatives.

\section{Joints of planes in $\RR^6$}\label{sec:joints-of-planes}

The purpose of this section is to prove that $N$ planes in $\RR^6$ have $O(N^{3/2})$ joints. This special case contains many of the key ideas that we introduce in this paper towards the full theorem.

Let $(\cJ, \cF)$ be a \emph{joints configuration} of planes in $\RR^6$, where $\cF$ is a finite set of planes and $\cJ$ is the set of joints formed by any three planes in $\cF$.
We abuse notation slightly to handle the case when more than three planes pass through $p \in \cJ$: in this case we arbitrarily choose three planes forming a joint at $p$, and only write ``$p \in F$'' (and say that ``$F$ contains $p$'', etc.) if $F$ is among the triple of planes chosen at $p$.

\subsection{Priority order and handicaps}
First, assign an arbitrary but fixed order (referred to as the \emph{preassigned order}) to the joints $\cJ$.

A \emph{handicap} $\vec{\alpha} = (\alpha_p)_{p \in \cJ} \in \ZZ^{\cJ}$ assigns an integer to each joint.
Given a handicap, the associated \emph{priority order} is a linear order on $\cJ\times \ZZ_{\ge 0}$ defined by setting $(p,r)\prec (p',r')$ 
\begin{itemize}
    \item if $r-\alpha_p<r'-\alpha_{p'}$, or
    \item if $r-\alpha_p=r'-\alpha_{p'}$ and $p$ comes before $p'$ in the preassigned order on $\cJ$.
\end{itemize}
The priority ordering corresponds to the description in the previous section. 
Note that in particular $(p,0) \prec (p,1) \prec (p,2) \prec \cdots$.
We write $\prec$ for the strict ordering, and $\preceq$ to allow equality.

\subsection{Derivatives and evaluations} \label{sec:deriv-eval}

Let $\RR[x_1, \dots, x_k]_{\le n}$ denote the space of polynomials of degree at most $n$ in $k$ variables.

Given a plane $F$ and a joint $p \in F$, let $\totalD_{p, F}^r$ denote the space of all $r$-th order derivative operators in directions along $F$, i.e., every element $D \in \totalD_{p,F}^r$ gives a linear map $g \mapsto D g$ sending $\RR[x_1 \dots, x_6] \to \RR[x_1, \dots, x_6]$ and $D$ is a linear combination of compositions of $r$ directional derivative operators along $F$. For example, if $F$ is the plane spanned by the first two coordinate directions, then $\totalD_{p, F}^r$ is the space spanned by the operators $\partial^{i+j}/\partial x_1^i \partial x_2^j$ ranging over all $i+j = r$. (The space $\totalD_{p, F}^r$ here does not actually depend on $p$, but we include $p$ in the notation with a view towards generalization from flats to varieties.)

Let $\totalB_{p, F}^r(n)$ denote the subspace of all linear functionals on $\RR[x_1, \dots, x_6]_{\le n}$ of the form $g \mapsto Dg(p)$ for some $D \in \totalD_{p,F}^r$ (i.e., an $r$-th order derivative along $F$ evaluated at $p$).
Then, for a fixed $p \in \cJ\cap F$, a polynomial $g \in \RR[x_1, \dots, x_6]_{\le n}$ lies in the common kernel of $\totalB_{p,F}^0(n) + \totalB_{p,F}^1(n) + \cdots + \totalB_{p,F}^{r-1}(n)$ if and only if the restriction of $g$ to the plane $F$ vanishes to order at least $r$ at $p$. (By common kernel we mean the intersection of the kernels of all linear functionals in this space.)

To emphasize the difference between $\totalB$ and $\totalD$, the elements of $\totalD_{p,F}^r$ are derivative operators sending polynomials to polynomials, whereas the elements of $\totalB_{p,F}^r(n)$ are linear functionals sending polynomials of degree up to $n$ to scalars. 
Perhaps a helpful mnemonic is that $D$ stands for ``differentiation'' while $B$ stands for ``basis'' (we will soon use a basis of the space of linear forms on polynomials up to degree $n$).

For a fixed $F \in \cF$, 
let us describe a process where we go through pairs $(p,r) \in (\cJ \cap F) \times \ZZ_{\ge 0}$ according to the priority order, and at each step we choose a 
\[
\prioB_{p, F}^r(\vec \alpha, n) \subset \totalB_{p, F}^r(n).
\]
We will drop the dependencies on $\vec\alpha$, $n$, and $F$ when there is no confusion, i.e., we write $\prioB_{p}^r \subset \totalB_{p}^r$ for the above inclusion. In addition, all unions and direct sums in the following paragraph are taken over $(p',r')\in(\cJ\cap F)\times \ZZ_{\geq 0}$. 

Suppose we are at the start of step $(p, r)$.
At this point, we have already chosen some $\prioB_{p'}^{r'} \subset \totalB_{p'}^{r'}$ for each $(p',r') \prec (p,r)$ so that the disjoint union $\bigcup_{(p',r') \prec (p,r)} \prioB_{p'}^{r'}$ is a basis for $\sum_{(p',r') \prec (p,r)} \totalB_{p'}^{r'}$.
Now consider expanding this space to $\sum_{(p',r') \preceq (p,r)} \totalB_{p'}^{r'}$ by adding in all the $r$-th order derivative evaluations at $p$ along $F$. We desire to expand the basis accordingly. 
As such, we choose a set $\prioB_{p}^r \subset \totalB_p^r$ so that the disjoint union $\bigcup_{(p',r') \preceq (p,r)} \prioB_{p'}^{r'}$ becomes a basis of $\sum_{(p',r') \preceq (p,r)} \totalB_{p'}^{r'}$.
Note that while we have some choice about which elements of $\totalB_p^r$ to include as new basis elements, the size of $\prioB_{p}^r$ does not depend on any choice, and is only a function $n$ and the priority order. We will provide a more direct formula for $\abs{\prioB_p^r}$ shortly.

Since each element of $\totalB_{p,F}^r(n)$ can be written as $g \mapsto Dg(p)$ for some $D \in \totalD_{p,F}^r$, we can choose
\[
\prioD_{p, F}^r(\vec \alpha, n) \subset \totalD_{p, F}^r
\]
with the same size as $\prioB_{p,F}^r(\vec\alpha,n)$ so that 
\[
\prioB_{p, F}^r(\vec \alpha, n) = \{g \mapsto Dg(p) : D \in \prioD_{p, F}^r(\vec \alpha, n)\}.
\]

We write
\[
\prioB_{p, F}(\vec \alpha, n) := 
\bigcup_{r \ge 0} \prioB_{p, F}^r(\vec \alpha, n)
\quad\text{and}\quad
\prioD_{p, F}(\vec \alpha, n) := \bigcup_{r \ge 0} \prioD_{p,F}^r(\vec \alpha, n).
\]

As we range over all joints $p$ on $F$, the sets $\prioB_{p, F}(\vec \alpha, n)$ combine to form a basis of the space of linear forms on polynomials of degree at most $n$ on $F$.
Thus
\begin{equation}\label{eq:summation}
\sum_{p\in \cJ\cap F}\abs{\prioB_{p, F}(\vec{\alpha},n)}
=
\dim \RR[x,y]_{\le n}
=
\binom{n+2}{2}.	
\end{equation}

We may omit the parenthetical $\vec\alpha$ and $n$ in our notation when these parameters do not change and the context is clear.  Some of the arguments below will involve comparing different values of $\vec\alpha$ and $n$, in which case we will state the dependencies explicitly.
We may also omit $F$ when we are not considering other planes.

\subsection{Polynomials with given vanishing orders} \label{sec:poly-vanish}
In this and the next subsection, we focus our attention on a single fixed plane $F \cong \RR^2$. 
Fix a finite set of points $\cP \subset F$ (which we will later take to be the joints on $F$).
Given a vector $\vec v = (v_p)_{p \in \cP} \in \ZZ_{\ge 0}^{\cP}$, let 
\[
\totalT(\vec v, n)  = \{ g \in \RR[x,y]_{\le n} : g \text{ vanishes to order} \ge v_p \text{ at each } p\in\cP\}
\]
(i.e., the partial derivatives satisfy $\frac{\partial^{i+j}g}{\partial x^i\partial y^j}(p)=0$ for all $i+j<v_p$). We would like to understand how the dimension of $\totalT(\vec v, n)$ changes with $\vec v$ and $n$. We are particularly interested in the following quantity, which we will shortly relate below in \cref{eq:B-T} to $\sabs{\cB_{p,F}^r(\vec\alpha,n)}$: for $p \in \cP$, set
\[
b_p(\vec v, n) := \codim_{\totalT(\vec v, n)} \totalT(\vec v + \vec e_p, n)
= \dim \totalT(\vec v, n) - \dim \totalT(\vec v + \vec e_p, n).
\]
Here, given a pair of subspaces $W \le U$, we write $\codim_U W$ for the relative codimension of $W$ in $U$. Also $\vec e_p \in \ZZ^\cP$ is the vector with $1$ at $p$ and $0$ elsewhere.
Note, for each $p \in \cP$, the space $\totalT(\vec v + \vec e_p, n)$ is the nullspace of the map on $\totalT(\vec v, n)$ that sends every polynomial $g$ to all its $v_p$-th order derivatives evaluated at $p$, and thus $b_p(\vec v, n)$ is the rank of this map.

The following basic fact will be useful:
\begin{equation}
\label{eq:subspace-inequality}
\text{for subspaces }U,W \le V,\text{ we have }\codim_V W \ge \codim_U (W\cap U).
\end{equation}

\begin{lemma}[Bounded domain] \label{lem:Tzero}
If $\vec v \in \ZZ_{\ge 0}^\cP$ has $v_p > n$ for some $p \in \cP$, then $\dim \totalT(\vec v, n) = 0$. 
\end{lemma}

\begin{proof}
This is the statement that no nonzero polynomial of degree at most $n$ can vanish to order more than $n$ at some point.
\end{proof}

\begin{lemma}[Monotonicity] \label{lem:Tmono}
Let $p \in \cP$. Suppose $\vec v^{(1)}, \vec v^{(2)} \in \ZZ_{\ge 0}^\cP$ satisfy $\vec v^{(1)} \ge \vec v^{(2)}$ coordinatewise and with equality at $p$.
Then $b_p(\vec v^{(1)}, n) \le b_p(\vec v^{(2)},n)$ for all $n$.
\end{lemma}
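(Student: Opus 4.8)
The plan is to set up an injection (or a rank inequality) between the two linear maps whose ranks are $b_p(\vec v^{(1)}, n)$ and $b_p(\vec v^{(2)}, n)$, using the basic codimension inequality \eqref{eq:subspace-inequality}. Recall that $b_p(\vec v, n) = \codim_{\totalT(\vec v, n)} \totalT(\vec v + \vec e_p, n)$, so the goal is to show that passing to the larger vector $\vec v^{(1)}$ (which agrees with $\vec v^{(2)}$ at the coordinate $p$) cannot increase this relative codimension. The key structural observation is that $\totalT(\vec v^{(1)}, n) \le \totalT(\vec v^{(2)}, n)$ since requiring higher vanishing orders is more restrictive, and that at the coordinate $p$ the vanishing order being imposed is the same value $v_p := v^{(1)}_p = v^{(2)}_p$ in both cases.

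First I would record the description of $\totalT(\vec v + \vec e_p, n)$ inside $\totalT(\vec v, n)$: it is the common kernel of the finitely many linear functionals on $\totalT(\vec v, n)$ given by $g \mapsto \frac{\partial^{v_p} g}{\partial x^i \partial y^j}(p)$ over all $i + j = v_p$. Since the coordinate at $p$ is the same for $\vec v^{(1)}$ and $\vec v^{(2)}$, these are literally the same functionals on the ambient space $\RR[x,y]_{\le n}$, just restricted to the two different subspaces $\totalT(\vec v^{(1)}, n) \le \totalT(\vec v^{(2)}, n)$. So $b_p(\vec v^{(1)}, n)$ is the dimension of the image of $\totalT(\vec v^{(1)}, n)$ under the evaluation-of-$v_p$-th-derivatives-at-$p$ map, while $b_p(\vec v^{(2)}, n)$ is the dimension of the image of the larger space $\totalT(\vec v^{(2)}, n)$ under the same map. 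The image of a subspace is contained in the image of a larger space, so the former dimension is at most the latter, giving $b_p(\vec v^{(1)}, n) \le b_p(\vec v^{(2)}, n)$.

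Alternatively — and this is perhaps the cleaner way to phrase it given what's already in the paper — I would apply \eqref{eq:subspace-inequality} directly with $V = \totalT(\vec v^{(2)}, n)$, $U = \totalT(\vec v^{(1)}, n)$, and $W = \totalT(\vec v^{(2)} + \vec e_p, n)$. Then $W \cap U = \totalT(\vec v^{(1)} + \vec e_p, n)$ (here one uses $\vec v^{(1)} + \vec e_p \ge \vec v^{(2)} + \vec e_p$ and $\vec v^{(1)} \ge \vec v^{(2)}$, so imposing all of $\vec v^{(1)}$ plus one more at $p$ is the same as intersecting the $\vec v^{(1)}$ conditions with the $(\vec v^{(2)} + \vec e_p)$ conditions), and \eqref{eq:subspace-inequality} yields exactly $\codim_V W \ge \codim_U(W \cap U)$, i.e. $b_p(\vec v^{(2)}, n) \ge b_p(\vec v^{(1)}, n)$.

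I do not anticipate a genuine obstacle here; the only point requiring a little care is the set-theoretic identity $\totalT(\vec v^{(1)}, n) \cap \totalT(\vec v^{(2)} + \vec e_p, n) = \totalT(\vec v^{(1)} + \vec e_p, n)$, which uses the hypothesis that $\vec v^{(1)}$ and $\vec v^{(2)}$ agree at $p$ (so that the "extra" condition $\vec e_p$ added on the $\vec v^{(2)}$ side lines up with the vanishing order $v_p$ already being the common value at $p$) together with $\vec v^{(1)} \ge \vec v^{(2)}$ coordinatewise. Once that identity is in hand, the lemma is immediate from \eqref{eq:subspace-inequality}.
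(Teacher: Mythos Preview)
Your proposal is correct and your first argument is essentially identical to the paper's: both interpret $b_p(\vec v^{(i)},n)$ as the rank of the ``evaluate all $v_p$-th order derivatives at $p$'' map on $\totalT(\vec v^{(i)},n)$, note that $\totalT(\vec v^{(1)},n)\le\totalT(\vec v^{(2)},n)$, and conclude by the rank-of-a-restriction inequality. Your alternative via \eqref{eq:subspace-inequality} is a valid repackaging of the same idea (and the intersection identity you flag is indeed the only point needing the hypothesis $v_p^{(1)}=v_p^{(2)}$).
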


\begin{proof}
Earlier we saw that for each $i=1,2$, $b_p(\vec v^{(i)},n)$ is the rank of the map on $\totalT(\vec v^{(i)}, n)$ that sends each polynomial to all its $v_p^{(1)} = v_p^{(2)}$-th order derivatives evaluated at $p$.
Since $\vec v^{(1)} \ge \vec v^{(2)}$ coordinatewise, $\totalT(\vec v^{(1)}, n)$ is a subspace of $\totalT(\vec v^{(2)}, n)$, which implies the inequality $b_p(\vec v^{(1)}, n) \le b_p(\vec v^{(2)},n)$ on the rank of a map when restricted to a subspace.
\end{proof}

The next two lemmas together will lead to the Lipschitz continuity property of $b_p(\vec v, n)$ as a function of $\vec v$.

\begin{lemma} \label{lem:Tlip1}
Let $p,q\in\cP$ be distinct points. 
Then for every $\vec v \in \ZZ_{\ge 0}^\cP$ and nonnegative integer $n$, one has
$b_p(\vec v + \vec e_q, n) \ge b_p(\vec v, n-1)$.
\end{lemma}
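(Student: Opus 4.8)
The plan is to produce, from a polynomial witnessing that $b_p(\vec v, n-1)$ is large, a corresponding family of polynomials of degree at most $n$ witnessing that $b_p(\vec v + \vec e_q, n)$ is at least as large. Recall that $b_p(\vec v, n-1)$ is the rank of the evaluation-of-$v_p$-th-order-derivatives-at-$p$ map on $\totalT(\vec v, n-1)$; equivalently, it equals the dimension of the space $L$ spanned by the linear functionals $g \mapsto Dg(p)$, $D \in \totalD_{p,F}^{v_p}$, restricted to $\totalT(\vec v, n-1)$. Choose a linear change of affine coordinates on $F \cong \RR^2$ so that $q$ becomes the origin; let $\ell$ be a linear form on $F$ vanishing at $q$ but not at $p$ (e.g.\ a coordinate function after translating $q$ to the origin, chosen so that $\ell(p)\neq 0$). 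The key observation is that multiplication by $\ell$ maps $\totalT(\vec v, n-1)$ into $\totalT(\vec v + \vec e_q, n)$: multiplying by $\ell$ raises degree by $1$, preserves vanishing to order $\ge v_{p'}$ at every $p' \neq q$ (since $\ell$ is nonvanishing there, vanishing order is unchanged), and raises the vanishing order at $q$ from $\ge v_q$ to $\ge v_q + 1$ (since $\ell$ vanishes to order exactly $1$ at $q$). Moreover $g \mapsto \ell g$ is injective on polynomials.

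The second step is to check that this map does not collapse the relevant quotient. Concretely, consider the map $\Phi \colon \totalT(\vec v, n-1) \to \totalT(\vec v + \vec e_q, n)$, $g \mapsto \ell g$, and compose with the projection $\pi$ onto $\totalT(\vec v + \vec e_q, n) / \totalT(\vec v + \vec e_q + \vec e_p, n)$. I claim $\pi \circ \Phi$ has rank at least $b_p(\vec v, n-1)$. Indeed, the value of $\pi(\ell g)$ is recorded by the $v_p$-th order derivatives of $\ell g$ at $p$; since $\ell(p) \neq 0$, Leibniz's rule (or the Hasse-derivative analogue for general fields, though here we are over $\RR$) shows that the leading contribution is $\ell(p)$ times the corresponding $v_p$-th order derivative of $g$ at $p$, plus lower-order derivative terms of $g$ at $p$. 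So if $g \in \totalT(\vec v, n-1)$ has all derivatives of order $< v_p$ vanishing at $p$ — which holds by definition of $\totalT(\vec v, n-1)$ — then the $v_p$-th order derivatives of $\ell g$ at $p$ are exactly $\ell(p)$ times those of $g$. Hence the image of $\pi \circ \Phi$ has dimension equal to the rank of the derivative-evaluation map on $\totalT(\vec v, n-1)$, which is $b_p(\vec v, n-1)$. Since $\operatorname{rank}(\pi\circ\Phi) \le \dim \totalT(\vec v + \vec e_q, n) - \dim \totalT(\vec v + \vec e_q + \vec e_p, n) = b_p(\vec v + \vec e_q, n)$, the desired inequality follows.

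The main obstacle is the bookkeeping in the Leibniz expansion: one must verify that multiplying by $\ell$ is ``triangular'' with respect to the filtration of derivative-evaluations at $p$ by order, so that it acts invertibly on the top graded piece $\totalD_{p,F}^{v_p}$ modulo lower orders — and crucially that the lower-order terms genuinely vanish on $\totalT(\vec v, n-1)$, which is exactly where the hypothesis $g \in \totalT(\vec v, n-1)$ (vanishing to order $\ge v_p$ at $p$) is used. Everything else is formal: the degree count $(n-1)+1 = n$, the injectivity of multiplication by $\ell$, and the codimension inequality \eqref{eq:subspace-inequality} applied to the subspace $\ell\cdot\totalT(\vec v,n-1) \le \totalT(\vec v+\vec e_q,n)$. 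One subtlety worth flagging: I should make sure $\ell$ can be chosen with $\ell(p) \ne 0$, which is possible precisely because $p \ne q$, so the line of linear forms vanishing at $q$ is not forced to vanish at $p$.
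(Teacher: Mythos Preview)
Your proof is correct and follows essentially the same approach as the paper: multiply by a linear form $\ell$ vanishing at $q$ but not at $p$ to embed $\totalT(\vec v, n-1)$ into $\totalT(\vec v + \vec e_q, n)$, then observe that this embedding respects the filtration by vanishing order at $p$. The paper packages this more concisely by directly invoking \cref{eq:subspace-inequality} with $U = \ell\cdot\totalT(\vec v,n-1)$ and noting $U \cap \totalT(\vec v + \vec e_p + \vec e_q, n) = \ell\cdot\totalT(\vec v + \vec e_p, n-1)$, rather than unwinding the Leibniz rule; your parenthetical ``since $\ell$ is nonvanishing there'' is slightly imprecise (you only arranged $\ell(p)\ne 0$), but the inclusion $\ell\cdot\totalT(\vec v,n-1)\subseteq\totalT(\vec v+\vec e_q,n)$ holds regardless since multiplication can only raise vanishing order.
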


\begin{proof}
Let $f$ be an arbitrary linear polynomial that vanishes at $q$ but at no other point of $\cP$ (such $f$ clearly exists if the underlying field $\FF$ is large enough; if not, we replace $\FF$ by a field extension, which would not affect $b_p(\vec v, n)$ as it is a rank-type quantity). 
We have
\begin{align*}
b_p(\vec v + \vec e_q, n) 
&= \codim_{\totalT(\vec v + \vec e_q, n)} \totalT(\vec v + \vec e_p +  \vec e_q, n)
\\
&\ge \codim_{f \cdot \totalT(\vec v , n-1)} f \cdot \totalT(\vec v + \vec e_p, n-1)
\\
&= \codim_{\totalT(\vec v , n-1)} \totalT(\vec v + \vec e_p, n-1)
\\
&= b_p(\vec v, n-1). 
\end{align*}
The inequality step follows from \cref{eq:subspace-inequality}, observing that restricting $\totalT(\vec v + \vec e_q, n)$ and $\totalT(\vec v + \vec e_p +  \vec e_q, n)$ to polynomials divisible by $f$ yields $f \cdot \totalT(\vec v , n-1)$ and $f \cdot \totalT(\vec v + \vec e_p, n-1)$ respectively.
\end{proof}

\begin{lemma} \label{lem:Tlip2}
Let $p \in \cP$. Suppose $\vec v^{(0)}, \vec v^{(1)}, \dots \in \ZZ^\cP$ are such that $\vec v^{(0)} \le  \vec v^{(1)} \le \cdots$ coordinate-wise and strictly increasing at the coordinate indexed by $p$.
Then
\[
\sum_{r \ge 0} b_p(\vec v^{(r)}, n) - \sum_{r\ge 0} b_p(\vec v^{(r)}, n-1)  \le n+1.
\]
\end{lemma}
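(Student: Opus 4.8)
The plan is to prove the inequality by relating the two sums to dimensions of spaces $\totalT(\vec v^{(r)}, n)$ and $\totalT(\vec v^{(r)}, n-1)$ via a telescoping argument. Recall that $b_p(\vec v, n) = \dim\totalT(\vec v, n) - \dim\totalT(\vec v + \vec e_p, n)$. Since the sequence $\vec v^{(0)} \le \vec v^{(1)} \le \cdots$ is strictly increasing at $p$, by \cref{lem:Tzero} the spaces $\totalT(\vec v^{(r)}, n)$ eventually become zero (once the $p$-coordinate exceeds $n$), so only finitely many terms in each sum are nonzero and the sums are well-defined. The first observation I would make is that $\vec v^{(r+1)} \ge \vec v^{(r)} + \vec e_p$ coordinatewise (strict increase at $p$ means the $p$-coordinate goes up by at least $1$), so $\totalT(\vec v^{(r+1)}, n) \le \totalT(\vec v^{(r)} + \vec e_p, n) \le \totalT(\vec v^{(r)}, n)$, and therefore
\[
b_p(\vec v^{(r)}, n) = \dim\totalT(\vec v^{(r)}, n) - \dim\totalT(\vec v^{(r)} + \vec e_p, n) \le \dim\totalT(\vec v^{(r)}, n) - \dim\totalT(\vec v^{(r+1)}, n).
\]
Summing over $r \ge 0$ telescopes to give $\sum_{r \ge 0} b_p(\vec v^{(r)}, n) \le \dim\totalT(\vec v^{(0)}, n)$. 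Unfortunately this bound is too lossy on its own, since the intermediate steps $\totalT(\vec v^{(r)} + \vec e_p, n)$ vs.\ $\totalT(\vec v^{(r+1)}, n)$ are dropped, so I need to be more careful and keep exact differences.

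**Rewriting the sum exactly.** The cleaner approach is to write the sum $\sum_{r\ge 0} b_p(\vec v^{(r)}, n)$ as a single difference plus a correction. Set $m_r$ to be the $p$-coordinate of $\vec v^{(r)}$, so $m_0 < m_1 < \cdots$. For a fixed $n$, consider the full chain of spaces obtained by incrementing the $p$-coordinate one at a time. Actually, the key identity I want is: for each $r$, $b_p(\vec v^{(r)}, n)$ counts the rank of the $m_r$-th order derivative evaluation map at $p$ restricted to $\totalT(\vec v^{(r)}, n)$. Rather than tracking this directly, I would instead argue: Define $D_r := \dim\totalT(\vec v^{(r)}, n) - \dim\totalT(\vec v^{(r)}, n-1)$, the "degree-$n$ piece" of $\totalT(\vec v^{(r)}, n)$. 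The claim is equivalent to $\sum_{r\ge 0}\paren{b_p(\vec v^{(r)}, n) - b_p(\vec v^{(r)}, n-1)} \le n+1$. Now
\[
\sum_{r\ge 0}\paren{b_p(\vec v^{(r)}, n) - b_p(\vec v^{(r)}, n-1)} = \sum_{r\ge 0}\paren{D_r - D_r'}
\]
where $D_r = \dim\totalT(\vec v^{(r)}, n) - \dim\totalT(\vec v^{(r)}+\vec e_p, n)$ and $D_r' $ is the analogous quantity for $n-1$; by the definition of $b_p$ and rearranging, $b_p(\vec v^{(r)},n) - b_p(\vec v^{(r)},n-1) = \bigl(\dim\totalT(\vec v^{(r)},n) - \dim\totalT(\vec v^{(r)},n-1)\bigr) - \bigl(\dim\totalT(\vec v^{(r)}+\vec e_p,n) - \dim\totalT(\vec v^{(r)}+\vec e_p,n-1)\bigr)$. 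Writing $\delta(\vec w) := \dim\totalT(\vec w, n) - \dim\totalT(\vec w, n-1)$, this is $\delta(\vec v^{(r)}) - \delta(\vec v^{(r)} + \vec e_p)$. Since $\vec v^{(r+1)} \ge \vec v^{(r)} + \vec e_p$ with both in the chain, and since $\delta$ is monotone nonincreasing along increasing $\vec w$ (because $\totalT(\vec w, n-1) \le \totalT(\vec w, n)$ and both shrink as $\vec w$ grows — this needs \cref{eq:subspace-inequality} to control the relative codimensions), we would get $\delta(\vec v^{(r)} + \vec e_p) \ge \delta(\vec v^{(r+1)})$, hence $\delta(\vec v^{(r)}) - \delta(\vec v^{(r)}+\vec e_p) \le \delta(\vec v^{(r)}) - \delta(\vec v^{(r+1)})$, and the sum telescopes to at most $\delta(\vec v^{(0)}) - \lim_r \delta(\vec v^{(r)}) = \delta(\vec v^{(0)}) \le \dim\totalT(\vec v^{(0)}, n) - \dim\totalT(\vec v^{(0)}, n-1)$.

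**Bounding $\delta(\vec v^{(0)})$ by $n+1$.** It remains to show $\delta(\vec w) \le n+1$ for any $\vec w \in \ZZ_{\ge 0}^\cP$, i.e., that the degree-exactly-$n$ homogeneous part contributes at most $n+1$ to the dimension. This is true because $\totalT(\vec w, n)/\totalT(\vec w, n-1)$ injects into $\RR[x,y]_{\le n}/\RR[x,y]_{\le n-1}$, the space of degree-$n$ homogeneous polynomials in two variables, which has dimension exactly $n+1$. (Concretely: if $g, g' \in \totalT(\vec w, n)$ have the same degree-$n$ homogeneous part, then $g - g' \in \RR[x,y]_{\le n-1}$ and still satisfies all the vanishing conditions, so $g - g' \in \totalT(\vec w, n-1)$; thus the map $\totalT(\vec w,n) \to (\text{homog.\ degree } n)$ taking top-degree part has kernel exactly $\totalT(\vec w, n-1)$.) Combining, $\sum_{r\ge 0}\paren{b_p(\vec v^{(r)}, n) - b_p(\vec v^{(r)}, n-1)} \le \delta(\vec v^{(0)}) \le n+1$, which is the desired inequality.

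**Main obstacle.** The step I expect to require the most care is verifying that $\delta(\vec w) = \dim\totalT(\vec w, n) - \dim\totalT(\vec w, n-1)$ is monotone nonincreasing as $\vec w$ increases coordinatewise — equivalently, that $\dim\totalT(\vec w + \vec e_q, n) - \dim\totalT(\vec w + \vec e_q, n-1) \le \dim\totalT(\vec w, n) - \dim\totalT(\vec w, n-1)$. This is precisely the content that should follow from \cref{lem:Tlip1} (which gives $b_p(\vec v + \vec e_q, n) \ge b_p(\vec v, n-1)$) summed appropriately, or directly from \cref{eq:subspace-inequality} applied to the pair $\totalT(\vec w, n-1) \le \totalT(\vec w, n)$ intersected against the subspace cutting down the $q$-coordinate. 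I would double-check that the telescoping is legitimate, i.e., that all sums involved are finite (guaranteed by \cref{lem:Tzero}) and that the inequality $\delta(\vec v^{(r)} + \vec e_p) \ge \delta(\vec v^{(r+1)})$ is valid even when $\vec v^{(r+1)}$ exceeds $\vec v^{(r)} + \vec e_p$ in coordinates other than $p$ — which again reduces to the monotonicity of $\delta$.
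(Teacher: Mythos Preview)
Your proof is correct and essentially the same as the paper's. Your quantity $\delta(\vec w) = \dim\totalT(\vec w,n) - \dim\totalT(\vec w,n-1)$ is precisely $\codim_{\totalT(\vec w,n)}\totalT(\vec w,n-1)$, and the paper carries out exactly the same rearrangement $b_p(\vec v^{(r)},n)-b_p(\vec v^{(r)},n-1)=\delta(\vec v^{(r)})-\delta(\vec v^{(r)}+\vec e_p)$, the same monotonicity $\delta(\vec v^{(r)}+\vec e_p)\ge\delta(\vec v^{(r+1)})$ via \cref{eq:subspace-inequality}, and the same telescoping to $\delta(\vec v^{(0)})\le\delta(\vec 0)=n+1$. (Minor note: in your ``Main obstacle'' paragraph, the monotonicity of $\delta$ does not come from \cref{lem:Tlip1} but, as you correctly also say, directly from \cref{eq:subspace-inequality}.)
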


\begin{proof}
For each $r \ge 0$, we have
\begin{align*}
b_p(\vec v^{(r)}, n) - b_p(\vec v^{(r)}, n-1)
&= \codim_{\totalT(\vec v^{(r)}, n)} \totalT(\vec v^{(r)} + \vec e_p, n) - \codim_{\totalT(\vec v^{(r)}, n-1)} \totalT(\vec v^{(r)} + \vec e_p, n-1)
\\
&= \codim_{\totalT(\vec v^{(r)}, n)} \totalT(\vec v^{(r)}, n-1) - \codim_{\totalT(\vec v^{(r)} + \vec e_p, n)} \totalT(\vec v^{(r)} + \vec e_p, n-1).
\end{align*}
We have $\codim_{\totalT(\vec v^{(r)} + \vec e_p, n)} \totalT(\vec v^{(r)} + \vec e_p, n-1) \ge \codim_{\totalT(\vec v^{(r+1)} , n)} \totalT(\vec v^{(r+1)}, n-1)$ by \cref{eq:subspace-inequality} since $\vec v^{(r)} + \vec e_p \le \vec v^{(r+1)}$ coordinatewise. Summing over all $r \ge 1$, we obtain
\begin{align*}
\sum_{r \ge 0} b_p(\vec v^{(r)}, n) - \sum_{r \ge 0} b_p(\vec v^{(r)}, n-1)  
&\le \codim_{\totalT(\vec v^{(0)}, n)} \totalT(\vec v^{(0)}, n-1) 
\\
&\le \codim_{\totalT(\vec 0, n)} \totalT(\vec 0, n-1) \\
&= n+1. \qedhere
\end{align*}
\end{proof}

\begin{lemma}[Lipschitz continuity] \label{lem:Tlip}
Let $p,q\in\cP$ be distinct points. Suppose $\vec v^{(0)}, \vec v^{(1)}, \dots \in \ZZ^\cP$ are such that $\vec v^{(0)} \le  \vec v^{(1)} \le \cdots$ coordinate-wise and strictly increasing at the coordinate indexed by $p$. Then
\[
0\le \sum_{r \ge 0} b_p(\vec v^{(r)}, n)
- \sum_{r \ge 0} b_p(\vec v^{(r)} + \vec e_q, n) 
\le n+1.
\]
\end{lemma}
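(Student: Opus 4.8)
The plan is to obtain both inequalities by assembling the three preceding lemmas termwise in $r$, after first noting that all the sums involved are in fact finite. Since $\vec v^{(0)} \le \vec v^{(1)} \le \cdots$ is strictly increasing at the coordinate indexed by $p$, we have $v_p^{(r)} > n$ for all sufficiently large $r$, so $\totalT(\vec v^{(r)}, n) = 0$ by uniform boundedness (\cref{lem:Tzero}) and hence $b_p(\vec v^{(r)}, n) = 0$ and likewise $b_p(\vec v^{(r)} + \vec e_q, n) = 0$ for those $r$. Thus each sum appearing in the statement has only finitely many nonzero terms, and manipulating them termwise is legitimate.

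For the lower bound, I would apply \cref{lem:Tmono} with $\vec v^{(1)} = \vec v^{(r)} + \vec e_q$ and $\vec v^{(2)} = \vec v^{(r)}$: these agree at the coordinate $p$ (because $p \ne q$) and satisfy $\vec v^{(r)} + \vec e_q \ge \vec v^{(r)}$ coordinatewise, so $b_p(\vec v^{(r)} + \vec e_q, n) \le b_p(\vec v^{(r)}, n)$. Summing over $r \ge 0$ gives $\sum_{r\ge 0} b_p(\vec v^{(r)} + \vec e_q, n) \le \sum_{r\ge 0} b_p(\vec v^{(r)}, n)$, which is exactly the left inequality.

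For the upper bound, I would invoke \cref{lem:Tlip1} with $\vec v = \vec v^{(r)}$, which yields $b_p(\vec v^{(r)} + \vec e_q, n) \ge b_p(\vec v^{(r)}, n-1)$ for each $r$; summing over $r \ge 0$ gives $\sum_{r \ge 0} b_p(\vec v^{(r)} + \vec e_q, n) \ge \sum_{r \ge 0} b_p(\vec v^{(r)}, n-1)$. Combining this with \cref{lem:Tlip2} (which applies verbatim, since the hypotheses on $(\vec v^{(r)})$ are the same), we get
\[
\sum_{r \ge 0} b_p(\vec v^{(r)}, n) - \sum_{r \ge 0} b_p(\vec v^{(r)} + \vec e_q, n)
\le \sum_{r \ge 0} b_p(\vec v^{(r)}, n) - \sum_{r \ge 0} b_p(\vec v^{(r)}, n-1)
\le n+1,
\]
which is the right inequality.

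\textbf{Main obstacle.} There is essentially no substantive obstacle here: the lemma is a bookkeeping corollary of \cref{lem:Tmono}, \cref{lem:Tlip1}, and \cref{lem:Tlip2}, and the only point requiring a word of care is the finiteness/convergence of the infinite sums, which is handled by the uniform boundedness lemma as above. The genuine work was already done in establishing \cref{lem:Tlip1} and \cref{lem:Tlip2} via the subspace-codimension inequality \cref{eq:subspace-inequality} and the multiplication-by-$f$ trick.
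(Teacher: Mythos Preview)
Your proposal is correct and follows essentially the same approach as the paper, whose entire proof reads ``Combine \cref{lem:Tlip1,lem:Tlip2}.'' You have simply spelled out the combination: \cref{lem:Tlip1} summed over $r$ feeds into \cref{lem:Tlip2} for the upper bound, while you additionally invoke \cref{lem:Tmono} for the lower bound and \cref{lem:Tzero} for the finiteness of the sums, both of which the paper leaves implicit.
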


\begin{proof}
Combine \cref{lem:Tmono,lem:Tlip1,lem:Tlip2}.
\end{proof}

\subsection{How the number of vanishing conditions varies with the handicap} \label{sec:vanish-handicap}
As in the previous subsection, let us continue to focus our attention on a set of points $\cP$ on a fixed plane $F\cong\RR^2$ (which we will drop from our notation temporarily).

Given a handicap $\vec\alpha \in \ZZ^\cP$ (restricted to this plane),  we define the vector $\vec{v}^{p,r}(\vec{\alpha})$ as follows. 
It assigns to coordinate $p' \in \cP$ the smallest nonnegative integer $r'$ such that $(p,r) \preceq(p',r')$. Equivalently, the value of $\vec{v}^{p,r}(\vec{\alpha})$ at $p'$ is given by
\begin{equation} \label{eq:v-alpha}
v^{p,r}_{p'}(\vec{\alpha}) 
= \begin{cases}
\max\{r - \alpha_p + \alpha_{p'} + 1,0\} & \text{if $p'$ comes strictly before $p$ in the preassigned order,}
\\
\max\{r - \alpha_p + \alpha_{p'} ,0\} & \text{otherwise.}
\end{cases}
\end{equation}
In other words, $\vec v^{p,r}(\vec \alpha)$ collects the desired vanishing orders at each joint on $F$ at the stage right before we hit $(p,r)$ in the priority order. 

Define $\prioB_p^r(\vec\alpha,n)$ and $\prioB_p(\vec\alpha,n)$ as in \cref{sec:deriv-eval} restricted to this plane.
Recall that for every $(p,r) \in \cP \times \ZZ_{\ge 0}$,
the disjoint union $\bigcup_{(p',r') \prec (p,r)} \prioB_{p'}^{r'}$ is basis of $\sum_{(p',r') \prec (p,r)} \totalB_{p'}^{r'}$. 
Then a polynomial $g \in \RR[x_1, \dots, x_6]_{\le n}$ lies in the common kernel of $\bigcup_{(p',r') \prec (p,r)} \prioB_{p'}^{r'}$ if and only if the restriction of $g$ to the plane $F$ vanishes to order at least $v_q^{p,r}(\vec\alpha)$ for every $q \in \cP$. Since adding $\prioB_p^r$ makes this set a basis for $\sum_{(p',r') \preceq (p,r)} \totalB_{p'}^{r'}$, its size is the number of non-redundant constraints that we need to add to increase the order of vanishing at $p$ by $1$. Thus
\begin{equation}\label{eq:B-T}
\abs{\prioB_p^r(\vec \alpha, n)}
= b_p(\vec v, n)
= \codim_{\totalT(\vec v, n)} \totalT(\vec v + \vec e_p, n)
\quad \text{ with } 
\vec v = \vec v^{p,r}(\vec \alpha).
\end{equation}

The observations in the previous section then imply the following.

\begin{lemma}[Bounded domain] \label{lem:zero}
Let $n \ge 0$ and $\vec\alpha \in \ZZ^\cP$. Let $p,q \in \cP$.
If $\alpha_p < \alpha_{q}-n$,
then $\abs{\prioB_p(\vec\alpha, n)}=0$. 
\end{lemma}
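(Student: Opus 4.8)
The plan is to reduce the claim to \cref{lem:Tzero} (uniform boundedness at the level of $\totalT$) via the formula \cref{eq:B-T} relating $\sabs{\prioB_p^r(\vec\alpha,n)}$ to $b_p(\vec v^{p,r}(\vec\alpha),n)$, together with monotonicity \cref{lem:Tmono}. The point is that the hypothesis $\alpha_p<\alpha_q-n$ forces, at every stage $(p,r)$ of the priority process with $r \le n$, the desired vanishing order $v^{p,r}_q(\vec\alpha)$ at the competing point $q$ to already exceed $n$, so by \cref{lem:Tzero} the space $\totalT(\vec v^{p,r}(\vec\alpha), n)$ is already zero, hence its codimension-type quantity $b_p$ vanishes; and for $r > n$ the relevant space $\totalT$ is zero directly because $v^{p,r}_p(\vec\alpha) > n$.

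In more detail, I would argue as follows. Fix $n$, $\vec\alpha$, and distinct $p,q$ with $\alpha_p<\alpha_q-n$. We must show $\sabs{\prioB_p^r(\vec\alpha,n)}=0$ for every $r\ge 0$; by \cref{eq:B-T} this is the same as $b_p(\vec v^{p,r}(\vec\alpha),n)=0$. Recall $b_p(\vec v,n)=\dim\totalT(\vec v,n)-\dim\totalT(\vec v+\vec e_p,n)$, so it suffices to show $\dim\totalT(\vec v^{p,r}(\vec\alpha),n)=0$. First suppose $r\le n$. By \cref{eq:v-alpha}, $v^{p,r}_q(\vec\alpha) \ge r-\alpha_p+\alpha_q \ge -\alpha_p+\alpha_q > n$ (using $r\ge 0$ and $\alpha_q-\alpha_p>n$); since the $q$-coordinate of $\vec v^{p,r}(\vec\alpha)$ exceeds $n$, \cref{lem:Tzero} gives $\dim\totalT(\vec v^{p,r}(\vec\alpha),n)=0$. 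Now suppose $r>n$. Then $v^{p,r}_p(\vec\alpha) \ge r > n$ (the $p$-coordinate of $\vec v^{p,r}(\vec\alpha)$ is at least $r$, since $v^{p,r}_p(\vec\alpha) = \max\{r,0\} = r$ by the second case of \cref{eq:v-alpha} applied with $p'=p$), so again \cref{lem:Tzero} applies. In both cases $b_p(\vec v^{p,r}(\vec\alpha),n)=0$, hence $\sabs{\prioB_p(\vec\alpha,n)} = \sum_{r\ge 0}\sabs{\prioB_p^r(\vec\alpha,n)} = 0$.

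I do not expect a serious obstacle here: the content is entirely bookkeeping with the definition \cref{eq:v-alpha} of $\vec v^{p,r}(\vec\alpha)$ and a direct appeal to \cref{lem:Tzero}. The only mild subtlety is keeping track of the $\pm 1$ discrepancy in \cref{eq:v-alpha} coming from the tie-breaking by preassigned order — but since our bound $\alpha_q-\alpha_p>n$ is strict, the $+1$ term only helps and the inequality $v^{p,r}_q(\vec\alpha)>n$ holds regardless of the relative preassigned order of $p$ and $q$. (One should also double-check the edge convention that $r$ ranges over $\ZZ_{\ge 0}$ so that the case split $r\le n$ versus $r>n$ is exhaustive, which it is.)
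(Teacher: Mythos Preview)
Your proof is correct and follows the same route as the paper: reduce via \cref{eq:B-T} to $b_p(\vec v^{p,r}(\vec\alpha),n)=0$, then use \cref{eq:v-alpha} to see the $q$-coordinate of $\vec v^{p,r}(\vec\alpha)$ exceeds $n$, and conclude $\dim\totalT=0$ from \cref{lem:Tzero}. Your case split $r\le n$ versus $r>n$ is harmless but unnecessary---the inequality $v^{p,r}_q(\vec\alpha)\ge r-\alpha_p+\alpha_q>n$ from your first case already holds for \emph{all} $r\ge 0$ (you only used $r\ge 0$, not $r\le n$), so the second case and the mention of \cref{lem:Tmono} can be dropped.
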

\begin{proof}
For each $r \ge 0$, the value of $\vec v = \vec v^{p,r}(\vec\alpha)$ at $q$ is greater than $n$, so $\dim \totalT(\vec v, n) = 0$ by \cref{lem:Tzero}. 
Hence $\abs{\prioB_p^r(\vec \alpha, n)} = b_p(\vec v, n) = 0$.
\end{proof}

\begin{lemma}[Monotonicity] \label{lem:mono}
Let $n$ be a positive integer and $\vec\alpha^{(1)},\vec\alpha^{(2)} \in \ZZ^\cP$ be two handicaps. 
Suppose $p \in \cP$ satisfies 
$\alpha^{(1)}_p-\alpha^{(1)}_{p'}\le \alpha^{(2)}_p-\alpha^{(2)}_{p'}$ for all $p' \in \cP$.
Then $\abs{\prioB_p(\vec\alpha^{(1)},n)} \le \abs{\prioB_p(\vec\alpha^{(2)},n)}$.
\end{lemma}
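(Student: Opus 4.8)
The plan is to reduce the statement about $\abs{\prioB_p(\vec\alpha^{(1)},n)}$ versus $\abs{\prioB_p(\vec\alpha^{(2)},n)}$ to the already-established \cref{lem:Tmono}. Recall from \cref{eq:B-T} that for each $r \ge 0$ we have $\abs{\prioB_p^r(\vec\alpha,n)} = b_p(\vec v^{p,r}(\vec\alpha), n)$, and that $\abs{\prioB_p(\vec\alpha,n)} = \sum_{r\ge 0} \abs{\prioB_p^r(\vec\alpha,n)}$. So it suffices to compare the two sums $\sum_{r\ge 0} b_p(\vec v^{p,r}(\vec\alpha^{(1)}),n)$ and $\sum_{r\ge 0} b_p(\vec v^{p,r}(\vec\alpha^{(2)}),n)$ term by term, but one must be a little careful because the indexing by $r$ need not line up: the ``same'' $r$ produces different vectors $\vec v^{p,r}$ under the two handicaps. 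The natural fix is to reindex both sums by the value of $r - \alpha_p$ (equivalently, by the position $(p,r)$ in the priority order relative to $p$'s own handicap), which is the quantity that actually governs $\vec v^{p,r}$; after this reindexing both sums run over the same index set (the integers, with only nonnegative contributions), and I will show that the $\vec\alpha^{(1)}$-vector dominates the $\vec\alpha^{(2)}$-vector at each index.

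Concretely, fix an integer $j$ (thought of as $r - \alpha_p$) and consider the vectors $\vec v^{(1)} := \vec v^{p, \alpha^{(1)}_p + j}(\vec\alpha^{(1)})$ and $\vec v^{(2)} := \vec v^{p, \alpha^{(2)}_p + j}(\vec\alpha^{(2)})$, whenever $\alpha^{(i)}_p + j \ge 0$ so that these are legitimate (when $\alpha^{(i)}_p+j<0$ there is no term, equivalently the term is the vacuous $r$ below $0$; one checks the contributions still match up because $b_p$ is eventually $0$ in $r$ anyway, or simply extend the sums over all $r \in \ZZ$ with the convention that $b_p = 0$ for $r<0$). From the explicit formula \cref{eq:v-alpha}, the $p'$-coordinate of $\vec v^{p,r}(\vec\alpha)$ equals $\max\{r - \alpha_p + \alpha_{p'} + \epsilon_{p'}, 0\}$ where $\epsilon_{p'} \in \{0,1\}$ depends only on the preassigned order (and not on $\vec\alpha$); substituting $r = \alpha^{(i)}_p + j$ gives $v^{(i)}_{p'} = \max\{j + \alpha^{(i)}_{p'} - (\alpha^{(i)}_p - \alpha^{(i)}_p)\cdot 0 \dots\}$ — more cleanly, $v^{(i)}_{p'} = \max\{j - (\alpha^{(i)}_p - \alpha^{(i)}_{p'}) + \epsilon_{p'}, 0\}$. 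The hypothesis $\alpha^{(1)}_p - \alpha^{(1)}_{p'} \le \alpha^{(2)}_p - \alpha^{(2)}_{p'}$ for all $p'$ therefore gives $v^{(1)}_{p'} \ge v^{(2)}_{p'}$ for all $p' \in \cP$, and at $p' = p$ both sides equal $\max\{j + \epsilon_p, 0\}$ (here $\epsilon_p = 0$ since $p$ does not come strictly before itself), so we have equality at the $p$-coordinate. This is exactly the hypothesis of \cref{lem:Tmono}, which yields $b_p(\vec v^{(1)}, n) \le b_p(\vec v^{(2)}, n)$.

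Summing this inequality over all $j \in \ZZ$ (with the zero-convention for negative arguments) and using \cref{eq:B-T} gives
\[
\abs{\prioB_p(\vec\alpha^{(1)},n)} = \sum_{j} b_p(\vec v^{(1)}, n) \le \sum_{j} b_p(\vec v^{(2)}, n) = \abs{\prioB_p(\vec\alpha^{(2)},n)},
\]
as desired; both sums are finite because $b_p(\vec v, n) = 0$ once any coordinate of $\vec v$ exceeds $n$, by \cref{lem:Tzero}. The main thing to get right — the only real obstacle — is the bookkeeping of the reindexing: one must confirm that translating $r \mapsto \alpha^{(i)}_p + j$ really does make the index sets of the two sums coincide and that the $\epsilon_{p'}$ correction terms from \cref{eq:v-alpha} are handicap-independent, so that the comparison at each fixed $j$ is genuinely an instance of \cref{lem:Tmono}. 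Once that alignment is in place the proof is immediate.
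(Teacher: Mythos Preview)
Your overall strategy---reduce to \cref{lem:Tmono} via \cref{eq:B-T} and sum over $r$---is exactly the paper's approach. However, the reindexing you introduce is both unnecessary and, as written, incorrect.

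You claim the indexing by $r$ ``need not line up'' and therefore reindex by $j = r - \alpha_p^{(i)}$. But substituting $r = \alpha_p^{(i)} + j$ into \cref{eq:v-alpha} actually gives
\[
v^{(i)}_{p'} \;=\; \max\{j + \alpha^{(i)}_{p'} + \epsilon_{p'},\,0\},
\]
not the expression $\max\{j - (\alpha^{(i)}_p - \alpha^{(i)}_{p'}) + \epsilon_{p'},\,0\}$ that you wrote. With the correct formula the $p$-coordinate becomes $v^{(i)}_p = \max\{j + \alpha^{(i)}_p, 0\}$, which depends on $i$, so the equality-at-$p$ hypothesis of \cref{lem:Tmono} fails and the termwise comparison breaks down. (Your formula is in fact correct if one simply takes $j = r$, i.e.\ no reindexing at all---which suggests you slipped between two conventions mid-argument.)

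The fix is to drop the reindexing entirely. For each fixed $r \ge 0$, set $\vec v^{(i)} = \vec v^{p,r}(\vec\alpha^{(i)})$. Then directly from \cref{eq:v-alpha} one has $v^{(i)}_p = r$ for both $i$, and $v^{(i)}_{p'} = \max\{r - (\alpha^{(i)}_p - \alpha^{(i)}_{p'}) + \epsilon_{p'},\,0\}$, so the hypothesis $\alpha^{(1)}_p - \alpha^{(1)}_{p'} \le \alpha^{(2)}_p - \alpha^{(2)}_{p'}$ gives $\vec v^{(1)} \ge \vec v^{(2)}$ coordinatewise with equality at $p$. Now \cref{lem:Tmono} applies at every $r \ge 0$, and summing via \cref{eq:B-T} gives the result. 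This is precisely the paper's three-line proof; there is no alignment issue to repair.
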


\begin{proof}
For each $i=1,2$, let $\vec v^{(i)} = \vec v^{(p,r)}(\vec \alpha^{(i)})$.
From \cref{eq:v-alpha} we see that $\vec v^{(1)} \ge \vec v^{(2)}$ coordinatewise and with equality at $p$. Then \cref{lem:Tmono} gives $b_p(\vec v^{(1)}, n) \le b_p(\vec v^{(2)},n)$, and \cref{eq:B-T} gives the claim.
\end{proof}

\begin{lemma}[Lipschitz continuity]\label{lem:lip}
Let $p \in \cP$ and $\vec\alpha^{(1)},\vec\alpha^{(2)} \in \ZZ^\cP$.
Then
\[
\abs{\sabs{\prioB_{p}(\vec\alpha^{(1)},n)}-\sabs{\prioB_{p}(\vec\alpha^{(2)},n)}}
\le 
(n+1)\sum_{p'\in \cP }\abs{(\alpha^{(1)}_{p'}-\alpha^{(1)}_{p})-(\alpha^{(2)}_{p'}-\alpha^{(2)}_{p})}.
\]
\end{lemma}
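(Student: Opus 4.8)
The plan is to reduce the statement to the continuity estimate already established at the level of the vanishing-order vectors, namely \cref{lem:Tlip} (together with \cref{lem:Tlip1,lem:Tlip2}), by tracking how $\vec v^{p,r}(\vec\alpha)$ changes as $\vec\alpha$ changes. First I would use \cref{eq:B-T}, which identifies $\abs{\prioB_p^r(\vec\alpha,n)}$ with $b_p(\vec v^{p,r}(\vec\alpha),n)$, so that
\[
\abs{\prioB_p(\vec\alpha,n)} = \sum_{r\ge 0} b_p\bigl(\vec v^{p,r}(\vec\alpha),n\bigr),
\]
and the sum is finite by \cref{lem:Tzero} (once $r-\alpha_p$ is large enough the relevant $\totalT$ is zero). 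So the task is to compare $\sum_{r\ge 0} b_p(\vec v^{p,r}(\vec\alpha^{(1)}),n)$ with $\sum_{r\ge 0} b_p(\vec v^{p,r}(\vec\alpha^{(2)}),n)$.

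The key observation is that, by the explicit formula \cref{eq:v-alpha}, the vector $\vec v^{p,r}(\vec\alpha)$ depends on $\vec\alpha$ only through the differences $\alpha_p-\alpha_{p'}$, and in a very rigid way: along the sequence $r=0,1,2,\dots$, the coordinate $v^{p,r}_{p'}$ increases by exactly $1$ each time $r$ crosses the threshold $\alpha_p-\alpha_{p'}$ (up to the $\pm 1$ depending on the preassigned order), and the coordinate indexed by $p$ itself is simply (the positive part of) $r$, hence strictly increasing in $r$ by steps of $1$ for $r\ge\alpha_p$. Shifting a single difference $\alpha_p-\alpha_{p'}$ by an integer $\delta_{p'}$ amounts to relabelling which step $r$ the increment at coordinate $p'$ happens; concretely, one can interpolate between $\vec\alpha^{(1)}$ and $\vec\alpha^{(2)}$ by changing one difference $\alpha_p-\alpha_{p'}$ at a time by $\pm 1$, and each such unit change perturbs the telescoping sequence $\bigl(\vec v^{p,r}(\vec\alpha)\bigr)_{r\ge 0}$ by inserting or deleting exactly one unit increment at the $p'$-coordinate. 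For a unit change one is therefore comparing two nested chains $\vec v^{(0)}\le\vec v^{(1)}\le\cdots$ (strictly increasing at $p$) of the exact shape required by \cref{lem:Tlip}, where one chain is obtained from the other by adding $\vec e_q$ (with $q=p'$) to a tail of the sequence — and \cref{lem:Tlip} says the total change in $\sum_r b_p$ is between $0$ and $n+1$ in absolute value. Summing this over the $\sum_{p'}\abs{(\alpha^{(1)}_p-\alpha^{(1)}_{p'})-(\alpha^{(2)}_p-\alpha^{(2)}_{p'})}$ unit steps needed to pass from $\vec\alpha^{(1)}$ to $\vec\alpha^{(2)}$ and applying the triangle inequality yields the claimed bound
\[
\Bigl\lvert\,\abs{\prioB_p(\vec\alpha^{(1)},n)}-\abs{\prioB_p(\vec\alpha^{(2)},n)}\,\Bigr\rvert
\le (n+1)\sum_{p'\in\cP}\bigl\lvert(\alpha^{(1)}_p-\alpha^{(1)}_{p'})-(\alpha^{(2)}_p-\alpha^{(2)}_{p'})\bigr\rvert.
\]

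The main obstacle I anticipate is purely bookkeeping: making precise the claim that a unit change in one difference $\alpha_p-\alpha_{p'}$ turns the sequence $\bigl(\vec v^{p,r}(\vec\alpha)\bigr)_{r}$ into one where a single coordinate is shifted by $\vec e_q$ on a tail, so that \cref{lem:Tlip} applies verbatim. One has to be careful with (i) the index shift $r\mapsto r\pm$const that changing $\alpha_p$ alone induces (this reindexing does not change the value of the sum $\sum_{r\ge 0}$, using that the summand eventually vanishes by \cref{lem:Tzero} and is $b_p(\vec 0,\dots)$-bounded below, i.e. the tail and head contributions match up), and (ii) the $\pm 1$ discrepancy in \cref{eq:v-alpha} coming from the preassigned order, which only affects finitely many initial terms and can be absorbed. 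Once the reduction to \cref{lem:Tlip} is set up cleanly, the rest is immediate. Monotonicity (the lower bound $0$ in \cref{lem:Tlip}) also gives, as a byproduct, the inequality in \cref{lem:mono} consistently, which is a useful sanity check.
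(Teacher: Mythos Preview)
Your proposal is correct and follows essentially the same route as the paper: reduce to single unit changes in the differences $(\alpha_p-\alpha_{p'})$ via the triangle inequality, then for each such step invoke \cref{lem:Tlip} on the sequence $\bigl(\vec v^{p,r}(\vec\alpha)\bigr)_{r\ge 0}$. The bookkeeping obstacles you anticipate dissolve if handled as the paper does: since $\abs{\prioB_p(\vec\alpha,n)}$ depends only on the differences, keep $\alpha_p$ fixed and change only $\alpha_{p'}$ for $p'\ne p$, which eliminates your issue (i) entirely; and for (ii), when $\alpha_q\mapsto\alpha_q+1$ for a single $q\ne p$, the formula \cref{eq:v-alpha} gives a clean threshold $r_0$ with $\vec v^{p,r}(\vec\alpha+\vec e_q)=\vec v^{p,r}(\vec\alpha)$ for $r<r_0$ and $\vec v^{p,r}(\vec\alpha+\vec e_q)=\vec v^{p,r}(\vec\alpha)+\vec e_q$ for $r\ge r_0$, so \cref{lem:Tlip} applies verbatim to the tail and the earlier terms cancel.
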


\begin{proof}
	Shifting all handicaps by the same constant does not change the priority order and thus also does not change $|\prioB_p|$. Since the right-hand side of the above inequality is also invariant under translation we may assume that $\alpha_p^{(1)}=\alpha_p^{(2)}=0$.
	
	Starting with $\vec \alpha = \vec \alpha^{(1)}$, we can perform a sequence of changes where at each step we change the value of the handicap $\vec \alpha$ at some $p'\ne p$ by exactly 1, so that the vector $(\alpha_{p'})_{p' \in \cP}$ ends up being equal to $(\alpha_{p'}^{(2)})$ after exactly $\sum_{p'\in\cP}\abs{\alpha^{(1)}_{p'}-\alpha^{(2)}_{p'}}$ moves.
	So it suffices to prove the inequality for each step in the process, i.e., showing that for every $\vec \alpha \in \ZZ^\cP$ and $q \ne p$,
	\[
		0 \le \sabs{\prioB_{p}(\vec\alpha,n)} - \sabs{\prioB_{p}(\vec\alpha + \vec e_q,n)} \le n+1.
	\]
	The first inequality follows from \cref{lem:mono}.
	For the second inequality, by $\abs{\prioB_p(\vec \alpha, n)} = \sum_{r \ge 0} \abs{\prioB_p^r(\vec \alpha, n)}$ and \cref{eq:B-T}, it suffices to prove
	\[
		\sum_{r \ge 0} b_p(\vec v^{p,r}(\vec \alpha), n) - 	\sum_{r \ge 0} b_p(\vec v^{p,r}(\vec \alpha + \vec e_q), n) \le n+1.
	\]
	From \cref{eq:v-alpha}, we see that there is some $r_0$ so that $\vec v^{p,r}(\vec \alpha + \vec e_q) = \vec v^{p,r}(\vec \alpha)$ for all $r < r_0$ and $\vec v^{p,r}(\vec \alpha + \vec e_q) = \vec v^{p,r}(\vec \alpha) + \vec e_q$ for all $r \ge r_0$. Restricting the sum to $r \ge r_0$ (the earlier terms cancel), we obtain the desired inequality by \cref{lem:Tlip}.
\end{proof}

\subsection{Vanishing lemma}
Now we start considering the interactions between different planes  at the joints.
The next statement is a vanishing lemma that is tailored to this joints problem.
We omit the dependence on the handicap $\vec \alpha$ and the degree $n$ from the notation since we are keeping them fixed in this subsection.
Recall from the beginning of the section that, at each joint, we arbitrarily chose three planes that form this joint.
Note that this vanishing lemma is the only place in the proof where we use the hypothesis that the three planes that form a joint do not all lie in some hyperplane.

\begin{lemma}\label{lem:vanishing}
	Let $(\cJ, \cF)$ be a joints configuration of planes in $\RR^6$.
	Given a handicap $\vec \alpha \in \ZZ^\cJ$ and its associated priority order, and a positive integer $n$, 
	choose $\prioD_{p,F}$ as earlier.
	
	Then for every nonzero polynomial $g \in \RR[x_1, \dots, x_6]$ of degree at most $n$, one has 
	\[
	D_1D_2D_3 g(p) \ne 0
	\]
	for some joint $p \in \cJ$ formed by $F_1, F_2, F_3 \in \cF$, and some $D_i \in \prioD_{p, F_i}$ for each $i=1,2,3$.
\end{lemma}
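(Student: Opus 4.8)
\textbf{Proof proposal for Lemma~\ref{lem:vanishing}.}
The plan is to argue by contradiction. Suppose $g \ne 0$ has degree at most $n$ and $D_1 D_2 D_3 g(p) = 0$ for every joint $p$ and every choice of $D_i \in \prioD_{p,F_i}$. The goal is to show $g$ must actually vanish identically, contradicting $g \ne 0$. First I would unpack what the hypothesis $D_1 D_2 D_3 g(p) = 0$ says geometrically. Since $p \in \cJ$ is formed by three planes $F_1, F_2, F_3$ whose tangent directions together span $\RR^6$, after a linear change of coordinates I may assume $F_1, F_2, F_3$ are the coordinate planes spanned by $\{e_1,e_2\}$, $\{e_3,e_4\}$, $\{e_5,e_6\}$ respectively. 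Then $\prioD_{p,F_i}$ consists of partial-derivative operators in the two variables adapted to $F_i$, and the full set $\{D_1 D_2 D_3\}$ ranges over a spanning family (in fact a basis, by the construction in Section~\ref{sec:deriv-eval}) of a certain space of third-stage mixed derivative operators. The key point is that the joints hypothesis (not all three planes in a hyperplane) is exactly what makes the tangent directions span, so that composing derivative operators from the three planes gives genuine derivatives in all $6$ coordinate directions.

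The heart of the argument is then a descent/induction on the degree $n$, or equivalently a minimal-counterexample argument. The idea: I want to show that if $g$ satisfies the vanishing hypothesis with respect to the priority order determined by $\vec\alpha$ and degree $n$, then each of the ``outputs'' $D g$ for appropriate single-plane derivative operators $D$ satisfies a corresponding hypothesis at lower degree and/or higher vanishing order. Concretely, fix a joint $p$ and one of its planes, say $F_1$. I would like to say that for each $D_1 \in \prioD_{p,F_1}$, the polynomial $D_1 g$ restricted to the two-dimensional spans of $F_2$ and $F_3$ around $p$ has the property that all its $\prioD_{p,F_2}$- and $\prioD_{p,F_3}$-derivatives vanish at $p$. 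Iterating: the restriction of $g$ to $F_1$ vanishes to the order prescribed by $\vec v^{p,r}(\vec\alpha)$ once we've processed enough of the priority order, and by the single-variable vanishing lemma applied along each plane (using that the priority order was built precisely so that $\bigcup_{(p',r')\prec(p,r)}\prioB_{p'}^{r'}$ spans $\sum \totalB_{p'}^{r'}$), one forces $g|_{F_1} \equiv 0$. The mechanism is that, plane by plane, the non-redundant vanishing conditions we collected, once assembled, force the full order-of-vanishing conditions $\totalT(\vec v, n)$ that would make $g$ restricted to each plane vanish entirely — and the restriction-to-$F_i$-vanishes argument combined with spanning in all directions forces $g = 0$.

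Let me restate the cleanest version of the mechanism: by construction, for each plane $F$, $\bigsqcup_{p \in \cJ \cap F}\prioB_{p,F}(\vec\alpha, n)$ is a basis of all of $(\RR[x,y]_{\le n})^\vee$ (Eq.~\ref{eq:summation}), so the common kernel over a single plane $F$ of all functionals $g\mapsto D g(p)$ with $D\in\prioD_{p,F}$, $p \in \cJ\cap F$, is $\{0\}$ — i.e. $g|_F = 0$. But the hypothesis only gives us that the \emph{triple} compositions vanish, not the single-plane evaluations. So the actual descent must be: process the planes one at a time. I expect the argument to go: pick a plane $F_1$ through some joint; the hypothesis, specialized by fixing $D_2 = D_3 = \mathrm{id}$ is not available (those aren't third-order), so instead one looks at the polynomial $h = D_2 D_3 g$ for suitable $D_i$ from $F_2, F_3$, which has degree $< n$, note that $D_1 h(p) = 0$ for all $D_1 \in \prioD_{p,F_1}$ and all joints $p$ on $F_1$, and run an induction on degree to conclude $h \equiv 0$; feeding this back controls $g$ itself. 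The spanning hypothesis is used to ensure the derivative operators from different planes, when composed, do not collapse — i.e., $\prioD_{p,F_1} \cdot \prioD_{p,F_2} \cdot \prioD_{p,F_3}$ genuinely realizes all degree-$(r_1+r_2+r_3)$ partials at $p$.

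\textbf{Main obstacle.} The hard part will be organizing the induction correctly: we are not simply told ``$g$ vanishes to high order on each plane,'' only that a specific spanning set of \emph{triple} derivative functionals vanishes, and these functionals were chosen greedily (non-redundantly) relative to a priority order that mixes all the joints and all orders together across three different planes. Making the descent compatible with this priority-order bookkeeping — so that after peeling off derivatives along $F_3$ and $F_2$ one lands back in exactly the same type of hypothesis for $g$ (or a derivative of $g$) along $F_1$ at a strictly smaller degree — is the delicate point. I would handle it by induction on $n$, with the base case $n<0$ (or $n$ small enough that $\binom{n+2}{2}$ forces triviality) trivial, and in the inductive step carefully check that the vanishing conditions satisfied by the lower-degree derivative polynomial are precisely those required to invoke the inductive hypothesis, using Eq.~\ref{eq:B-T} and the characterization of common kernels of $\bigcup_{(p',r')\prec(p,r)}\prioB_{p'}^{r'}$ in terms of $\vec v^{p,r}(\vec\alpha)$.
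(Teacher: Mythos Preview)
Your plan has a genuine gap, and it is exactly the obstacle you flag at the end but do not resolve. You propose to fix a joint $p$, set $h = D_2 D_3 g$ for some $D_2 \in \prioD_{p,F_2}$, $D_3 \in \prioD_{p,F_3}$, observe that $D_1 h(p') = 0$ for all $D_1 \in \prioD_{p',F_1}$ and all joints $p'$ on $F_1$, and then run an induction on degree. But that observation is false: the hypothesis only tells you $D_1 D_2 D_3 g(p') = 0$ when $D_2, D_3$ come from $\prioD_{p',F_2'}, \prioD_{p',F_3'}$ for the planes $F_2', F_3'$ chosen at $p'$. Your fixed operators $D_2, D_3$ were chosen at $p$, along $F_2, F_3$; at another joint $p' \in F_1$ the other two planes are generally different, and even if they happen to coincide, $\prioD_{p',F_i}$ need not contain your $D_2, D_3$. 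So $h$ does not inherit any usable single-plane vanishing hypothesis along $F_1$, and the induction on $n$ never gets off the ground.

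The paper's argument avoids this entirely by working not with induction on $n$ but with the priority order directly. One picks the joint $p$ that minimizes $(p, v_p(g))$ under $\prec$, where $v_p(g)$ is the vanishing order of $g$ at $p$. Since the three planes span, some triple $D_i \in \totalD_{p,F_i}^{r_i}$ with $r_1+r_2+r_3 = v_p(g)$ has $D_1D_2D_3 g(p) \ne 0$; among all such triples, maximize $|\{i : D_i \in \prioD_{p,F_i}\}|$. The contradiction hypothesis forces, say, $D_1 \notin \prioD_{p,F_1}$. Now the minimality of $(p,v_p(g))$ is used: for every $(p',r') \prec (p,r_1)$ with $p' \in F_1$, one has $r' + r_2 + r_3 < v_{p'}(g)$, so $D D_2 D_3 g(p') = 0$ for all $D \in \totalD_{p',F_1}^{r'}$. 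This says $D_2 D_3 g$ lies in the common kernel of all $\totalB_{p',F_1}^{r'}$ with $(p',r') \prec (p,r_1)$; since adding $\prioB_{p,F_1}^{r_1}$ to that kernel-defining set recovers $\totalB_{p,F_1}^{r_1}$, there must be some $D \in \prioD_{p,F_1}^{r_1}$ with $D D_2 D_3 g(p) \ne 0$, contradicting the maximality of the triple. The point is that minimality in $\prec$ is precisely what lets you transfer information from $p$ to the other joints $p'$ on $F_1$ without ever needing $D_2, D_3$ to belong to anything at $p'$ --- only the total vanishing order $v_{p'}(g)$ is used there.
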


\begin{proof}
	Suppose, on the contrary, that there were some nonzero $g \in \RR[x_1, \dots, x_6]_{\le n}$ such that $D_1 D_2 D_3 g(p) = 0$ for every $p \in \cJ$, with $F_1, F_2, F_3 \in \cF$ being the three planes passing through $p$, and every $D_i \in \prioD_{p, F_i}$ for each $i=1,2,3$.
	
	Choose $p \in \cJ$ to minimize $(p, v_p(g))$ under $\prec$, where $v_p(g)$ is the order of vanishing of $g$ at $p$.
	
	Recall that $\totalD_{p,F}^r$ is the space of $r$-th order derivative operators at $p$ along $F$.
	Since $g$ vanishes to order exactly $v_p(g)$ at $p$ and the planes $F_1,F_2,F_3$ do not all line in one hyperplane, there exist $D_1 \in \totalD_{p, F_1}^{r_1}$, $D_2 \in \totalD_{p, F_2}^{r_2}$, $D_3 \in \totalD_{p, F_1}^{r_3}$ with $D_1D_2D_3 g (p) \ne 0$ and $r_1 + r_2 + r_3 = v_p(g)$. 
	Among all choices of $D_1, D_2, D_3$ (including choices of $r_1, r_2, r_3$), choose ones so that $|\{ i \in [3] : D_i \in \prioD_{p, F_i}\}|$ is maximized.	
	By the assumption at the beginning of the proof, one must have $D_i \notin \prioD_{p, F_i}$ for some $i$. Relabeling if necessary, assume that $D_1 \notin \prioD_{p, F_1}$. 
	
	Suppose $p' \in F_1 \cap \cJ$ and $r' \in \ZZ_{\ge 0}$ satisfy $(p', r') \prec (p, r_1)$. 
	We get $(p', r' + r_2 + r_3) \prec (p, r_1 + r_2 + r_3) = (p, v_p(g))$. 
	By the choice of $p$, we have $(p, v_p(g)) \preceq (p', v_{p'}(g))$.
	Thus $(p', r' + r_2 + r_3) \prec (p', v_{p'}(g))$, and hence $r' + r_2 + r_3 < v_{p'}(g)$.
	If follows that $D D_2 D_3 g (p') = 0$ for all $D \in \totalD_{p', F_1}^{r'}$ by the definition of vanishing order.
	
	From the above paragraph we deduce that $D_2D_3 g$ lies in the common kernel of $\totalB_{p', F_1}^{r'}$ ranging over all $(p',r') \in (F_1 \cap \cJ) \times \ZZ_{\ge 0}$ with $(p',r') \prec (p,r_1)$.
	Since $D_1D_2D_3 g(p) \ne 0$, we deduce that $D_2D_3 g$ does not lie in the common kernel of $\prioB_{p,F_1}^{r_1}$, i.e., there is some $D \in \prioD_{p, F_1}^{r_1}$ with $DD_2D_3 g(p) \ne 0$. But this $D$ contradicts the earlier assumption that the choice of $(D_1, D_2, D_3)$ maximizes $|\{ i : D_i \in \prioD_{p, F_i}\}|$.
\end{proof}

The next inequality uses parameter counting.

\begin{lemma}
\label{lem:param-ineq}
Assume the same setup as \cref{lem:vanishing}. We have
\[\sum_{p\in \cJ}\prod_{F\ni p}\abs{\prioD_{p, F}(\vec{\alpha},n)}\geq \binom{n+6}{6}.\]
\end{lemma}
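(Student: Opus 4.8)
The plan is to read the left-hand side as the cardinality of an explicit family of linear functionals on the finite-dimensional vector space $V := \RR[x_1,\dots,x_6]_{\le n}$, and then apply \cref{lem:vanishing} to see that this family has trivial common kernel, which forces its size to be at least $\dim V = \binom{n+6}{6}$.

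First I would unwind the left-hand side combinatorially. By the convention fixed at the start of the section, ``$F \ni p$'' means that $F$ is one of the three distinguished planes $F_1, F_2, F_3$ chosen at the joint $p$, so $\prod_{F \ni p}\abs{\prioD_{p,F}(\vec\alpha,n)} = \abs{\prioD_{p,F_1}}\cdot\abs{\prioD_{p,F_2}}\cdot\abs{\prioD_{p,F_3}}$ is exactly the number of triples $(D_1,D_2,D_3)$ with $D_i \in \prioD_{p,F_i}$. Hence $\sum_{p\in\cJ}\prod_{F\ni p}\abs{\prioD_{p,F}(\vec\alpha,n)}$ is the number of tuples $(p,D_1,D_2,D_3)$ with $p \in \cJ$ and $D_i \in \prioD_{p,F_i}$. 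To each such tuple I attach the linear functional $\ell_{p,D_1,D_2,D_3} : g \mapsto D_1D_2D_3 g(p)$ on $V$; the left-hand side is then an upper bound for (in fact equals, up to coincidences among the $\ell$'s) the size of the set $\set{\ell_{p,D_1,D_2,D_3}}$ of these functionals.

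Next, \cref{lem:vanishing} says precisely that for every nonzero $g \in V$ there is a joint $p$ formed by $F_1,F_2,F_3$ and operators $D_i \in \prioD_{p,F_i}$ with $D_1D_2D_3 g(p) \ne 0$; in other words the common kernel $\bigcap \ker \ell_{p,D_1,D_2,D_3}$ is $\set 0$. A finite collection of linear functionals on a finite-dimensional vector space has trivial common kernel if and only if it spans the dual space, so the number of functionals in our family is at least $\dim V^* = \dim V = \binom{n+6}{6}$, and since the left-hand side of the claimed inequality counts these functionals with multiplicity it is at least as large. I do not expect any real obstacle here: the substantive work was already isolated in \cref{lem:vanishing}, and what remains is the standard ``dual-spanning'' step of the polynomial method together with the bookkeeping that the product over $F \ni p$ runs over exactly the three distinguished planes at $p$.
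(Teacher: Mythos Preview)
Your proposal is correct and is essentially the same argument as the paper's: both attach to each tuple $(p,D_1,D_2,D_3)$ the linear functional $g\mapsto D_1D_2D_3 g(p)$ on $\RR[x_1,\dots,x_6]_{\le n}$, and use \cref{lem:vanishing} to conclude that these functionals have trivial common kernel, forcing their number to be at least $\dim \RR[x_1,\dots,x_6]_{\le n} = \binom{n+6}{6}$. The paper phrases the last step contrapositively (if the count were smaller, parameter counting would produce a nonzero $g$ in the common kernel), but this is the same linear-algebra fact you invoke via ``trivial common kernel implies spanning the dual.''
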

\begin{proof}
Denote the left hand side by $A$ and right hand side by $B$. Consider the constraints on $g\in \RR[x_1,\ldots,x_6]_{\leq n}$ where for all $p\in \cJ$ formed by the planes $F_1,F_2,F_3\in\cF$, we require
\[D_1D_2D_3g(p)=0\quad\forall D_i\in \prioD_{p,F_i}, i=1,2,3.\]
This requirement is asking $A$ linear functionals on $\RR[x_1,\ldots,x_6]_{\leq n}$, which has dimension $B$, to vanish at $g$. Hence, if $A<B$, then there exists a nonzero polynomial $g$ in $\RR[x_1,\ldots,x_6]_{\leq n}$ that satisfies all the conditions, which would contradict \cref{lem:vanishing}.
\end{proof}
\subsection{Choosing the handicaps}
We say that a joints configuration $(\cJ,\cF)$ is \emph{connected} if the following graph is connected: the vertex set is $\cJ$, with two joints adjacent if there is some plane in $\cF$ containing both joints.

\begin{lemma}\label{lem:handicaps}
Let $(\cJ,\cF)$ be any connected joints configuration, and let $n$ be some positive integer. Then there exists a choice of handicap $\vec\alpha \in \ZZ^\cJ$ such that
\[\max_{p\in \cJ}\prod_{F\ni p}\frac{\abs{\prioD_{p, F}(\vec{\alpha},n)}}{\binom{n+2}{2}}-\min_{p\in \cJ}\prod_{F\ni p}\frac{\abs{\prioD_{p, F}(\vec{\alpha},n)}}{\binom{n+2}{2}}\leq \frac{C}{n}\]
for some constant $C$ that only depends on $(\cJ,\cF)$ but not $n$.
\end{lemma}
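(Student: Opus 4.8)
The plan is to produce the desired handicap by an iterative balancing procedure, using the three structural properties established in \cref{lem:zero,lem:mono,lem:lip}. Define, for a handicap $\vec\alpha$ and a joint $p$, the quantity $\phi_p(\vec\alpha) := \prod_{F \ni p}\sabs{\prioD_{p,F}(\vec\alpha,n)}/\binom{n+2}{2}$ (a product over the three chosen planes through $p$), so we want to find $\vec\alpha$ with $\max_p \phi_p(\vec\alpha) - \min_p \phi_p(\vec\alpha) \le C/n$. By \cref{eq:summation}, for each fixed plane $F$ one has $\sum_{p \in \cJ \cap F}\sabs{\prioB_{p,F}(\vec\alpha,n)} = \binom{n+2}{2}$, so the normalized quantities $\sabs{\prioD_{p,F}(\vec\alpha,n)}/\binom{n+2}{2}$ lie in $[0,1]$ and sum to $1$ over the joints on $F$; hence each $\phi_p(\vec\alpha) \in [0,1]$. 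The idea is to run a discrete "water-filling" dynamics: repeatedly pick a joint $p$ achieving the minimum of $\phi_p$ and increase $\alpha_p$ by $1$ (keeping all other coordinates fixed); by monotonicity (\cref{lem:mono}) this can only increase $\phi_p$ itself while it can only decrease $\phi_q$ for $q \ne p$ (since $\alpha_q - \alpha_{q'}$ is unchanged or decreased for all $q'$). So this process never decreases the minimum value $\min_p \phi_p$, and we want to run it until the spread is small.

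\textbf{Key steps, in order.} First, I would reduce to a \emph{bounded search}: since only the differences $\alpha_p - \alpha_{p'}$ matter (all of \cref{eq:v-alpha}, \cref{lem:zero}, \cref{lem:mono}, \cref{lem:lip} depend only on these differences), we may normalize, say, $\min_p \alpha_p = 0$. Uniform boundedness (\cref{lem:zero}) shows that if $\alpha_p < \alpha_q - n$ for some $q$, then $\phi_p(\vec\alpha) = 0$; since $(\cJ,\cF)$ is connected, a joint with $\phi_p = 0$ forces some neighbor (a joint sharing a plane with $p$) to have a large handicap difference, and one propagates this along the connectivity graph to conclude that for the balanced configurations we are interested in, all pairwise differences $\sabs{\alpha_p - \alpha_{p'}}$ are bounded by a constant $D_0 = D_0(\cJ,\cF)$ times $n$. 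Concretely: if the spread of $\phi$ is not already $\le$ some target, I will argue a joint at minimum $\phi$ must be "close" in handicap to a joint at maximum $\phi$ after finitely many steps. Thus the set of relevant handicaps (mod shifting) is finite, so the water-filling dynamics, being monotone in $\min_p \phi_p$ and confined to a finite set, must stabilize: there is a handicap $\vec\alpha^*$ where increasing the handicap at any minimizing joint no longer strictly increases $\min_p \phi_p$. (Alternatively, one invokes a compactness argument directly: take a sequence of handicaps with $\min_p \phi_p$ converging to its supremum over the finite relevant set, and extract a maximizer.)

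\textbf{From the maximizer to the bound.} At such an optimal $\vec\alpha^*$, I claim $\max_p \phi_p(\vec\alpha^*) - \min_p \phi_p(\vec\alpha^*) \le C/n$. Suppose not. Let $p_{\min}$ achieve the minimum. Increasing $\alpha^*_{p_{\min}}$ by $1$: by \cref{lem:lip}, each $\phi_q$ changes by at most $(n+1)\sum_{p'}\sabs{(\alpha'_q - \alpha'_{p'}) - (\alpha^*_q - \alpha^*_{p'})}/\binom{n+2}{2}$, but this Lipschitz bound (with the difference being supported on one coordinate) gives a change of $O(1/n)$ per plane, hence $\phi_q$ changes by $O(1/n)$ — \emph{so far so bad}, since that is the wrong direction. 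The fix is to increase $\alpha_{p_{\min}}$ not by $1$ but by a large amount $k$ at once, or more robustly, to argue as follows: by monotonicity the minimum of $\phi$ is nondecreasing under the step, and if the spread exceeds $C/n$ then $p_{\min}$ is far from being a maximizer; but $\phi_{p_{\min}}$ is a nondecreasing step-function of $\alpha_{p_{\min}}$ taking values in $[0,1]$ with jumps of size $O(1/n)$ (by Lipschitz), so increasing $\alpha_{p_{\min}}$ in unit steps, $\phi_{p_{\min}}$ eventually rises to within $O(1/n)$ of $\max_p \phi_p$ unless it plateaus — and a plateau, combined with the parameter-counting inequality \cref{lem:param-ineq} (which forces $\sum_p \prod_{F\ni p}\sabs{\prioD_{p,F}} \ge \binom{n+6}{6}$, i.e. the average of $\phi_p$ over joints cannot collapse), prevents $\phi_{p_{\min}}$ from being stuck too low. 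The main obstacle, and the place requiring care, is exactly this last step: converting "optimal handicap" plus the Lipschitz jump size $O(1/n)$ plus the parameter-counting lower bound into the clean $\max - \min \le C/n$ estimate, i.e. ruling out a configuration where every minimizing joint is "jammed" at a low value because every unit increase in its handicap is either useless (no new non-redundant constraint) or instantly makes it a maximizer. I expect this to be handled by a potential-function / extremal argument on the finite relevant handicap set, using monotonicity to guarantee the process terminates and Lipschitz continuity to control the final gap.
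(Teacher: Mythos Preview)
Your proposal assembles the right ingredients (finiteness, monotonicity, Lipschitz, connectedness) but has a genuine gap in the extremal argument. The claim that the water-filling step ``never decreases the minimum value $\min_p \phi_p$'' is not justified: increasing $\alpha_{p_{\min}}$ by $1$ does weakly increase $\phi_{p_{\min}}$ (by \cref{lem:mono}), but it also weakly \emph{decreases} every other $\phi_q$, and nothing prevents some $\phi_q$ that was only barely above the old minimum from dropping below it. The Lipschitz bound only controls this drop by $O(1/n)$, which is exactly the scale of the spread you are trying to establish, so you cannot simply absorb it. As a result your ``optimal $\vec\alpha^*$'' is not well characterized, and the closing paragraph is a sequence of heuristics rather than an argument; in particular the appeal to \cref{lem:param-ineq} is a red herring (parameter counting plays no role in this lemma).

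The paper fixes this with a sharper extremal principle: among the finitely many attainable vectors $(\phi_p(\vec\alpha))_{p\in\cJ}$, take the one that is \emph{lexicographically minimal after sorting in descending order}. If there is a gap $\phi_{p_t}-\phi_{p_{t+1}}>C/n$ in the sorted list, one decreases the handicap of \emph{all} of $p_1,\dots,p_t$ simultaneously by $1$. Monotonicity then goes the right way on both sides of the cut: each $\phi_{p_i}$ with $i\le t$ weakly decreases and each with $i>t$ weakly increases. The Lipschitz bound (change $\le C/(2n)$ per joint) together with the assumed gap of size $C/n$ guarantees the top-$t$ block is preserved after the perturbation, so the sorted vector weakly decreases in lex order, and strictly so if anything changed --- contradicting minimality. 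If nothing changed, iterate: connectedness provides some $i\le t<j$ with $p_i,p_j$ on a common plane, and after enough iterations \cref{lem:zero} forces $\phi_{p_i}=0$, contradicting $\phi_{p_i}\ge\phi_{p_t}>C/n>0$. The two ideas you are missing are (i) perturbing the entire ``top'' block at once rather than a single minimizing joint, and (ii) using lex-min of the descending-sorted vector as the potential, which is precisely what aligns monotonicity with the desired direction.
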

\begin{proof}
Fix $n$ throughout the proof. Denote
\[W_p(\vec{\alpha}) = \prod_{F\ni p}\frac{\abs{\prioD_{p, F}(\vec{\alpha},n)}}{\binom{n+2}{2}}\]
for all $p\in \cJ$. The $\alpha_p$ are arbitrary integers. However, note that shifting all $\alpha_p$ by the same constant does not affect the priority order and thus does not affect $W_p(\alpha)$. Furthermore, by \cref{lem:zero}, if two handicaps differ by more than $n$ at two points on the same plane, then $W_p(\vec\alpha)=0$. Therefore, there are only finitely many possibilities for the vector $(W_p(\vec{\alpha}) : p\in \cJ)$. Among those possibilities, choose the one so that after sorting $W_p(\vec{\alpha})$ in descending order, this vector is least in lexicographical order over all such possible vectors. Suppose that the sorted result is
\[W_{p_1}(\vec{\alpha})\geq W_{p_2}(\vec{\alpha})\geq \cdots\geq W_{p_{\abs{\cJ}}}(\vec{\alpha}). \]
We will show that $W_{p_i}(\vec{\alpha})-W_{p_{i+1}}(\vec{\alpha})\leq C'/n$ for some constant $C'$ to be determined. This will imply the desired statement.

Suppose for the sake of contradiction that the above claim does not hold. Let $t$ be the least positive integer such that $W_{p_t}(\vec{\alpha})-W_{p_{t+1}}(\vec{\alpha})>C'/n$. Then let $\vec{v}=\vec{e}_{p_1}+\cdots+\vec{e}_{p_t}$ and let $\vec{\alpha}'=\vec{\alpha}-\vec{v}$ be a new handicap. We will consider the difference between $W_p(\vec{\alpha})$ and $W_p(\vec{\alpha}')$. By \cref{lem:lip}, 
\[
\abs{\sabs{\prioD_{p, F}(\vec{\alpha},n)} - \sabs{\prioD_{p, F}(\vec{\alpha}',n)}}
\le
\abs{\cJ}(n+1) 
\le \frac{2\abs{\cJ}}{n} \binom{n+2}{2}
\]
for each joint $p$ on each plane $F$. We have $\abs{\prioD_{p, F}(\vec{\alpha},n)} \le \binom{n+2}{2}$ by \cref{eq:summation}. We use the following telescoping inequality. For $x_1,x_2,x_3,y_1,y_2,y_3\in[0,1]$, \[|x_1x_2x_3-y_1y_2y_3|\leq |x_1-y_1|x_2x_3+|x_2-y_2|y_1x_3+|x_3-y_3|y_1y_2\leq 3\max_i|x_i-y_i|.\] Thus
\[
\abs{W_p(\vec{\alpha}')-W_p(\vec{\alpha})}
\leq \frac{6|\cJ|}{n}=\frac{C'}{2n}
\]
where we choose $C'=12|\cJ|$.

By the monotonicity established in \cref{lem:mono}, we know that $W_{p_i}(\vec{\alpha}')\leq W_{p_i}(\vec{\alpha})$ for $i\leq t$, and $W_{p_i}(\vec{\alpha}')\geq W_{p_i}(\vec{\alpha})$ for $i>t$. 
By \cref{eq:summation}, we know that if  $W_p(\vec{\alpha}')\neq W_p(\vec{\alpha})$ for some $p$, then there exists $i\leq t$ such that $W_{p_i}(\vec{\alpha}')< W_{p_i}(\vec{\alpha})$.
However, since the difference between $W_p(\vec{\alpha})$ and $W_p(\vec{\alpha}')$ is at most $C'/2n$, and $W_{p_t}(\vec{\alpha})-W_{p_{t+1}}(\vec{\alpha})>C'/n$, we know that $W_{p_1}(\vec{\alpha}'),\ldots,W_{p_t}(\vec{\alpha}')$ are still the $t$ largest values among $(W_p(\vec{\alpha}'))_{p\in \cJ}$. 
This shows that $\vec{\alpha}'$ gives a strictly lower lexicographical order of $(W_p(\vec{\alpha}'))_{p\in\cJ}$, which is a contradiction.

Hence $W_p(\vec{\alpha})=W_p(\vec{\alpha}') = W_p(\vec{\alpha}-\vec{v})$ for all $p\in \cJ$. By the same argument, we know that $W_p(\vec{\alpha})=W_p(\vec{\alpha}') = W_p(\vec{\alpha}-c\vec{v})$ holds for $p\in\cJ$ for any positive integer $c$. By connectedness,  we can find some $i\leq t<j$ such that $p_i$ and $p_j$ are on the same plane. As a consequence, if $c$ is chosen sufficiently large such that $\alpha_{p_i}-c< \alpha_{p_j}-n$, this implies that $W_{p_i}(\vec{\alpha}-c\vec{v})=0$. By our ordering this implies that $W_{p_{i'}}(\vec{\alpha}-c\vec{v})=0$ for all $i'\geq i$. In particular, $W_{p_t}(\vec{\alpha})=W_{p_{t+1}}(\vec{\alpha})=0$, contradicting our earlier assumption that $W_{p_t}(\vec{\alpha})-W_{p_{t+1}}(\vec{\alpha})>C'/n$.
\end{proof}

We are now ready to prove the joints theorem for a set of planes in $\RR^6$.

\begin{proof}[Proof that $N$ planes in $\RR^6$ have $\sqrt{10/3}N^{3/2}$ joints]
Assume first that the joints configuration is connected. Let $n$ be some large positive integer. In this proof we will use $O$-notation to suppress constants that can depend on $(\cJ,\cF)$ arbitrarily as long as they are independent of $n$. Choose $\vec{\alpha}$ according to \cref{lem:handicaps}. Then there exists $W$ such that
\[\left|\prod_{F\ni p}\frac{|\prioD_{p, F}|}{\binom{n+2}{2}}-W\right|\leq \frac{C}{n}\]
for all $p\in \cJ$. By \cref{lem:param-ineq}, we have
\[|\cJ|W\binom{n+2}{2}^{3} \ge \binom{n+6}{6} - O(n^5).\]
Therefore
\[W\geq \frac{8}{6!\cdot |\cJ|} - O(n^{-1}).\]
So there is some constant $c>0$ (depending on $\cJ$ but not on $n$) so that $W \in [c,1]$ for all sufficiently large $n$.
For each $p \in \cJ$, by a Taylor series approximation,
\[
W^{1/3} = \left(\prod_{F\ni p}\frac{|\prioD_{p,F}|}{\binom{n+2}{2}}\right)^{1/3} + O(n^{-1}).
\]
Hence (in the summations, $p$ ranges over joints and $F$ ranges over planes in $\cF$),
\begin{align*}
    3|\cJ|\left(\frac{W}{N^3}\right)^{1/3}
    &= 3\sum_{p}\left(\prod_{F\ni p}\frac{|\prioD_{p, F}|}{N\binom{n+2}{2}}\right)^{1/3}+O(n^{-1})\\
    &\leq \sum_{p}\sum_{F\ni p}\frac{|\prioD_{p, F}|}{N\binom{n+2}{2}} + O(n^{-1})&&\text{\footnotesize [by AM-GM]}\\
    &= \sum_{F}\sum_{p\in F}\frac{|\prioD_{p, F}|}{N\binom{n+2}{2}}+O(n^{-1})\\
    &=\sum_{F}\frac{1}{N}+O(n^{-1})&&\text{[by \footnotesize \cref{eq:summation}]}\\
    &=1+O(n^{-1}).
\end{align*}
Thus
\[W\leq \frac{N^3}{27|\cJ|^3} + O(n^{-1}).\]
By comparing the leading term in the upper bound and the lower bound of $W$, i.e., letting $n$ go to infinity, we get that
\[\frac{8}{6!\cdot |\cJ|}\leq \frac{N^3}{27|\cJ|^3},\]
and by rearranging we get that
\[|\cJ|\leq \sqrt{\frac{10}{3}}N^{3/2}.\]
The above argument proves the result for connected joints configurations. In general, decompose the joints configuration $(\cJ,\cF)$ into connected components (in the sense of the associated graph) $(\cJ_1,\cF_1)$, \dots, $(\cJ_k,\cF_k)$. Denote $N_i=\abs{\cF_i}.$ Then
\[|\cJ|=\sum_{i=1}^{k}|\cJ_i|\leq \sqrt{\frac{10}{3}}\sum_{i=1}^{k}N_i^{3/2}\leq \sqrt{\frac{10}{3}}N^{3/2}. \qedhere\]
\end{proof}

\begin{remark}
The arguments here generalize straightforwardly to joints of flats in arbitrary dimensions.	
\end{remark}

\section{Derivatives along varieties}\label{sec:derivatives}

In this section we discuss how to generalize the argument in \cref{sec:joints-of-planes} to varieties in $\FF^d$. There are two issues that we need to address.
The first is to define appropriate higher order directional derivatives along varieties.
As we explain below, it does not suffice to simply take derivatives along the tangent plane, as those miss the higher order data of the variety.
The second is to generalize derivatives from the reals to general fields.
Since we are working with polynomials, differentiation can be viewed as a formal algebraic operation. To handle fields of positive characteristics, we use Hasse derivatives.

Let $V$ be a $k$-dimensional variety in $\FF^n$. Let $I(V)$ be the ideal of polynomials in $\FF[x_1, \dots, x_d]$ that vanish on $V$. Define $R_V = \FF[x_1, \dots, x_d]/I(V)$. The elements of $R_V$ are called \emph{regular functions} on $V$.
Let $p$ be a regular point on $V$, that is, a point where the Zariski tangent space of $V$ at $p$ is also $k$-dimensional.
Given a nonnegative integer $r$, we would like to write down derivative operators $D$ on $\FF[x_1, \dots, x_d]$ so that $Dg(p)$ is well defined not just when $g \in \FF[x_1, \dots, x_d]$, but also when $g$ is a regular function on $V$.
The point here is that regular functions on $V$ may be represented as polynomials in $\FF[x_1, \dots, x_d]$ in non-unique ways (by adding a polynomial that vanishes on $V$), but we should study derivative operators $D$ whose evaluation $Dg(p)$ does not depend on this representation of $g$.

\subsection{An explicit example}

We consider the explicit example of the circle $V$ in $\RR^2$ centered at $(0,1/2)$ of radius $1/2$. In particular, $V$ is defined by the equation $y = x^2 + y^2$. 
Let $p=(0,0)$ be the origin. 
How should we define a second-order derivative at $p$ along $V$? 

Naively one might take $\partial^2 / \partial x^2$ since the tangent at $p$ is the $x$-coordinate direction.
However, consider evaluation of this derivative at $p$ applied to the two sides of $y = x^2 + y^2$ (an identity of regular functions on $V$): the left-hand side gives $0$ while the right-hand side gives 2. So $\partial^2 / \partial x^2$ does not induce a linear functional on the space of regular functions on $V$.

To fix this issue, we can rewrite all regular functions on $V$ as power series centered at $p$ using the \emph{local coordinate} $x$ of $V$. Indeed, by repeated substituting $y \gets x^2 + y^2$, we can write $y$ as a power series in $x$:
\begin{align*}
	y &= x^2 + y^2 \\
	  &= x^2 + (x^2 + y^2)^2 \\
	  &= x^2 + (x^2 + (x^2 + y^2)^2)^2 \\
	  &= x^2 + x^4 + 2x^6 + \cdots 
\end{align*}

We would like a derivative operator $D$ on $\RR[x,y]$ so that $Dg(0,0)$ equals to the coefficient of $x^2$ in $g(x, x^2 + x^4 + 2x^6 + \cdots)$, which in turn equals to the coefficient of $x^2$ plus the coefficient of $y$ in $g(x,y)$.
It is not hard to see that only such choice is $\frac{1}{2}\frac{\partial^2}{\partial x^2} + \frac{\partial }{\partial y}$. Conversely, it is not hard to check that $Dg(0,0) = 0$ for every $g \in \RR[x,y]$ that vanishes identically on $V$.

Elaborating on this example further, 
for each nonnegative integer $r$, we will define $\totalD_{p,V}^r$ to be a one-dimensional space spanned a derivative operator $D$ on $\RR[x,y]$ such that $Dg(0,0)$ equals to the coefficient of $x^r$ in $g(x, x^2 + x^4 + 2x^6 + \cdots)$.
Thus (here $\ang{\cdot}$ denotes the span)
\begin{align*}
	\totalD_{p,V}^0 &= \ang{\operatorname{Id}} \\
	\totalD_{p,V}^1 &= \ang{\frac{\partial}{\partial x}} \\		
	\totalD_{p,V}^2 &= \ang{\frac{1}{2}\frac{\partial^2}{\partial x^2} + \frac{\partial }{\partial y}} \\		
	\totalD_{p,V}^3 &= \ang{\frac{1}{6}\frac{\partial^3}{\partial x^3} + 
							\frac{\partial^2 }{\partial x \partial y}} \\	
	\totalD_{p,V}^4 &= \ang{\frac{1}{24}\frac{\partial^4}{\partial x^4} + 
							\frac{1}{2}\frac{\partial^3 }{\partial x^2 \partial y} + 
							\frac{1}{2}\frac{\partial^2}{\partial y^2}} \\
	&\vdots 
\end{align*}
Then, for each each $D \in \totalD_{p,V}^r$, the map sending $g \in \RR[x,y]$ to $D g(0,0)$ passes to a linear functional on the space $R_V = \RR[x,y]/I(V)$ of regular functions on $V$.

The computation in the above example can be extended to any variety over any field, as we explain below.

\subsection{Local coordinates}

Given a regular point $p$ on a $k$-dimensional variety $V$, after a translation and a linear change of coordinates, suppose that $p$ is at the origin and the first $k$ coordinate vectors are tangent to $V$.
Then by assumption, there are polynomials $f_{k+1},\ldots,f_d$ without any constant or linear terms so that on $V$, we have $x_{k+1} = f_{k+1}(x_1, \dots, x_k)$, $\dots$, $x_d = f_d(x_1, \dots, x_k)$. 
For each $i=k+1, \dots, d$, by repeated substitutions using the defining equations, as functions on $V$, we can write each $x_i$ as a formal power series $h_i(x_1, \dots, x_k)$ in the \emph{local coordinates} $x_1, \dots, x_k$ for $V$ at $p$.

The procedure of taking a power series described earlier can be described in algebraic geometry as a \emph{completion}.
We give a quick summary here and refer the reader to a standard algebraic geometry textbook, e.g., \cite[Chapter 7]{Eisenbud95} \cite[Chapter 29]{Vakil17}.
Let $p$ be a regular point on a $k$-dimensional variety $V$ in $\FF^d$.
Let $\mathfrak{m}_p \subset R_V$ be the maximal ideal of regular functions that vanish at $p$.
Then the \emph{completion} $\widehat{R_{p,V}}$ of $R_V$ at $p$ is the inverse limit $\varprojlim R_{V}/\mathfrak{m}^m_p$. 
The family of projection maps $R_V\to R_{V}/\mathfrak{m}^m_p$ induces a map $\iota_{p,V}:R_V\to \widehat{R_{p,V}}.$

The completion should be thought of as the ring of formal power series around $p$. For example, when $R_V=\FF[x]$ and $\mathfrak{m}_p=(x)$, the completion is the ring of formal power series $\FF\sqbb{x}$. 
More generally, for a regular point $p$ on $V$, assuming that $p$ is the origin and $x_1, \dots, x_k \in \mathfrak m_p$ 
span the Zariski cotangent space $\mathfrak{m}_p/\mathfrak{m}_p^2$, 
the map $\FF\sqbb{x_1,\ldots,x_k}\to \widehat{R_{p,v}}$ sending $x_i$ to $\iota_{p,V}(x_i)$ is an isomorphism (say, by the Cohen structure theorem). 
In other words, there is a \textit{local coordinate system} at $p$ so that every regular function on $V$ can be written as a formal power series around $p$.

It will be useful to know that the formal power series expansion of a regular function is zero if and only if the regular function is zero, i.e., the completion map $R_V \to \wh{R_{p,V}}$ is injective. 
This fact follows from the Krull intersection theorem below (recall that our varieties are always irreducible).

\begin{theorem}[Krull intersection theorem] 
Let $R$ be an integral domain and $I$ be a proper ideal of $R$. Then $\bigcap_{m=0}^{\infty}I^m=\{0\}$.
\end{theorem}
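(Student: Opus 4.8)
The plan is to put $J=\bigcap_{m\ge 0}I^m$ and show $J=0$. The engine of the proof is Nakayama's lemma in its determinant (Cayley--Hamilton) form: once we know that $J$ is a finitely generated $R$-module and that $J=IJ$, Nakayama produces an element $a\in I$ with $(1+a)J=0$, and then the integral domain hypothesis together with $I\ne R$ forces $J=0$. Thus the real content is the upgrade of the trivial containment $IJ\subseteq J$ to an equality, and the finite generation of $J$; both are available once $R$ is Noetherian, which we may assume since in the application $R=R_V=\FF[x_1,\dots,x_d]/I(V)$ is a finitely generated $\FF$-algebra and hence Noetherian.

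Here are the steps, in order. \textbf{Step 1.} Apply the Artin--Rees lemma to the submodule $J\subseteq R$: there is an integer $c\ge 0$ with $I^n\cap J=I^{\,n-c}(I^c\cap J)$ for all $n\ge c$. \textbf{Step 2.} By definition $J\subseteq I^m$ for every $m$, so $I^n\cap J=J$ and $I^c\cap J=J$; taking $n=c+1$ the Artin--Rees identity collapses to $J=IJ$. \textbf{Step 3.} Since $R$ is Noetherian, $J$ is a finitely generated $R$-module, so applying the determinant trick to the identity endomorphism of $J$ (whose image lies in $IJ=J$) yields $a\in I$ with $(1+a)j=0$ for all $j\in J$. \textbf{Step 4.} If $J\ne 0$, choose $0\ne j\in J$; then $(1+a)j=0$ in the integral domain $R$ gives $1+a=0$, i.e.\ $-1\in I$, contradicting that $I$ is proper. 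Hence $J=0$.

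The one genuine subtlety is that the statement uses Noetherianness somewhere: for an arbitrary integral domain the conclusion can fail (for instance in $\FF[x^{1/2},x^{1/3},x^{1/5},\dots]$ with $I$ the ideal generated by all the $x^{1/n}$, the nonzero element $x^{1/2}=(x^{1/2m})^{m}$ lies in $I^m$ for every $m$), and the two places above where Noetherianness enters — the Artin--Rees lemma and the finite generation of $J$ — are precisely what make the argument go through in our setting. If a self-contained route is preferred, Steps 1--2 can be replaced by Perdry's elementary derivation of $J=IJ$ from a finite generating set $I=(a_1,\dots,a_k)$ via a maximality argument and a determinant manipulation inside $R[t_1,\dots,t_k]$, but invoking Artin--Rees plus Nakayama is the shortest path and is all we need here.
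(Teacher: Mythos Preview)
The paper does not prove the Krull intersection theorem; it is quoted as a standard fact from commutative algebra (with references to Eisenbud and Vakil) and used only to deduce that the completion map $R_V\to\widehat{R_{p,V}}$ is injective. So there is no paper proof to compare against.

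Your argument is the standard one and is correct: Artin--Rees gives $J=IJ$, finite generation of $J$ lets you apply the Cayley--Hamilton form of Nakayama to obtain $(1+a)J=0$ with $a\in I$, and the domain hypothesis plus $I\ne R$ finishes. You are also right to flag that the statement as written in the paper is missing the Noetherian hypothesis: for a general integral domain the conclusion fails, and your example (the subring of $\overline{\FF(x)}$ generated by fractional powers of $x$, with $I$ the ideal they generate) is a valid witness. In the paper's application $R=R_V=\FF[x_1,\dots,x_d]/I(V)$ is a finitely generated $\FF$-algebra, hence Noetherian, so no harm is done; but strictly speaking the theorem should read ``Noetherian integral domain'' rather than ``integral domain.''
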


\subsection{Hasse derivatives}

In the explicit example earlier, the main goal of taking derivatives is to extract coefficients.
This is a formal algebraic procedure that does not rely on real analysis.
To allow for arbitrary fields, including those of positive characteristics, we use an algebraic variant known as Hasse derivatives, whose definition and basic properties we summarize below. For proofs of these basic properties of Hasse derivatives, we refer the reader to \cite{DKSS13}, where Hasse derivatives were used to study the finite field Kakeya problem.

\begin{definition}[Hasse derivatives]
For any $d$-tuple $\vec{\omega}=(\omega_1,\ldots,\omega_d)$ of nonnegative integers, 
	define $\Hasse^{\vec{\omega}}$ to be the linear operator on $\FF[x_1,\ldots,x_d]$ given by (writing $x^\delta = x_1^{\delta_1} \cdots x_d^{\delta_d}$ and $\binom{\vec{\gamma}}{\vec{\omega}}:=\binom{\gamma_1}{\omega_1}\cdots \binom{\gamma_d}{\omega_d}$)
\[
	\Hasse^{\vec{\omega}}x^{\vec{\delta}} = \binom{\vec{\delta}}{\vec{\omega}} x^{\vec{\delta}-\vec{\omega}}
\]
for every $d$-tuple $\vec{\delta} = (\delta_1, \dots, \delta_d)$ of nonnegative integers. 
\end{definition}

In particular, $\Hasse^{\vec{\omega}}x^{\vec{\delta}} = 0$ unless $\vec\delta \ge \vec\omega$ coordinatewise.

Over the reals, it is not hard to see that the two notions of derivatives are related by a constant factor
\[
\Hasse^{\vec{\omega}} = \frac{1}{\vec\omega !} \frac{\partial^{\vec\omega}}{\partial x^{\vec \omega}} := \frac{1}{\omega_1!\cdots\omega_d!}\frac{\partial^{\omega_1+\cdots+\omega_d}}{\partial x_1^{\omega_1}\cdots \partial x_d^{\omega_d}}.
\]
Like usual derivatives, Hasse derivatives commute:
\[
\Hasse^{\vec \alpha} \Hasse^{\vec \beta} 
= \binom{\vec\alpha + \vec\beta}{\vec\alpha} \Hasse^{\vec \alpha + \vec\beta} 
= \Hasse^{\vec \beta} \Hasse^{\vec \alpha}.
\]

Hasse derivatives form an algebraic generalization of the usual derivatives when acting on polynomials or formal power series.
The evaluation of a Hasse derivative corresponds to coefficient extraction (without the factorial factors that might be troublesome in fields of positive characteristics). 
Indeed, we have the following ``Taylor's theorem'': given formal variables $x_1, \dots, x_d, y_1, \dots, y_d$ and a polynomial $g \in \FF[x_1, \dots, x_d]$, we have
\begin{equation}\label{eq:taylor}
    g(x+y) = \sum_{\vec{\omega}\in\ZZ_{\geq 0}^d}(\Hasse^{\vec{\omega}}g)(x)y^{\vec{\omega}}
\end{equation}
for any $g\in\FF[x_1,\ldots,x_d]$. 
This identity can be easily checked for each monomial $g(x) = x^{\vec \delta}$.
From this characterization, we see that Hasse derivatives behave well under affine coordinate transforms (as we would expect for derivatives). 
For example, it makes sense to talk about directional Hasse derivatives without specifying a choice of a coordinate system.

\subsection{Higher order directional derivatives}

Now that we have the tools of completion and Hasse derivatives, we are ready to define higher order directional derivatives at a regular point $p$ along a $k$-dimensional variety $V$ in $\FF^d$, generalizing the notion for flats from \cref{sec:joints-of-planes}.

By an affine change of coordinates, assume that $p$ is at the origin, and the tangent space of $V$ at $p$ is spanned by the first $k$ coordinate directions. 
For each $i=k+1, \dots, d$, write each $x_i$ as a formal power series $h_i(x_1, \dots, x_k)$ in the ``local coordinates'' $x_1, \dots, x_k$ for $V$ at $p$. Equivalently, $h_i(x_1, \dots, x_k)$ is the image of $x_i$ under the completion map $R_V \to \wh{R_{p,V}} \cong \FF\sqbb{x_1, \dots, x_k}$.

We define $\totalD_{p,V}^r$ to be the space of all linear combinations $D$ of Hasse derivative operators on $\FF[x_1, \dots, x_d]$ such that the map $\FF[x_1, \dots, x_d] \to \FF$ defined by $g \mapsto Dg(p)$ equals a linear form on coefficients of the homogeneous degree $r$ part of 
\[
\wh g(x_1, \dots, x_k) := g(x_1, x_2, \dots, x_k, h_{k+1}(x_1,\dots, x_k), \dots, h_{d}(x_1,\dots, x_k)),
\]
which is the power series representation of $g$ as a regular function on $V$ in local coordinates at $p$.
Let us also write out this definition more explicitly. Given $(\gamma_1, \dots, \gamma_k) \in \ZZ_{\ge 0}^k$, define
\[
D^{\vec{\gamma}}_{p,V}=\sum_{\vec{\omega}\in \ZZ_{\geq 0}^{d}}c^{\vec{\gamma}}_{\vec{\omega}}\Hasse^{\vec{\omega}}
\]
where
\[
c^{\vec{\gamma}}_{\vec{\omega}} = 
\text{the coefficient of }
x_1^{\gamma_1} \cdots x_k^{\gamma_k} 
\text{ in }
x_1^{\omega_1}\cdots x_k^{\omega_k} h_{k+1}(x_1, \dots, x_k)^{\omega_{k+1}} \cdots h_d(x_1, \dots, x_k)^{\omega_d}
\]
Then $D_{p,V}^{\vec \gamma} g(0, \dots, 0)$ equals the coefficient of $x^{\vec\gamma}$ in $\wh g(x_1, x_2, \dots, x_k)$.
We then set 
\[
\totalD_{p,V}^r = \Span \set{ D_{p,V}^{\vec \gamma} : \vec\gamma = (\gamma_1, \dots, \gamma_k)\in\ZZ_{\ge 0}^k, \gamma_1 + \cdots + \gamma_k = r}.
\]
Note that the $D_{p,V}^{\vec \gamma}$ in the above set are linearly independent. To see this, first note that because no $h_i$ has constant or linear terms, one has
\begin{equation} \label{eq:D-top}
	D_{p,V}^{\vec\gamma} \in \Hasse^{(\vec\gamma,0,\ldots,0)} + \Span\set{ \Hasse^{\vec \omega} : \omega_1 + \cdots + \omega_d < \gamma_1 + \cdots + \gamma_k}.
\end{equation}
The Hasse derivative operators $\Hasse^{\vec{\omega}}$ are linearly independent as $\vec\omega$ ranges over $\ZZ_{\ge0}^d$. 
Since the top weight component of $D_{p,V}^{\vec\gamma}$ is $\Hasse^{(\vec\gamma,0,\ldots,0)}$, we see that the $D_{p,V}^{\vec\gamma}$'s are linearly independent as $\vec \gamma$ ranges over $\ZZ_{\ge 0}^k$.

The key property, as well as the motivation for the above definition, is that for every $D \in \totalD_{p,V}^r$, there is a well defined map $R_V \to \FF$ given by $g\mapsto Dg(p)$. To define this derivative evaluation, we can replace $g \in R_V$ by a representative $g \in \FF[x_1, \dots, x_d]$, and we need to check that $Dg(p)$ does not depend on the choice of the representative. 
Indeed, if $g$ is identically zero on $V$, then $\wh g  = 0$, and hence $Dg(p) = 0$.

The above explicit formula defines $\totalD_{p,V}^r$ assuming that $p$ is at the origin and the tangent space of $V$ at $p$ is spanned by the first $k$ coordinate directions.
By an affine transformation (using \cref{eq:taylor} to determine the behavior of Hasse derivatives under affine transformations), we can define the space $\totalD_{p,V}^r$ of $r$-th order directional derivatives at any regular point $p$ on a variety $V$. 

Having defined $\totalD_{p,V}^r$, we now can proceed nearly identically as in \cref{sec:joints-of-planes} to prove the joints theorem for varieties.
Details are given in the next section.

\section{Proof of the main theorem}\label{sec:carbery}

\subsection{Priority order, handicaps, and a choice of basis}

Given a set of joints $\cJ$ with a fixed preassigned order, and a handicap $\vec\alpha \in \ZZ^\cJ$, we define the priority order $\prec$ on $\cJ \times \ZZ_{\ge 0}$ as before.

Let $n$ be a positive integer.
Let $R_{V,\leq n}$ denote the space of regular functions on $V$ that can be represented as a polynomial of degree at most $n$ in $x_1, \dots, x_n$.
In other words, $R_{V,\leq n}$ is the image of $\FF[x_1,\ldots,x_d]_{\leq n}$ under 
the projection $\FF[x_1,\ldots,x_d] \to R_V$. 

Define $\totalB^r_{p,V}(n)$ to be the set of linear functionals on $R_{V,\le n}$ of the form $g \mapsto Dg(p)$ for some $D \in \totalD_{p,V}^r$ (this is a well defined linear functional as explained earlier). Note that $g \in R_{V, \le n}$ vanishes under $\totalB_{p,V}^0(n) + \cdots + \totalB_{p,V}^{r-1}(n)$ if and only if $g$ vanishes to order at least $r$ at $p$. Here a regular function $g$ on $V$ \emph{vanishes at $p$ to order at least $r$} if $g\in \mathfrak{m}_{p,V}^r$ where $\mathfrak{m}_{p,V}$ is the maximal ideal of $R_V$ corresponding to $p$. Equivalently, power series representation of $g$ using local coordinates at $p$ has no terms with degree lower than $r$.

Now, exactly as in \cref{sec:deriv-eval}, we go through all pairs $(p,r) \in (\cJ \cap V)\times \ZZ_{\ge 0}$ according to the priority order and choose sets $\prioB_{p,V}^r(\vec \alpha, n) \subset \totalB_{p,V}^r(n)$ as earlier so that the disjoint union 
$\bigcup_{(p',r') \preceq (p,r)} \prioB_{p',V}^{r'}(\vec \alpha, n)$
is a basis of 
$\sum_{(p',r') \preceq (p,r)} \totalB_{p',V}^{r'}(\vec \alpha, n)$.
Choose
$\prioD_{p,V}^r (\vec\alpha, n) \subset \totalD_{p,V}^r$ 
with the same size as
$\prioB_{p,V}^r(\vec \alpha, n)$ so that 
$
\prioB_{p, V}^r(\vec \alpha, n) = \{g \mapsto Dg(p) : D \in \prioD_{p, V}^r(\vec \alpha, n)\}
$.
Finally, write 
$\prioB_{p, V}(\vec \alpha, n) := 
\bigcup_{r \ge 0} \prioB_{p, V}^r(\vec \alpha, n)$
and
$\prioD_{p, V}(\vec \alpha, n) := \bigcup_{r \ge 0} \prioD_{p, V}^r(\vec \alpha, n)$.

From the Krull intersection theorem, it follows that for every $p \in V$, $\sum_{r \ge 0} \totalB_{p,V}^r(\vec \alpha, n)$ spans the dual space of $R_{V, \le n}$. 
Hence the disjoint union $\bigcup_{p \in V} \prioB_{p, V}(\vec \alpha, n)$ is a basis of the space of linear forms on $R_{V, \le n}$.
Thus
\begin{equation}\label{eq:sum-var}
\sum_{p\in \cJ\cap V}\abs{\prioB_{p, V}(\vec{\alpha},n)}
=
\dim R_{V, \le n}
=
\deg V \binom{n}{\dim V} + O_V(n^{\dim V - 1}).
\end{equation}
Furthermore there is some $n_0(V)$ so that $\dim R_{V, \le n}$ is a polynomial in $n$ for all $n\ge n_0(V)$. This is a standard fact about the Hilbert series for a variety (see, e.g., \cite[Chapter 18.6]{Vakil17}).

\subsection{Regular functions with given vanishing orders}

This subsection parallels \cref{sec:poly-vanish}.
Here we fix a $k$-dimensional variety $V$ and a finite set of points $\cP \subset V$.
Given a vector $\vec v \in \ZZ_{\ge 0}^\cP$, 
define 
\[
\totalT(\vec v, n)  = \{ g \in R_{V, \le n} : g \text{ vanishes to order} \ge v_p \text{ at each } p\in\cP\}.
\]
Set $b_p(\vec v, n) := \codim_{\totalT(\vec v, n)} \totalT(\vec v + \vec e_p, n)$.

\begin{lemma}[Bounded domain] \label{lem:var-Tzero}
For every $n$ there is some $C_V(n)$ 
so that if $\vec v \in \ZZ_{\ge 0}^\cP$ has $\max_{p \in \cP} v_p > C_V(n)$ then $\dim \totalT(\vec v, n) = 0$. 
\end{lemma}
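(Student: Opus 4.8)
The plan is to deduce this directly from the Krull intersection theorem stated above, much as the injectivity of the completion map $R_V\to\wh{R_{p,V}}$ was deduced. Fix $n$. The key point is that $R_{V,\le n}$ is finite-dimensional, being the image of $\FF[x_1,\dots,x_d]_{\le n}$ under the projection $\FF[x_1,\dots,x_d]\to R_V$. For a point $p\in\cP$, recall that $g\in R_{V,\le n}$ vanishes to order $\ge r$ at $p$ exactly when $g\in\mathfrak m_{p,V}^{r}$, so consider the decreasing chain of subspaces
\[
R_{V,\le n}\;\supseteq\;R_{V,\le n}\cap\mathfrak m_{p,V}\;\supseteq\;R_{V,\le n}\cap\mathfrak m_{p,V}^{2}\;\supseteq\;\cdots .
\]
Since these are subspaces of a fixed finite-dimensional space, the chain stabilizes: there is an integer $r_0(p)$, depending only on $V$, $n$, and $p$, with $R_{V,\le n}\cap\mathfrak m_{p,V}^{r}=R_{V,\le n}\cap\mathfrak m_{p,V}^{r_0(p)}$ for all $r\ge r_0(p)$. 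This stabilized value equals $\bigcap_{r\ge 0}\bigl(R_{V,\le n}\cap\mathfrak m_{p,V}^{r}\bigr)=R_{V,\le n}\cap\bigcap_{r\ge 0}\mathfrak m_{p,V}^{r}$. Now $R_V$ is an integral domain, since $V$ is irreducible, and $\mathfrak m_{p,V}$ is a proper ideal, so the Krull intersection theorem gives $\bigcap_{r\ge 0}\mathfrak m_{p,V}^{r}=0$. Hence $R_{V,\le n}\cap\mathfrak m_{p,V}^{r}=0$ for every $r\ge r_0(p)$.

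To finish, set $C_V(n):=\max_{p\in\cP}r_0(p)$. If $\vec v\in\ZZ_{\ge 0}^\cP$ satisfies $v_p>C_V(n)$ for some $p$, then $v_p\ge r_0(p)$, so every $g\in\totalT(\vec v,n)$ lies in $R_{V,\le n}\cap\mathfrak m_{p,V}^{v_p}=0$, i.e.\ $\dim\totalT(\vec v,n)=0$.

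I do not expect a genuine obstacle here: this is the direct analogue of \cref{lem:Tzero}, with the Krull intersection theorem supplying the fact (geometrically obvious for curves) that a degree-bounded regular function on an irreducible variety cannot vanish to arbitrarily high order at a point; the only point to record carefully is that the stabilized subspace coincides with the intersection over all $r$, which is immediate. If a quantitative bound on $C_V(n)$ turns out to be needed later — for instance to track the $n$-dependence in the handicap-balancing step, as the factor $n+1$ was used in \cref{lem:lip} — one can instead take $C_V(n)=n\deg V$: passing to a finite extension of $\FF$ if necessary (which does not change $\dim\totalT(\vec v,n)$), for a given nonzero $g\in R_{V,\le n}$ cut $V$ by $\dim V-1$ generic hyperplanes through $p$ to obtain, via Bertini, an irreducible curve $C\subseteq V$ of degree $\deg V$ that is smooth at $p$ and satisfies $g|_C\not\equiv 0$; since the restriction map $R_V\to R_C$ carries $\mathfrak m_{p,V}^{r}$ into $\mathfrak m_{p,C}^{r}$, the order of vanishing of $g$ at $p$ on $V$ is at most that of $g|_C$ at $p$ on $C$, which is at most $n\deg C=n\deg V$ by B\'ezout's theorem.
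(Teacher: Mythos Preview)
Your proof is correct and follows essentially the same route as the paper: finite-dimensionality of $R_{V,\le n}$ plus the Krull intersection theorem forces $R_{V,\le n}\cap\mathfrak m_{p,V}^{r}=0$ for large $r$, and then one takes the maximum over the finitely many $p\in\cP$. Your additional quantitative remark ($C_V(n)\le n\deg V$ via Bertini/B\'ezout) is not needed here, since the handicap-balancing argument in \cref{lem:carbery-handicaps} only uses uniform boundedness as a compactness/finiteness statement and the Lipschitz constant comes from the Hilbert-polynomial estimate in \cref{lem:var-Tlip}, but it is a correct side observation.
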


\begin{proof}
By the Krull intersection theorem, 
$
	\bigcap_{m \ge 0} \mathfrak{m}_{p,V}^m=\{0\}
$.
Since $R_{V,\leq n}$ is finite dimensional, there exists $C = C_V(n)$ such that $\mathfrak{m}_{p,V}^C\cap R_{V,\leq n}=\{0\}$. Hence, if $\vec{v}\in\ZZ_{\geq 0}^{\cJ\cap V}$ satisfies $v_p\geq C$, then $\totalT_V(\vec{v},n)=\{0\}$.
\end{proof}

We omit the proofs of the next two lemmas, which mirror those of \cref{sec:poly-vanish}, except to note that the last line of the proof of \cref{lem:Tlip2} should be adapted as
\[
\codim_{\totalT(\vec 0, n)} \totalT(\vec 0, n-1)
= \dim R_{V, \le n} - \dim R_{V, \le n-1} 
= \deg V \binom{n}{\dim V -1} + O_V(n^{\dim V - 2}).
\]
To see this we use the fact that $\dim R_{V, \le n}$, for sufficiently large $n$, equals to a polynomial (the Hilbert polynomial) whose leading term given in \cref{eq:sum-var}. The right-hand side is the finite difference of this polynomial which can readily be seen to have the above form.

\begin{lemma}[Monotonicity] \label{lem:var-Tmono}
Let $p \in \cP$. Suppose $\vec v^{(1)}, \vec v^{(2)} \in \ZZ_{\ge 0}^\cP$ satisfy $\vec v^{(1)} \ge \vec v^{(2)}$ coordinatewise and with equality at $p$.
Then $b_p(\vec v^{(1)}, n) \le b_p(\vec v^{(2)},n)$ for all $n$.
\end{lemma}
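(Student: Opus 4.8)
The plan is to mirror the proof of \cref{lem:Tmono} essentially verbatim, replacing polynomials in $\RR[x,y]_{\le n}$ by regular functions in $R_{V,\le n}$ and ordinary higher-order partial derivatives by the operators in $\totalD_{p,V}^r$. The one conceptual ingredient to reuse is that $b_p(\vec v, n)$ admits a description as the rank of a single linear map depending only on the coordinate $v_p$ of $\vec v$ (and not on the other coordinates).

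First I would record this rank description. Fix $\vec v \in \ZZ_{\ge 0}^{\cP}$ and write $a := v_p$. Let $D^{(1)}, \dots, D^{(N)}$ be a basis of $\totalD_{p,V}^{a}$ — for instance the operators $D^{\vec{\gamma}}_{p,V}$ with $\gamma_1 + \cdots + \gamma_k = a$, which are linearly independent by \cref{eq:D-top} — and let $\phi_{\vec v} \colon \totalT(\vec v, n) \to \FF^N$ be the linear map $g \mapsto (D^{(1)}g(p), \dots, D^{(N)}g(p))$. This is well defined on $R_{V,\le n}$, hence on the subspace $\totalT(\vec v, n)$, as explained after the definition of $\totalB^r_{p,V}(n)$. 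By the characterization recorded at the start of \cref{sec:carbery} — that $g$ vanishes under $\totalB^0_{p,V}(n) + \cdots + \totalB^{a}_{p,V}(n)$ if and only if $g \in \mathfrak m_{p,V}^{a+1}$ — a function $g \in \totalT(\vec v, n)$ lies in $\totalT(\vec v + \vec e_p, n)$ precisely when $\phi_{\vec v}(g) = 0$. Hence $\totalT(\vec v + \vec e_p, n) = \ker \phi_{\vec v}$, and so $b_p(\vec v, n) = \dim \totalT(\vec v, n) - \dim \ker \phi_{\vec v} = \operatorname{rank} \phi_{\vec v}$.

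With this in hand the lemma is immediate. Since $\vec v^{(1)} \ge \vec v^{(2)}$ coordinatewise, any regular function vanishing to order at least $v^{(1)}_q$ at every $q \in \cP$ in particular vanishes to order at least $v^{(2)}_q$ at every such $q$, so $\totalT(\vec v^{(1)}, n) \subseteq \totalT(\vec v^{(2)}, n)$. Because $v^{(1)}_p = v^{(2)}_p = a$, the maps $\phi_{\vec v^{(1)}}$ and $\phi_{\vec v^{(2)}}$ are given by the same formula $g \mapsto (D^{(i)}g(p))_i$, and $\phi_{\vec v^{(1)}}$ is the restriction of $\phi_{\vec v^{(2)}}$ to the subspace $\totalT(\vec v^{(1)}, n)$. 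The rank of a linear map can only drop upon restriction to a subspace, so $b_p(\vec v^{(1)}, n) = \operatorname{rank} \phi_{\vec v^{(1)}} \le \operatorname{rank} \phi_{\vec v^{(2)}} = b_p(\vec v^{(2)}, n)$.

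I expect no genuine obstacle: the content is formal linear algebra exactly as in \cref{lem:Tmono}. The only step that differs from the flat case, and the only one worth double-checking, is the identity $\totalT(\vec v + \vec e_p, n) = \ker \phi_{\vec v}$; this is not a new computation but a direct appeal to the vanishing-order characterization of the spaces $\totalB^r_{p,V}(n)$ — itself a consequence of the Krull intersection theorem and the construction of $\totalD_{p,V}^r$ via local coordinates — so it should go through without difficulty.
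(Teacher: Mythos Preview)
Your proof is correct and follows exactly the route the paper intends: the paper omits the proof of \cref{lem:var-Tmono} and simply directs the reader to mirror \cref{lem:Tmono}, which is precisely what you have done by interpreting $b_p(\vec v,n)$ as the rank of the $v_p$-th order derivative-evaluation map on $\totalT(\vec v,n)$ and noting that rank can only drop upon restriction to the subspace $\totalT(\vec v^{(1)},n)\subseteq\totalT(\vec v^{(2)},n)$. One minor remark: the identity $\ker\phi_{\vec v}=\totalT(\vec v+\vec e_p,n)$ follows directly from the local-coordinate description of $\totalD_{p,V}^r$ (the $D^{\vec\gamma}_{p,V}$ extract exactly the degree-$a$ coefficients of the local power series) and does not actually require the Krull intersection theorem.
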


\begin{lemma}[Lipschitz continuity] \label{lem:var-Tlip}
Let $p,q\in\cP$ be distinct points. Suppose $\vec v^{(0)}, \vec v^{(1)}, \dots \in \ZZ^\cP$ are such that $\vec v^{(0)} \le  \vec v^{(1)} \le \cdots$ coordinate-wise and strictly increasing at the coordinate indexed by $p$. Then
\[
0\le \sum_{r \ge 0} b_p(\vec v_p^{(r)}, n)
- \sum_{r \ge 0} b_p(\vec v_p^{(r)} + \vec e_q, n) 
\le \deg V \binom{n}{\dim V -1} + O_V(n^{\dim V - 2}).
\]
\end{lemma}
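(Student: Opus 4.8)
The plan is to imitate, essentially line for line, the two-step proof of \cref{lem:Tlip} in the polynomial case, now with $\RR[x,y]$ replaced by $R_{V,\le n}$. Concretely, I would first establish the variety analog of \cref{lem:Tlip1}, namely $b_p(\vec v + \vec e_q, n) \ge b_p(\vec v, n-1)$ for distinct $p,q\in\cP$, and then the variety analog of \cref{lem:Tlip2}, namely $\sum_{r\ge 0} b_p(\vec v^{(r)}, n) - \sum_{r\ge 0} b_p(\vec v^{(r)}, n-1) \le \deg V\binom{n}{\dim V -1} + O_V(n^{\dim V - 2})$. Combining these exactly as in \cref{lem:Tlip} gives the claimed upper bound; the lower bound $0\le\sum_{r\ge 0} b_p(\vec v^{(r)}, n) - \sum_{r\ge 0} b_p(\vec v^{(r)} + \vec e_q, n)$ is immediate from \cref{lem:var-Tmono} applied termwise (with $\vec v^{(r)} + \vec e_q \ge \vec v^{(r)}$, equal at $p$ since $q\ne p$).

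For the analog of \cref{lem:Tlip1} I would take $f$ to be the restriction to $V$ of an affine-linear form on $\FF^d$ vanishing at $q$ but at no other point of $\cP$; such a form exists once $\FF$ is large enough, and enlarging $\FF$ does not change the rank-type quantities $b_p(\vec v, n)$. Since $V$ is irreducible, $R_V$ is an integral domain, so multiplication by $f$ is an injective map $R_{V,\le n-1}\to R_{V,\le n}$ (and it lands in $R_{V,\le n}$ because $f$ has a degree-one representative). As in \cref{lem:Tlip1}, superadditivity of vanishing orders under multiplication gives $f\cdot\totalT(\vec v, n-1)\subseteq \totalT(\vec v+\vec e_q, n)$, and the fact that $f$ is a unit in the local ring of $V$ at $p$ (because $f(p)\ne 0$) gives $f\cdot\totalT(\vec v, n-1)\cap \totalT(\vec v+\vec e_p+\vec e_q, n) = f\cdot\totalT(\vec v+\vec e_p, n-1)$. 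Then $b_p(\vec v+\vec e_q, n) = \codim_{\totalT(\vec v+\vec e_q, n)}\totalT(\vec v+\vec e_p+\vec e_q, n) \ge \codim_{f\cdot\totalT(\vec v, n-1)} f\cdot\totalT(\vec v+\vec e_p, n-1) = \codim_{\totalT(\vec v, n-1)}\totalT(\vec v+\vec e_p, n-1) = b_p(\vec v, n-1)$ by \cref{eq:subspace-inequality} and injectivity of multiplication by $f$.

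The analog of \cref{lem:Tlip2} goes through verbatim: the identity $b_p(\vec v^{(r)}, n) - b_p(\vec v^{(r)}, n-1) = \codim_{\totalT(\vec v^{(r)}, n)}\totalT(\vec v^{(r)}, n-1) - \codim_{\totalT(\vec v^{(r)}+\vec e_p, n)}\totalT(\vec v^{(r)}+\vec e_p, n-1)$ is purely formal (it holds for any nested finite-dimensional subspaces), the telescoping using \cref{eq:subspace-inequality} is unchanged, and one is left bounding $\codim_{\totalT(\vec 0, n)}\totalT(\vec 0, n-1) = \dim R_{V,\le n} - \dim R_{V,\le n-1}$; this equals $\deg V\binom{n}{\dim V -1} + O_V(n^{\dim V - 2})$ by the Hilbert polynomial computation already recorded in the adapted last line of the proof of \cref{lem:Tlip2} stated just before \cref{lem:var-Tmono}. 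I do not anticipate a real obstacle here: the whole argument is a transcription of \cref{sec:poly-vanish} with polynomials on a plane replaced by regular functions on $V$. The only point needing genuine care is the division-by-$f$ manipulation in the analog of \cref{lem:Tlip1}, which relies on $R_V$ being a domain (our standing irreducibility assumption) and on $f$ being invertible in the local rings at the points of $\cP$ other than $q$; both are guaranteed by the choice of $f$, so the step is safe but should be written out explicitly.
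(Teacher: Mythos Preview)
Your proposal is correct and follows precisely the route the paper intends: the paper explicitly omits the proof, saying it mirrors \cref{lem:Tlip1,lem:Tlip2,lem:Tlip} with the single adaptation $\codim_{\totalT(\vec 0, n)} \totalT(\vec 0, n-1) = \dim R_{V,\le n} - \dim R_{V,\le n-1} = \deg V\binom{n}{\dim V -1} + O_V(n^{\dim V - 2})$, which is exactly what you do. Your extra care in the analog of \cref{lem:Tlip1}---noting that $R_V$ is a domain so multiplication by $f$ is injective, and that $f$ is a unit in the local ring at each point of $\cP\setminus\{q\}$---is well placed and makes the transcription rigorous.
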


\subsection{How the number of vanishing conditions varies with the handicap} \label{sec:vanish-handicap-var}

The lemmas in \cref{sec:vanish-handicap} can now be easily adapted to varieties. As in the previous subsection, we continue to focus our attention on a set of points $\cP$ on a variety $V$.

Given a handicap $\vec\alpha \in \ZZ^\cP$ (restricted to $V$), we define the vector $\vec{v}^{p,r}(\vec{\alpha})$ identically to \cref{sec:vanish-handicap-var}.
We have 
\[
\abs{\prioB_{p,V}^r(\vec \alpha, n)}
= b_p(\vec v, n)
= \codim_{\totalT(\vec v, n)} \totalT(\vec v + \vec e_p, n)
\quad \text{ with } 
\vec v = \vec v^{p,r}(\vec \alpha).
\]

We omit the proofs of the following lemmas, which mirror those of \cref{sec:vanish-handicap} but now using the lemmas from the previous subsection.

\begin{lemma}[Bounded domain] \label{lem:var-zero}
For each $n$ there is some $C_V(n)$ so that if $p\in \cP$ and $\vec\alpha \in \ZZ^\cP$ satisfy $ \alpha_p < \max_{q \in \cP} \alpha_q - C_V(n)$,
then $\abs{\prioB_{p,V}(\vec\alpha, n)}=0$. 
\end{lemma}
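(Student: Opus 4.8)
The plan is to transcribe the proof of \cref{lem:zero} from the flats setting, replacing the flat uniform‑boundedness input \cref{lem:Tzero} by its variety analogue \cref{lem:var-Tzero}. Fix $n$ and let $C_V(n)$ be the constant furnished by \cref{lem:var-Tzero}; this is exactly the constant claimed in the statement. Suppose $p \in \cP$ and $\vec\alpha \in \ZZ^\cP$ satisfy $\alpha_p < \max_{q \in \cP}\alpha_q - C_V(n)$, and fix a point $q \in \cP$ achieving $\alpha_q = \max_{q' \in \cP}\alpha_{q'}$; note that $q \ne p$ since $\alpha_p < \alpha_q$ strictly.

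The one computation to carry out is a lower bound on the $q$‑coordinate of $\vec v^{p,r}(\vec\alpha)$ for every $r \ge 0$. From the explicit formula \cref{eq:v-alpha} (dropping the possible $+1$, which only helps, and the truncation at $0$, which only helps), one gets $v^{p,r}_q(\vec\alpha) \ge r - \alpha_p + \alpha_q \ge \alpha_q - \alpha_p > C_V(n)$, the last inequality being the hypothesis. Hence $\vec v := \vec v^{p,r}(\vec\alpha)$ has a coordinate exceeding $C_V(n)$, so \cref{lem:var-Tzero} gives $\dim \totalT(\vec v, n) = 0$, and therefore $b_p(\vec v, n) = \codim_{\totalT(\vec v, n)}\totalT(\vec v + \vec e_p, n) = 0$. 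By the identity $\sabs{\prioB_{p,V}^r(\vec\alpha, n)} = b_p(\vec v^{p,r}(\vec\alpha), n)$ recalled in \cref{sec:vanish-handicap-var}, every $\prioB_{p,V}^r(\vec\alpha, n)$ is empty, and summing over $r \ge 0$ yields $\abs{\prioB_{p,V}(\vec\alpha, n)} = 0$.

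There is essentially no obstacle here: the substance of the lemma is inherited wholesale from \cref{lem:var-Tzero}, which itself rests only on the Krull intersection theorem together with the finite‑dimensionality of $R_{V,\le n}$. The only points requiring care are bookkeeping: (i) choosing $q$ to be a maximizer of $\alpha_{q'}$ so that the gap hypothesis becomes $\alpha_q - \alpha_p > C_V(n)$; (ii) noting that the crude estimate $v^{p,r}_q(\vec\alpha) \ge r - \alpha_p + \alpha_q$ holds regardless of the case split and the truncation in \cref{eq:v-alpha}; and (iii) observing that $C_V(n)$ is uniform in $\vec\alpha$ precisely because it comes from \cref{lem:var-Tzero}, which depends only on $V$, $n$, and $\cP$. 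With these in hand the argument is a direct adaptation of the flats case.
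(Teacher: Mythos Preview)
Your proof is correct and follows exactly the approach the paper intends: the paper omits the proof and says it mirrors that of \cref{lem:zero} with \cref{lem:var-Tzero} in place of \cref{lem:Tzero}, which is precisely what you have written out. The bookkeeping you flag (choosing $q$ to realize the maximum, the crude lower bound $v^{p,r}_q(\vec\alpha)\ge r-\alpha_p+\alpha_q$ from \cref{eq:v-alpha}, and the uniformity of $C_V(n)$) is handled correctly.
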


\begin{lemma}[Monotonicity] \label{lem:var-mono}
Let $n$ be a positive integer and $\vec\alpha^{(1)},\vec\alpha^{(2)} \in \ZZ^\cP$ be two handicaps. 
Suppose $p \in \cP$ satisfies 
$\alpha^{(1)}_p-\alpha^{(1)}_{p'}\le \alpha^{(2)}_p-\alpha^{(2)}_{p'}$ for all $p' \in \cP$.
Then $\abs{\prioB_{p,V}(\vec\alpha^{(1)},n)} \le \abs{\prioB_{p,V}(\vec\alpha^{(2)},n)}$.
\end{lemma}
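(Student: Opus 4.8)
The plan is to transcribe, essentially verbatim, the proof of the flat-case monotonicity lemma \cref{lem:mono}, swapping its polynomial-space inputs for the variety counterparts developed in \cref{sec:vanish-handicap-var}. By the variety analogue of \cref{eq:B-T} recorded there, $\abs{\prioB_{p,V}^r(\vec\alpha,n)} = b_p(\vec v^{p,r}(\vec\alpha),n)$, and $\prioB_{p,V}(\vec\alpha,n)$ is the disjoint union over $r \ge 0$ of the sets $\prioB_{p,V}^r(\vec\alpha,n)$; hence $\abs{\prioB_{p,V}(\vec\alpha,n)} = \sum_{r \ge 0} b_p(\vec v^{p,r}(\vec\alpha),n)$, and it suffices to prove $b_p(\vec v^{p,r}(\vec\alpha^{(1)}),n) \le b_p(\vec v^{p,r}(\vec\alpha^{(2)}),n)$ for every $r \ge 0$ and then sum (the sums are finite by \cref{lem:var-zero}, though the termwise inequality already does the job).

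Fix $r \ge 0$. First I would unwind the explicit formula \cref{eq:v-alpha}: for each $p' \in \cP$ one has $v^{p,r}_{p'}(\vec\alpha) = \max\{r - \alpha_p + \alpha_{p'} + \varepsilon_{p'},\, 0\}$, where $\varepsilon_{p'} \in \{0,1\}$ depends only on the fixed preassigned order on $\cJ$ and so is the same for $\vec\alpha^{(1)}$ and $\vec\alpha^{(2)}$. The hypothesis $\alpha^{(1)}_p - \alpha^{(1)}_{p'} \le \alpha^{(2)}_p - \alpha^{(2)}_{p'}$ rearranges to $r - \alpha^{(1)}_p + \alpha^{(1)}_{p'} \ge r - \alpha^{(2)}_p + \alpha^{(2)}_{p'}$, so monotonicity of $x \mapsto \max\{x,0\}$ gives $v^{p,r}_{p'}(\vec\alpha^{(1)}) \ge v^{p,r}_{p'}(\vec\alpha^{(2)})$ for every $p'$; and for $p' = p$ (where $\varepsilon_p = 0$) both sides equal $r$, so there is equality at the coordinate indexed by $p$. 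Thus $\vec v^{p,r}(\vec\alpha^{(1)}) \ge \vec v^{p,r}(\vec\alpha^{(2)})$ coordinatewise with equality at $p$, and \cref{lem:var-Tmono} (monotonicity of $b_p$ for regular functions on $V$) yields the desired $b_p(\vec v^{p,r}(\vec\alpha^{(1)}),n) \le b_p(\vec v^{p,r}(\vec\alpha^{(2)}),n)$. Summing over $r$ completes the argument.

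I do not expect any genuine obstacle here. The only variety-specific input is the appeal to \cref{lem:var-Tmono}, which is already established (and which in turn is proved exactly as \cref{lem:Tmono}: $\totalT(\vec v^{p,r}(\vec\alpha^{(1)}),n) \subseteq \totalT(\vec v^{p,r}(\vec\alpha^{(2)}),n)$ as subspaces of $R_{V,\le n}$, and the rank of the order-$r$ evaluation map at $p$ can only drop when restricted to a subspace). The one point deserving a careful sentence is the sign bookkeeping in \cref{eq:v-alpha}, ensuring that the hypothesis on handicap \emph{differences} $\alpha_p - \alpha_{p'}$ translates into the correct coordinatewise comparison of the vanishing-order profiles $\vec v^{p,r}$; once that is verified, the rest is immediate.
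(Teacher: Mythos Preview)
Your proposal is correct and follows exactly the approach the paper intends: the paper explicitly omits the proof, saying it mirrors that of \cref{lem:mono} with the variety lemmas in place of the polynomial-space ones, and that is precisely what you have written out. Your added detail verifying the sign bookkeeping in \cref{eq:v-alpha} and the equality at the $p$-coordinate is a faithful unpacking of the one-line claim in the proof of \cref{lem:mono}.
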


\begin{lemma}[Lipschitz continuity]\label{lem:var-lip}
Let $p \in \cP$. Let $\vec\alpha^{(1)},\vec\alpha^{(2)} \in \ZZ^\cP$.
Then
\begin{multline*}
\abs{
\sabs{\prioB_{p, V}(\vec\alpha^{(1)},n)}
- \sabs{\prioB_{p, V}(\vec\alpha^{(2)},n)}
}
\\
\le 
\paren{\deg V \binom{n}{\dim V -1} + O_V(n^{\dim V - 2})}\sum_{p'\in \cP }\abs{(\alpha^{(1)}_{p'}-\alpha^{(1)}_{p})-(\alpha^{(2)}_{p'}-\alpha^{(2)}_{p})}.	
\end{multline*}
\end{lemma}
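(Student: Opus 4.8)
The plan is to mimic the proof of \cref{lem:lip} verbatim, substituting the variety-specific estimates established in \cref{sec:vanish-handicap-var} for their plane counterparts. First I would reduce the two-handicap comparison to a telescoping sum of single-step changes. Starting from $\vec\alpha = \vec\alpha^{(1)}$, perform a sequence of moves, each changing the handicap value at a single point $p' \neq p$ by exactly $1$, so that the vector $(\alpha_p - \alpha_{p'})_{p' \in \cP}$ is carried to $(\alpha_p^{(2)} - \alpha_{p'}^{(2)})_{p' \in \cP}$ in exactly $\sum_{p' \in \cP}\abs{(\alpha^{(1)}_p-\alpha^{(1)}_{p'})-(\alpha^{(2)}_p-\alpha^{(2)}_{p'})}$ moves. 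Since $\sabs{\prioB_{p,V}(\vec\alpha,n)}$ depends only on the differences $(\alpha_p - \alpha_{p'})_{p'}$ through the priority order, it suffices by the triangle inequality to prove, for each $\vec\alpha \in \ZZ^\cP$ and each $q \neq p$,
\[
0 \le \sabs{\prioB_{p,V}(\vec\alpha,n)} - \sabs{\prioB_{p,V}(\vec\alpha + \vec e_q,n)} \le \deg V \binom{n}{\dim V -1} + O_V(n^{\dim V - 2}).
\]

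For the lower bound, I would invoke \cref{lem:var-mono}: moving from $\vec\alpha$ to $\vec\alpha + \vec e_q$ does not decrease any difference $\alpha_p - \alpha_{p'}$ (it leaves them all fixed except $\alpha_p - \alpha_q$, which decreases, but applied with roles swapped — more precisely, the hypothesis $\alpha_p - \alpha_{p'} \le (\alpha+\vec e_q)_p - (\alpha + \vec e_q)_{p'}$ holds for all $p'$ since the left side equals the right side when $p' \neq q$ and is smaller when $p' = q$), so monotonicity gives $\sabs{\prioB_{p,V}(\vec\alpha,n)} \ge \sabs{\prioB_{p,V}(\vec\alpha + \vec e_q,n)}$. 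For the upper bound, I would use the identity $\abs{\prioB_{p,V}(\vec\alpha,n)} = \sum_{r\ge 0}\abs{\prioB_{p,V}^r(\vec\alpha,n)} = \sum_{r\ge 0} b_p(\vec v^{p,r}(\vec\alpha),n)$ from the displayed equation in \cref{sec:vanish-handicap-var}, together with the observation (read off from the explicit formula \cref{eq:v-alpha}, which carries over unchanged) that there is a threshold $r_0$ with $\vec v^{p,r}(\vec\alpha + \vec e_q) = \vec v^{p,r}(\vec\alpha)$ for $r < r_0$ and $\vec v^{p,r}(\vec\alpha + \vec e_q) = \vec v^{p,r}(\vec\alpha) + \vec e_q$ for $r \ge r_0$. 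The terms with $r < r_0$ cancel in the difference, and the tail $\sum_{r \ge r_0} [b_p(\vec v^{p,r}(\vec\alpha),n) - b_p(\vec v^{p,r}(\vec\alpha) + \vec e_q,n)]$ is bounded by $\deg V \binom{n}{\dim V - 1} + O_V(n^{\dim V - 2})$ by \cref{lem:var-Tlip}, applied to the coordinate-wise nondecreasing sequence $\vec v^{p,r_0}(\vec\alpha) \le \vec v^{p,r_0+1}(\vec\alpha) \le \cdots$, which is strictly increasing in the $p$-coordinate (since $(p,0) \prec (p,1) \prec \cdots$).

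I do not expect a genuine obstacle here: every ingredient is already in place, and the argument is structurally identical to the flat case. The only point requiring mild care is bookkeeping the threshold $r_0$ and checking that the sequence fed into \cref{lem:var-Tlip} genuinely satisfies its hypotheses (coordinate-wise monotone, strictly increasing at $p$) — this is immediate from \cref{eq:v-alpha} but should be stated. The substantive content — that incrementing one handicap changes the count of vanishing conditions at $p$ by at most $\dim R_{V,\le n} - \dim R_{V,\le n-1}$ — is exactly \cref{lem:var-Tlip}, which in turn rests on the variety analogues of \cref{lem:Tlip1,lem:Tlip2} and ultimately on the relative-codimension inequality \cref{eq:subspace-inequality} and the Hilbert-polynomial estimate for $\dim R_{V,\le n}$.
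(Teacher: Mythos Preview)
Your proposal is correct and follows exactly the route the paper intends: the paper explicitly omits the proof, saying it mirrors that of \cref{lem:lip} with the variety-specific lemmas of \cref{sec:vanish-handicap-var} substituted in, and your write-up does precisely this telescoping reduction followed by \cref{lem:var-mono} and \cref{lem:var-Tlip}. One minor slip: in your parenthetical verifying the monotonicity hypothesis you have the inequality backwards---the correct check is $(\vec\alpha+\vec e_q)_p - (\vec\alpha+\vec e_q)_{p'} \le \alpha_p - \alpha_{p'}$ for all $p'$ (equality for $p'\ne q$, strict for $p'=q$), which via \cref{lem:var-mono} yields $\sabs{\prioB_{p,V}(\vec\alpha+\vec e_q,n)} \le \sabs{\prioB_{p,V}(\vec\alpha,n)}$ as you want.
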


\subsection{Joints configuration} \label{sec:config}

We are ready to discuss joints of varieties. 
Here we set some notation and definitions.

By a \emph{$(k_1, \dots, k_r; m_1, \dots, m_r)$-joints configuration} (or just a \emph{joints configuration} for short) we mean a tuple $(\cJ, \cV_1, \dots, \cV_d)$ as in \cref{thm:main}, namely that each $\cV_i$ is a finite multiset of $k_i$-dimensional varieties in $\FF^d$, where $d = m_1k_1 + \cdots + m_rk_r$,
and $\cJ$ is the set of joints formed by choosing $m_i$ elements from $\cV_i$ for each $i = 1, \dots, r$.
We write $\cM(p)$ for the multiset of $r$-tuples $(\cS_1, \dots, \cS_r)$, where each $\cS_i$ is an unordered $m_i$-tuple of elements of $\cS_i$ and such that together these $s = m_1 + \cdots + m_r$ varieties form a joint at $p$. The quantity $M(p)$ from \cref{thm:main} is then the cardinality of $\cM(p)$. 
We have $M(p) > 0$ at each $p \in \cJ$.

\subsection{Vanishing lemma}

Before stating the analog to \cref{lem:vanishing}, let us first note the following observation about how high order directional derivatives of several varieties interact at a joint.

\begin{lemma}\label{lem:hasse-vanishing}
Let $p$ be a joint formed by varieties $V_1,\ldots,V_s$.
Suppose $g\in\FF[x_1,\ldots,x_d]$ vanishes to order exactly $r$ at $p$ (as a polynomial function on $\FF^d$).
Then there exist $r_1, \dots r_s \in \ZZ_{\ge 0}$ with $r_1+\cdots+r_s=r$ and $D_1\in \totalD^{r_1}_{p,V_1},\ldots,D_s\in \totalD^{r_s}_{p,V_s}$ such that
\[\left(D_1D_2\cdots D_s g\right)(p) \neq 0.\]
\end{lemma}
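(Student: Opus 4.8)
I want to reduce the statement to a linear-algebra fact about how the "jets along the $V_i$" sit inside the full space of $r$-jets of polynomials at $p$. After an affine change of coordinates I may assume $p$ is the origin. For each variety $V_i$, pick local coordinates so that (say, after a further permutation of the ambient coordinates) the tangent space $T_pV_i$ is the span of a size-$k_i$ subset $S_i \subseteq \{x_1,\dots,x_d\}$ of coordinate directions; because $p$ is a joint, the tangent spaces are independent and spanning, so we may arrange $S_1,\dots,S_s$ to be a \emph{partition} of $\{x_1,\dots,x_d\}$. Writing $\widehat g_i$ for the power series representation of $g$ as a regular function on $V_i$ in these local coordinates, the operators in $\totalD_{p,V_i}^{r_i}$ extract the coefficients of the degree-$r_i$ homogeneous part of $\widehat g_i$. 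So the claim "$D_1\cdots D_s g(p)\neq 0$ for some $D_i\in\totalD_{p,V_i}^{r_i}$ with $\sum r_i = r$" is equivalent to showing that the composite operator, applied to $g$ and evaluated at $0$, reads off a nonzero coefficient of $g$ that is not killed by replacing $g$ with something vanishing to higher order.

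**Key steps.** First, by \cref{eq:D-top}, each $D_{p,V_i}^{\vec\gamma_i}$ is $\Hasse^{\vec\gamma_i}$ (supported on the coordinates in $S_i$) plus a linear combination of $\Hasse^{\vec\omega}$ of \emph{strictly smaller total order}. Hence a product $D_{p,V_1}^{\vec\gamma_1}\cdots D_{p,V_s}^{\vec\gamma_s}$ with $|\vec\gamma_i| = r_i$ and $\sum r_i = r$ equals $\Hasse^{\vec\gamma}$, where $\vec\gamma$ is the concatenation of the $\vec\gamma_i$ across the partition $S_1\sqcup\cdots\sqcup S_s$ (so $|\vec\gamma| = r$), plus a linear combination of $\Hasse^{\vec\omega}$ with $|\vec\omega| < r$. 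Second, since $g$ vanishes to order \emph{exactly} $r$ at $p = 0$, all $\Hasse^{\vec\omega}g(0)$ with $|\vec\omega| < r$ vanish, while $\Hasse^{\vec\gamma}g(0)$ — the coefficient of $x^{\vec\gamma}$ in $g$ — is nonzero for at least one multi-index $\vec\gamma$ with $|\vec\gamma| = r$. Fix such a $\vec\gamma$; because $S_1,\dots,S_s$ partition the coordinates, $\vec\gamma$ splits uniquely as $(\vec\gamma_1,\dots,\vec\gamma_s)$ with $\vec\gamma_i$ supported on $S_i$, and we set $r_i := |\vec\gamma_i|$, so $\sum_i r_i = r$. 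Third, with $D_i := D_{p,V_i}^{\vec\gamma_i}\in\totalD_{p,V_i}^{r_i}$, the product $D_1\cdots D_s$ equals $\Hasse^{\vec\gamma} + (\text{lower-order Hasse derivatives})$, so evaluating at $0$ gives $(D_1\cdots D_s g)(0) = \Hasse^{\vec\gamma}g(0) + 0 \neq 0$, as desired. (One should check the derivatives commute and compose cleanly across disjoint coordinate blocks, which is immediate from the product formula $\Hasse^{\vec\alpha}\Hasse^{\vec\beta} = \binom{\vec\alpha+\vec\beta}{\vec\alpha}\Hasse^{\vec\alpha+\vec\beta}$ and the fact that operators supported on disjoint variable sets commute.)

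**Main obstacle.** The one genuinely delicate point is the coordinate setup: ensuring that the local-coordinate systems for the different $V_i$ can be chosen \emph{simultaneously} so that their tangent directions form a partition of the ambient coordinates, and verifying that \cref{eq:D-top} — stated for a single variety with tangent space equal to the first $k$ coordinate directions — still gives the "leading term $\Hasse^{\vec\gamma}$ plus strictly lower order" decomposition after these choices. This uses only that the joint condition makes $T_pV_1,\dots,T_pV_s$ independent and spanning (so a change of basis puts them into coordinate position) together with the fact, noted after \cref{eq:taylor}, that Hasse derivatives transform affinely; once the coordinates are arranged, everything else is the bookkeeping above. A minor secondary point is that the $h_i$ for each $V_i$ have no constant or linear terms, which is exactly what powers \cref{eq:D-top}; this holds because $p$ is a regular point and we have chosen the tangent directions as the local coordinates.
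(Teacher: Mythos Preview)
Your proof is correct and follows essentially the same approach as the paper: after a single affine change of coordinates aligning the tangent spaces $T_pV_i$ with disjoint coordinate blocks, pick a multi-index $\vec\gamma$ with $|\vec\gamma|=r$ and nonzero coefficient in $g$, split it along the blocks to get $r_i$'s, and use \cref{eq:D-top} so that $D_{p,V_1}^{\vec\gamma_1}\cdots D_{p,V_s}^{\vec\gamma_s} = \Hasse^{\vec\gamma} + (\text{Hasse derivatives of total order} < r)$, whence evaluation at $p$ gives $\Hasse^{\vec\gamma}g(0)\neq 0$. The only cosmetic difference is that the paper phrases the conclusion as ``$D_1\cdots D_s g = c + \text{higher order terms}$'' rather than decomposing the operator itself, but the computation is identical.
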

\begin{proof}
Let $k_i = \dim V_i$ for each $i$.
By an affine change of coordinates, suppose that $p$ is at the origin, $V_1$ is tangent to the first $k_1$ coordinate vectors, $V_2$ tangent to the next $k_2$ coordinate vectors, and so on.
Let $c x_1^{\gamma_1} \cdots x_d^{\gamma_d}$, $c \in \FF\setminus \{0\}$, be a monomial of lowest degree in $g$. 
Since $g$ vanishes to order $r$ at $p$, we have $\gamma_1 + \cdots + \gamma_d = r$.
Let $r_1$ be the sum of the first $k_1$ $\gamma_i$'s, 
$r_2$ the sum of the next $k_2$ $\gamma_i$'s, and so on.
By \cref{eq:D-top}, there exist $D_i\in \totalD^{r_i}_{p,V_i}$ of the form
\begin{align*}
	D_1 &= \Hasse^{(\gamma_1, \dots, \gamma_{k_1}, 0, \dots, 0)} + \text{lower order derivatives},\\
	D_2 &= \Hasse^{(0, \dots, 0, \gamma_{k_1+1}, \dots, \gamma_{k_1+k_2}, 0, \dots, 0)} + \text{lower order derivatives},\\
	&\dots.
\end{align*}
Then $D_1D_2\cdots D_s g = c + \text{higher order terms}$, which evaluates to $c\ne 0$ at $p = 0$.
\end{proof}

The next statement is analogous to the vanishing lemma for planes in \cref{lem:vanishing}.
The proof is analogous, but we write it out explicitly here since it is a critical step of the argument.

\begin{lemma}\label{lem:carbery-vanishing}
	Let $(\cJ, \cV_1,\ldots, \cV_k)$ be a $(k_1, \dots, k_r; m_1, \dots, m_r)$-joints configuration. 
	Let $s = m_1 + \cdots + m_r$ and $d = m_1k_1 + \cdots + m_rk_r$.
	Fix a handicap $\vec \alpha$ and its associated priority order. 
	Fix a positive integer $n$. 
	Choose $\prioD_{p,V}$ as earlier.
	For each $p \in \cJ$, fix a choice $V_1(p), V_2(p), \dots, V_s(p)$ of varieties that form a joint at $p$, and of which exactly $m_i$ of them come from $\cV_i$ for each $i = 1, \dots, r$.

	Then for every nonzero polynomial $g \in \FF[x_1, \dots, x_d]$ of degree at most $n$, one has 	
	$D_1\cdots D_s g(p) \ne 0$ 
	for some joint $p \in \cJ$  
	and some $D_1 \in \prioD_{p, V_1(p)}$, \dots, $D_s \in \prioD_{p, V_s(p)}$.
\end{lemma}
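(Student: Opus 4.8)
The plan is to transcribe the proof of \cref{lem:vanishing} almost word for word, with \cref{lem:hasse-vanishing} taking over the role played there by the ``three planes not in a hyperplane'' hypothesis, and with \cref{eq:D-top} supplying the order-of-vanishing bookkeeping that was transparent in the flat case.

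First I would argue by contradiction: suppose $g \in \FF[x_1,\dots,x_d]_{\le n}$ is nonzero and $D_1\cdots D_s g(p) = 0$ for every joint $p \in \cJ$ and every choice $D_1 \in \prioD_{p,V_1(p)}, \dots, D_s \in \prioD_{p,V_s(p)}$. Among all joints $p \in \cJ$, choose one minimizing $(p, v_p(g))$ in the priority order $\prec$, where $v_p(g)$ is the (finite, since $g \ne 0$) order of vanishing of $g$ at $p$ as a polynomial on $\FF^d$. Apply \cref{lem:hasse-vanishing} at this $p$ to get $r_1 + \cdots + r_s = v_p(g)$ and operators $D_i \in \totalD_{p,V_i(p)}^{r_i}$ with $D_1\cdots D_s g(p) \ne 0$; among all such tuples (including the choice of the $r_i$), fix one for which $\#\{\, i : D_i \in \prioD_{p,V_i(p)} \,\}$ is as large as possible. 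The contradiction hypothesis forces this number to be strictly less than $s$, so after relabeling I may assume $D_1 \notin \prioD_{p,V_1(p)}$.

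The heart of the argument is then the propagation step. Using that $\prec$ is invariant under shifting both second coordinates by a common amount, for any $p' \in \cJ \cap V_1(p)$ and $r' \ge 0$ with $(p',r') \prec (p,r_1)$ one obtains $(p', r' + r_2 + \cdots + r_s) \prec (p, v_p(g)) \preceq (p', v_{p'}(g))$, hence $r' + r_2 + \cdots + r_s < v_{p'}(g)$. By \cref{eq:D-top}, each $D_i$ is a linear combination of Hasse operators of order at most $r_i$, and (again by \cref{eq:D-top}, together with the translation-equivariance of Hasse derivatives coming from \cref{eq:taylor}) applying such an operator to a polynomial vanishing to order $k$ at $p'$ produces one vanishing to order at least $k - r_i$ at $p'$; composing, $D_2\cdots D_s g$ vanishes at $p'$ to order at least $v_{p'}(g) - (r_2 + \cdots + r_s) > r'$. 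Since every element of $\totalD_{p',V_1(p)}^{r'}$ is likewise a combination of Hasse operators of order at most $r'$, this gives $D(D_2\cdots D_s g)(p') = 0$ for all $D \in \totalD_{p',V_1(p)}^{r'}$; equivalently, the class of $D_2\cdots D_s g$ in $R_{V_1(p),\le n}$ (well defined since Hasse derivatives do not raise degree) is annihilated by every functional in $\totalB_{p',V_1(p)}^{r'}(n)$, and therefore by the whole basis $\bigcup_{(p',r')\prec(p,r_1)}\prioB_{p',V_1(p)}^{r'}$ of $\sum_{(p',r')\prec(p,r_1)}\totalB_{p',V_1(p)}^{r'}$.

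Finally I would close the loop exactly as in \cref{lem:vanishing}: the functional $g''\mapsto D_1 g''(p)$ lies in $\totalB_{p,V_1(p)}^{r_1}(n) \subseteq \sum_{(p',r')\preceq(p,r_1)}\totalB_{p',V_1(p)}^{r'}$ and does \emph{not} annihilate $D_2\cdots D_s g$ (because $D_1\cdots D_s g(p)\ne 0$). Since $\bigcup_{(p',r')\preceq(p,r_1)}\prioB_{p',V_1(p)}^{r'}$ is a basis of that larger sum while the sub-collection with $(p',r')\prec(p,r_1)$ does annihilate $D_2\cdots D_s g$, some element of $\prioB_{p,V_1(p)}^{r_1}$ fails to annihilate it; that is, $D(D_2\cdots D_s g)(p)\ne 0$ for some $D \in \prioD_{p,V_1(p)}^{r_1}$. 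Replacing $D_1$ by $D$ gives a tuple with a strictly larger value of $\#\{\, i : D_i \in \prioD_{p,V_i(p)} \,\}$, contradicting the maximality. I expect the only genuine subtlety — the one place where varieties rather than flats really matter — to be the order-of-vanishing bookkeeping in the propagation step: derivative operators along a variety are no longer pure partial derivatives in tangent coordinates but carry correction terms, and one must invoke \cref{eq:D-top} to know these corrections involve only Hasse operators of bounded order, so that applying $D_2,\dots,D_s$ drops the order of vanishing at each $p'$ by at most $r_2+\cdots+r_s$. Everything else is a direct translation of the planar argument.
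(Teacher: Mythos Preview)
Your proposal is correct and follows the paper's proof essentially verbatim: same contradiction setup, same minimizing choice of $(p,v_p(g))$, same invocation of \cref{lem:hasse-vanishing}, same maximality trick, and same closing argument via the basis property of the $\prioB$'s. The one place you are more explicit than the paper is the propagation step: the paper writes only ``by the definition of vanishing order'' to conclude $DD_2\cdots D_s g(p')=0$, whereas you spell out via \cref{eq:D-top} that each $D_i$ is a linear combination of Hasse operators of total order at most $r_i$, so that the composite $DD_2\cdots D_s$ has total order at most $r'+r_2+\cdots+r_s<v_{p'}(g)$---this is exactly the justification the paper's one-liner is implicitly using, and it is worth making explicit since (as you note) it is the only point where the variety case requires more care than the flat case.
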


\begin{proof}
	Suppose, on the contrary, 
	that there were some nonzero polynomial $g \in \FF[x_1, \dots, x_d]$ of degree at most $n$
	such that
	$D_1\cdots D_s g(p) = 0$ 
	for every joint $p \in \cJ$ and $D_1 \in \prioD_{p, V_1}$, \dots, $D_s \in \prioD_{p, V_s}$,
	where $V_1, V_2, \dots, V_s$ are any varieties that form a joint at $p$ and exactly $m_i$ of them come from $\cV_i$ for each $i = 1, \dots, r$, 
	
	Choose $p \in \cJ$ to minimize $(p, v_p(g))$ under $\prec$, where $v_p(g)$ is the order vanishing of $g$ at $p$. 
	
	Since $g$ vanishes to order exactly $v_p(g)$ at $p$, by \cref{lem:hasse-vanishing},  there exist 
	$D_1 \in \totalD_{p, V_1(p)}^{r_1}$, \dots, $D_s \in \totalD_{p, V_s(p)}^{r_s}$
	with $D_1D_2\cdots D_s g(p) \ne 0$
	and $r_1 + \cdots + r_s = v_p(g)$. 
	Among all choices of $D_1, \dots, D_s$ (including choices of $r_1, \dots, r_s$), choose ones so that $|\{ i \in [s] : D_i \in \prioD_{p, V_i(p)}\}|$ is maximized.	
	By the assumption at the beginning of the proof, one must have $D_i \notin \prioD_{p, V_i}$ for some $i \in [s]$. 
	Relabeling if necessary, assume that $D_1 \notin \prioD_{p, V_1(p)}$. (Here we are using that derivatives commute.)
	
	Suppose $p' \in V_1(p) \cap \cJ$ and $r' \in \ZZ_{\ge 0}$ satisfy $(p', r') \prec (p, r_1)$. 
	We get $(p', r' + r_2 +  \cdots + r_s) \prec (p, r_1 + r_2 + \cdots + r_s) = (p, v_p(g))$. 
	By the choice of $p$, we have $(p, v_p(g)) \preceq (p', v_{p'}(g))$.
	Thus $(p', r' + r_2 + \cdots + r_s) \prec (p', v_{p'}(g))$, and hence $r' + r_2 + \cdots + r_s < v_{p'}(g)$.
	If follows that $D D_2 \cdots D_s g (p') = 0$ for all $D \in \totalD_{p', V_1(p)}^{r'}$ by the definition of vanishing order.
	
	From the above paragraph we deduce that $D_2\cdots D_s g(p')$ lies in the common kernel of $\totalB_{p', V_1(p)}^{r'}$ ranging over all $(p',r') \in (V_1(p) \cap \cJ) \times \ZZ_{\ge 0}$ with $(p',r') \prec (p,r_1)$.
	Since $D_1D_2\cdots D_s g(p) \ne 0$, we deduce that $D_2\cdots D_s g$ does not lie in the common kernel of $\prioB_{p,V_1(p)}^{r_1}$, i.e., there is some $D \in \prioD_{p, V_1(p)}^{r_1}$ with $DD_2\cdots D_s g(p) \ne 0$. But this $D$ contradicts the earlier assumption that the choice of $D_1, \dots, D_s$ maximizes $|\{ i \in [s]: D_i \in \prioD_{p, V_i(p)}\}|$.
\end{proof}

The next lemma is a consequence of parameter counting. Its proof is identical to that of \cref{lem:param-ineq} except that we now apply \cref{lem:carbery-vanishing}.

\begin{lemma}\label{cor:car-param-ineq}
Assume the same setup as \cref{lem:carbery-vanishing}. We have
\[
\sum_{p\in\cJ} \prod_{i=1}^s \abs{\prioD_{p,V_i(p)}(\vec{\alpha},n)}\geq \binom{n+d}{d}.
\]
\end{lemma}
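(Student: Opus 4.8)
The plan is to mimic the proof of \cref{lem:param-ineq} essentially verbatim, substituting \cref{lem:carbery-vanishing} for the planar vanishing lemma used there. Write $A := \sum_{p\in\cJ} \prod_{i=1}^s \abs{\prioD_{p,V_i(p)}(\vec{\alpha},n)}$ for the left-hand side, and recall that $\binom{n+d}{d} = \dim \FF[x_1,\ldots,x_d]_{\le n}$. The first step is to exhibit a system of exactly $A$ homogeneous linear constraints on $g \in \FF[x_1,\ldots,x_d]_{\le n}$: for each joint $p \in \cJ$ and each tuple $(D_1, \dots, D_s) \in \prioD_{p,V_1(p)}(\vec\alpha,n) \times \cdots \times \prioD_{p,V_s(p)}(\vec\alpha,n)$, impose $D_1\cdots D_s g(p) = 0$. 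Each of these is a linear condition on the coefficient vector of $g$, since every $D_i$ is by definition a linear combination of Hasse derivative operators, composition of such operators is linear, and evaluation at $p$ is linear; and there are precisely $A$ of them by construction.

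Next I would argue by contradiction. If $A < \binom{n+d}{d}$, then the linear system above consists of fewer equations than the dimension of the ambient space $\FF[x_1,\ldots,x_d]_{\le n}$, so parameter counting (Step 2 of the outline in \cref{sec:motivation}) produces a nonzero polynomial $g$ of degree at most $n$ that satisfies all of them. But such a $g$ directly contradicts \cref{lem:carbery-vanishing}, whose conclusion is exactly that for every nonzero $g$ of degree at most $n$ there exist a joint $p \in \cJ$ and operators $D_i \in \prioD_{p, V_i(p)}(\vec\alpha,n)$ with $D_1 \cdots D_s g(p) \ne 0$. Hence $A \ge \binom{n+d}{d}$, which is the claim.

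I do not expect any genuine obstacle at this stage: all the difficulty has been front-loaded into \cref{lem:carbery-vanishing} (which in turn relies on \cref{lem:hasse-vanishing} to produce a nonvanishing iterated derivative at the joint minimal in the priority order). The only point that warrants a line of care is the verification that $g \mapsto D_1\cdots D_s g(p)$ is a well-defined linear functional on $\FF[x_1,\ldots,x_d]_{\le n}$, which is immediate from the construction of the spaces $\totalD_{p,V}^r$ as spans of compositions of Hasse derivatives followed by point evaluation.
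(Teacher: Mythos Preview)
Your proposal is correct and follows essentially the same approach as the paper, which explicitly states that the proof is identical to that of \cref{lem:param-ineq} except for substituting \cref{lem:carbery-vanishing} in place of \cref{lem:vanishing}.
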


\subsection{Choosing the handicaps} We say that a joints configuration $(\cJ,\cV_1,\ldots,\cV_r)$ is \emph{connected} if the following graph is connected: the vertex set is $\cJ$, with $p,p'\in \cJ$ adjacent if there is some $V\in \cV_1\cup\cdots\cup \cV_r$ containing both $p$ and $p'$.

\begin{lemma}\label{lem:carbery-handicaps} 
Let $n$ be a positive integer and $(\cJ, \cV_1, \dots, \cV_k)$ be a connected $(k_1, \dots, k_r; m_1, \dots, m_r)$-joints configuration.
Let $\omega(p)$ be a positive real for each $p\in \cJ$. 
Then there exists a choice of handicap $\vec \alpha \in \ZZ^\cJ$ such that
\[
	\frac{1}{\omega(p)}\left[\prod_{(\cS_1, \dots, \cS_r) \in \cM(p)}\prod_{V \in \cS_1 \cup \cdots \cup \cS_r}\frac{\abs{\prioD_{p,V}(\vec{\alpha},n)}}{\binom{n}{\dim V}}\right]^{1/M(p)}
\]
lies in some common interval of length $o_{\cJ, \cV_1, \dots, \cV_k,\omega;n\to\infty}(1)$ as we range over $p\in \cJ$. Here the notation means that the length of the interval tends to zero as $n$ goes to infinity but the rate may depend on the joints configuration and $\omega$.
\end{lemma}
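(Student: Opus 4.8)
The plan is to carry the proof of \cref{lem:handicaps} over essentially line by line, replacing $\binom{n+2}{2}$ throughout by $\dim R_{V,\le n}=\deg V\binom{n}{\dim V}+O_V(n^{\dim V-1})$ and letting \cref{lem:var-zero,lem:var-mono,lem:var-lip} play the roles of \cref{lem:zero,lem:mono,lem:lip}. Write, for $p\in\cJ$,
\[
W_p(\vec\alpha) := \frac{1}{\omega(p)}\left[\prod_{(\cS_1,\dots,\cS_r)\in\cM(p)}\ \prod_{V\in\cS_1\cup\cdots\cup\cS_r}\frac{\abs{\prioD_{p,V}(\vec\alpha,n)}}{\binom{n}{\dim V}}\right]^{1/M(p)},
\]
so the claim is that all $W_p(\vec\alpha)$ lie in a common interval of length $O(n^{-1})$. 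Two elementary facts will be used repeatedly: each factor $\abs{\prioD_{p,V}(\vec\alpha,n)}/\binom{n}{\dim V}$ is a nonnegative number bounded above by $\dim R_{V,\le n}/\binom{n}{\dim V}=\deg V+O(1/n)$ (in particular $\abs{\prioD_{p,V}(\vec\alpha,n)}$ is a nonnegative integer ranging over a finite set), and $W_p$ is a nondecreasing function of each such factor, so \cref{lem:var-mono} gives monotonicity of $W_p$ in the handicap exactly as in \cref{lem:mono}.

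The one genuinely new ingredient I would record first is a \emph{positivity} input from parameter counting. Apply \cref{cor:car-param-ineq} with the tracked tuple $(V_1(p),\dots,V_s(p))$ chosen, for each $p$, to be one in $\cM(p)$ minimizing $\prod_{V}\abs{\prioD_{p,V}(\vec\alpha,n)}$; since each summand is at most $\prod_i\dim R_{V_i(p),\le n}=O(n^d)$ and there are $\abs{\cJ}$ of them, one gets $\max_p\bigl(\min_{(\cS_1,\dots,\cS_r)\in\cM(p)}\prod_{V\in\cS_1\cup\cdots\cup\cS_r}\abs{\prioD_{p,V}}/\binom{n}{\dim V}\bigr)\ge c_0$ for a constant $c_0$ depending only on the configuration; as a geometric mean is at least a minimum, this yields $\max_p W_p(\vec\alpha)\ge c_*>0$ for a constant $c_*=c_*(\cJ,\cV_1,\dots,\cV_r,\omega)$, \emph{uniformly in $\vec\alpha$ and $n$}. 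Then, as in \cref{lem:handicaps}, the finiteness of the range of the $\abs{\prioD_{p,V}}$ shows $(W_p(\vec\alpha))_{p\in\cJ}$ takes only finitely many values, so I fix $\vec\alpha$ for which the descending rearrangement $W_{p_1}(\vec\alpha)\ge\cdots\ge W_{p_{\abs{\cJ}}}(\vec\alpha)$ is lexicographically smallest; it then suffices to prove $W_{p_i}(\vec\alpha)-W_{p_{i+1}}(\vec\alpha)\le C'/n$ for every $i$, for some constant $C'$, since summing bounds $\max-\min$ by $(\abs{\cJ}-1)C'/n$.

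Suppose not, let $t$ be least with $W_{p_t}(\vec\alpha)-W_{p_{t+1}}(\vec\alpha)>C'/n$, and set $\vec v=\vec e_{p_1}+\cdots+\vec e_{p_t}$, $\vec\alpha'=\vec\alpha-\vec v$. From $W_{p_1}(\vec\alpha)-W_{p_t}(\vec\alpha)\le\abs{\cJ}C'/n$ and the positivity input, $W_{p_i}(\vec\alpha)\ge c_*/2$ for all $i\le t$ once $n$ is large, so every factor of $W_{p_i}$ with $i\le t$ is bounded below by a positive constant. By \cref{lem:var-lip} together with $\binom{n}{\dim V-1}/\binom{n}{\dim V}=O(1/n)$, passing to $\vec\alpha'$ moves each factor by $O(1/n)$ while keeping all factors bounded, so $W_p^{M(p)}$ — a polynomial of configuration-bounded degree in bounded quantities — moves by $O(1/n)$; on $p_1,\dots,p_t$ the factors are bounded below, so $x\mapsto x^{1/M(p)}$ is Lipschitz there and $\abs{W_{p_i}(\vec\alpha')-W_{p_i}(\vec\alpha)}\le C'/(4n)$ (enlarging $C'$), while for $j>t$ a short case split — either all factors of $W_{p_j}$ exceed a small constant $\delta_0$, giving change $O(1/n)$, or one is below $\delta_0$, forcing both $W_{p_j}(\vec\alpha)$ and $W_{p_j}(\vec\alpha')$ below a constant $\eta(\delta_0)<c_*/8$ — shows $W_{p_j}(\vec\alpha')<W_{p_t}(\vec\alpha')$. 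Combined with \cref{lem:var-mono} (which gives $W_{p_i}(\vec\alpha')\le W_{p_i}(\vec\alpha)$ for $i\le t$ and $\ge$ for $i>t$), the top $t$ entries of the descending rearrangement of $(W_p(\vec\alpha'))_p$ are again indexed by $\{p_1,\dots,p_t\}$ and are coordinatewise $\le$ the old ones; and since $\sum_{p\in\cJ\cap V}\abs{\prioD_{p,V}(\vec\alpha,n)}=\dim R_{V,\le n}$ is independent of $\vec\alpha$ by \cref{eq:sum-var}, any change in $(W_p(\vec\alpha))_p$ forces a strict decrease of some $\abs{\prioD_{p_i,V}}$ with $i\le t$, hence (as $W_{p_i}(\vec\alpha)>0$) a strict decrease of $W_{p_i}$ — so the rearrangement drops strictly in lexicographic order, contradicting minimality. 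Hence $W_p(\vec\alpha-\vec v)=W_p(\vec\alpha)$ for all $p$, and iterating, $W_p(\vec\alpha-c\vec v)=W_p(\vec\alpha)$ for every $c\ge1$. But $1\le t<\abs{\cJ}$, so by connectedness some $V$ contains $p_i$ and $p_j$ for some $i\le t<j$ (with $V$ tracked at $p_i$), and for $c$ large enough that $\alpha_{p_i}-c<\alpha_{p_j}-C_V(n)$, \cref{lem:var-zero} gives $\abs{\prioD_{p_i,V}(\vec\alpha-c\vec v,n)}=0$, whence $W_{p_i}(\vec\alpha-c\vec v)=0$, contradicting $W_{p_i}(\vec\alpha-c\vec v)=W_{p_i}(\vec\alpha)\ge c_*/2>0$. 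This proves the gap bound and the lemma.

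The main obstacle is the Lipschitz control of $W_p$ in the handicap: unlike the flats case, where $W_p$ is a product of a bounded number of factors in $[0,1]$ and is honestly Lipschitz with constant $O(1/n)$ (\cref{lem:lip}), the $M(p)$-th root here makes $W_p$ only H\"older near the locus where a factor degenerates. The resolution is the interplay of two facts established independently: parameter counting keeps $\max_p W_p$ bounded below by a positive constant, and then the smallness of the consecutive gaps in the lexicographically minimal configuration forces exactly the points whose $W$-values get decreased to be bounded below, which is precisely the regime where $x\mapsto x^{1/M(p)}$ is Lipschitz. Getting the case split to control \emph{every} coordinate of the sorted vector (not merely the top $t$), so that the lexicographic comparison truly goes the right way, is the only place requiring care beyond a routine translation of \cref{sec:joints-of-planes}.
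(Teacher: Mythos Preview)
Your argument follows the paper's proof of \cref{lem:carbery-handicaps} almost line by line --- lexicographic minimisation over the finitely many possible vectors $(W_p)_p$, the step $\vec\alpha\to\vec\alpha-\vec v$, monotonicity and the connectedness endgame are all identical --- but you add one ingredient the paper does not have: the positivity input $\max_p W_p(\vec\alpha)\ge c_*>0$ coming from \cref{cor:car-param-ineq}. This is not cosmetic. The paper asserts $|W_p(\vec\alpha)-W_p(\vec\alpha')|\le C/(2n)$ directly from \cref{lem:var-lip}, mirroring the product bound $(1+\epsilon)^3-1$ in \cref{lem:handicaps}; but here $W_p$ carries an $M(p)$-th root, and $x\mapsto x^{1/M(p)}$ is only H\"older, not Lipschitz, near zero, so the displayed inequality in the paper does not follow as written when $M(p)>1$. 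Your fix --- use positivity to keep the top $t$ points' factors bounded away from zero (where the root is Lipschitz), and for the remaining points argue either via the Lipschitz regime or via a smallness bound $W_{p_j}(\vec\alpha')<c_*/8$ --- is a correct repair, and the constant-chasing you sketch (choose $c_*$, then $\delta_0$, then the Lipschitz constants, then $C'$, then $n_0$; absorb $n<n_0$ into the implicit constant) goes through. So: same approach, but you have supplied the missing justification for the Lipschitz step that the paper's proof leaves as a gap.
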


\begin{proof}
The proof is analogous to \cref{lem:handicaps} with appropriate modification. In this proof, we use $o(1)$ to denote $o_{\cJ, \cV_1, \dots, \cV_k,\omega;n\to\infty}(1)$. Let $(\delta_n)_{n\in\NN}$ be a sequence tending to $0$ sufficiently slowly as $n$ tends to infinity. Denote by $W_p(\vec{\alpha})$ the quantity (the dependence of $W_p(\vec{\alpha})$ on $n$ is suppressed in the notation).
\[
\frac{1}{\omega(p)}\left[\prod_{(\cS_1, \dots, \cS_r) \in \cM(p)}\prod_{V \in \cS_1 \cup \cdots \cup \cS_r}\frac{\abs{\prioD_{p,V}(\vec{\alpha},n)}}{\binom{n}{\dim V}}\right]^{1/M(p)}.
\]
We begin by noticing that, by \cref{lem:var-zero}, there exists some $c$ depending on $n$ and the joints configuration such that if $\alpha_p<\alpha_{p'}-c$ for two joints $p,p'$ on the same flat $V$, then $|\prioD_{p,V}(\vec{\alpha},n)|=0$, which shows that $W_p(\vec{\alpha})=0$. Therefore, although there are infinitely many choices for $\vec \alpha \in \ZZ^\cJ$,
there are only finitely many possible values of $(W_p)_{p\in\cJ}$ they can produce for a given $n$. Choose the $W_p(\vec{\alpha})$ so that after sorting $W_p$ in the descending order, it has the least lexicographical order. Suppose that the sorted result is
\[W_{p_1}(\vec{\alpha})\geq \cdots\geq W_{p_{|\cJ|}}(\vec{\alpha}).\]
It suffices to show that $W_{p_i}(\vec{\alpha})-W_{p_{i+1}}(\vec{\alpha})\leq \delta_n$ for all $i=1,\ldots,|\cJ|-1$.

Suppose for the sake of contradiction that the claim fails for some $i$. Let $t$ be the least positive integer such that $W_{p_t}(\vec{\alpha}) - W_{p_{t+1}}(\vec{\alpha}) > \delta_n$. Then let $\vec{v}=\vec e_{p_1}+\cdots+\vec e_{p_t}$ and $\vec{\alpha}'=\vec{\alpha}-\vec{v}$. Take a constant $C'$ larger than all the degrees of the varieties in the joints configuration. Similar to the proof of \cref{lem:handicaps}, we can apply \cref{lem:var-lip} to show that $\abs{\sabs{\prioD_{p,V}(\vec{\alpha},n)}-\sabs{\prioD_{p,V}(\vec{\alpha}',n)}}/\binom{n}{\dim V}=o(1)$. Together with the fact that $\abs{\prioD_{p,V}(\vec{\alpha},n)}/\binom{n}{\dim V}\leq C'+o(1)$ (guaranteed by \cref{eq:sum-var}) we can use a similar telescoping inequality to show that the difference between $W_p(\vec{\alpha},n)$ and $W_p(\vec{\alpha}',n)$ is at most $o(1)$. Therefore the difference is bounded by $\delta_n/2$ as long as $\delta_n$ tends to $0$ slowly enough.

Now, by the new monotonicity established in \cref{lem:var-mono}, 
we know that $W_{p_i}(\vec{\alpha}')\leq W_{p_i}(\vec{\alpha})$ if $i\leq t$, 
and $W_{p_i}(\vec{\alpha}')\geq W_{p_i}(\vec{\alpha})$ if $i>t$. 
If $W_p(\vec{\alpha})\neq W_p(\vec{\alpha}')$ for some $p\in\cJ$, 
then by \cref{eq:sum-var},
we know that there exist $i\leq t$ and $p_i\in V$ such that $\abs{\prioD_{p_i,V}(\vec{\alpha}',n)}<\abs{\prioD_{p_i,V}(\vec{\alpha},n)}$, 
resulting in $W_{p_i}(\vec{\alpha}')<W_{p_i}(\vec{\alpha})$. 
By the fact that $\abs{W_p(\vec{\alpha}')-W_p(\vec{\alpha})}\leq \delta_n/2$ for all $p\in\cJ$ and the assumption that $W_{p_t}-W_{p_{t+1}}>\delta_n$, 
we know that $W_{p_1}(\vec{\alpha}'),\ldots,W_{p_t}(\vec{\alpha}')$ are still the $t$ largest ones among $(W_p(\vec{\alpha}'))_{p\in\cJ}$. 
Hence, that $W_{p_i}(\vec{\alpha}')<W_{p_i}(\vec{\alpha})$ is a contradiction with the assumption of the minimality under the lexicographical order.

The previous paragraph shows that $W_p(\vec{\alpha})=W_p(\vec{\alpha}')$ for every $p\in\cJ$. As a consequence, $W_p(\vec{\alpha})=W_p(\vec{\alpha}-m\vec{v})$ for all positive integers $m$ and $p\in\cJ$. Since the joints configuration is connected, we can find $i\leq t<j$ such that $p_i$ and $p_j$ lie on the same variety. When $m$ is sufficiently large, we have $\alpha_{p_i}-m<\alpha_{p_j}-c$, which forces $W_{p_i}(\vec{\alpha}-m\vec{v})$ to be $0$. By the ordering, this shows that $W_{p_i'}(\vec{\alpha}-m\vec{v}) = 0$ for all $i'\leq i$. In particular, $W_{p_t}(\vec{\alpha})=W_{p_{t+1}}(\vec{\alpha})=0$, contradicting $W_{p_t}(\vec{\alpha}) - W_{p_{t+1}}(\vec{\alpha}) > \delta_n$.
\end{proof}

\begin{proof}[Proof of \cref{thm:main}\ref{part:const-mult}]
In this proof $o(1)$ denotes a quantity which goes zero as $n$ goes to infinity but can dependent arbitrarily on the joints configuration. Similar to the proof of \cref{thm:2-flats}, it suffices to consider the case where the joints configuration is connected. Set 
\[
s=m_1+\cdots+m_r,
\]
and
\[
J_\omega = \sum_{p\in\cJ}\omega(p) \quad \text{where} \quad
\omega(p)=M(p)^{1/(s-1)}.
\]
Choose $\vec{\alpha}$ according to \cref{lem:carbery-handicaps}. Then we can choose $W$ so that
\[\left|\frac{1}{\omega(p)}\left[\prod_{(\cS_1, \dots, \cS_r) \in \cM(p)}\prod_{V \in \cS_1 \cup \cdots \cup \cS_r}\frac{|\prioD_{p,V}|}{\binom{n}{\dim V}}\right]^{1/M(p)}-W\right|=o(1)\]
for all $p\in\cJ$.
Hence, by \cref{cor:car-param-ineq} (choosing $V_i(p)$ of \cref{cor:car-param-ineq} to give the minimum product below), we have that
\[\sum_{p\in\cJ}\omega(p)W\geq\sum_{p\in\cJ}\min_{(\cS_1, \dots, \cS_r) \in \cM(p)}\prod_{V \in \cS_1 \cup \cdots \cup \cS_r}\frac{|\prioD_{p,V}|}{\binom{n}{\dim V}}-o(1)\geq \frac{\binom{n+d}{d}}{\prod_{i=1}^{r}\binom{n}{k_i}^{m_i}}-o(1),\]
which, after rearrangement, shows that 
\[W\geq \frac{\prod_{i=1}^{r}(k_i!)^{m_i}}{J_{\omega}\cdot d!}-o(1).\]

Let $\cV_{p,i}$ be the set of varieties in $\cV_i$ that contain $p$. Then we have that for any joint $p\in \cJ$, 
\begin{align*}
M(p)\omega(p)W\leq& \sum_{(\cS_1, \dots, \cS_r) \in \cM(p)}\prod_{V \in \cS_1 \cup \cdots \cup \cS_r}\frac{\abs{\prioD_{p,V}}}{\binom{n}{\dim V}}+o(1)&& \text{\footnotesize [by AM-GM]}\\
\leq& \sum_{\cS_1\in \binom{\cV_{p,1}}{m_1}, \ldots, \cS_r\in \binom{\cV_{p,r}}{m_r}}\prod_{i=1}^{r}\prod_{V \in \cS_i}\frac{\abs{\prioD_{p,V}}}{\binom{n}{k_i}}+o(1)\\
=&\prod_{i=1}^{r}\sum_{\cS_i\in \binom{\cV_{p,i}}{m_i}}\prod_{V\in \cS_i}\frac{\abs{\prioD_{p,V}}}{\binom{n}{k_i}}+o(1)\\
\leq& \prod_{i=1}^{r}\frac{(\deg \cV_i )^{m_i}}{m_i!}\left(\sum_{V\in \cV_{p,i}}\frac{\abs{\prioD_{p,V}}}{\deg \cV_i \binom{n}{k_i}}\right)^{m_i}+o(1)\\
\leq& \frac{(\deg \cV_1)^{m_1}\cdots(\deg \cV_r)^{m_r}}{m_1!\cdots m_r!s^{s}}\left(\sum_{i=1}^{r}\sum_{V\in \cV_{p,i}}\frac{m_i\abs{\prioD_{p,V}}}{\deg \cV_i\binom{n}{k_i}}\right)^{s}+o(1).&& \text{\footnotesize [by AM-GM]}
\end{align*}
By taking the $s$-th root on both sides, summing over all $p$ using \cref{eq:sum-var} and noticing that $M(p)\omega(p)=\omega(p)^s$, we conclude that
\begin{align*}
\sum_{p\in \cJ}\omega(p)W^{1/s}
&\le \frac{1}{s} \left(\frac{\paren{\deg \cV_1}^{m_1}\cdots \paren{\deg \cV_r}^{m_r}}{m_1!\cdots m_r!}\right)^{1/s}
\sum_{i=1}^{r}\sum_{V\in \cV_i} \sum_{p\in \cJ \cap \cV_i} \frac{m_i\abs{\prioD_{p,V}}}{\deg \cV_i\binom{n}{k_i}}+o(1)
\\
&= \frac{1}{s} \left(\frac{\paren{\deg \cV_1}^{m_1}\cdots \paren{\deg \cV_r}^{m_r}}{m_1!\cdots m_r!}\right)^{1/s}
\sum_{i=1}^{r}\sum_{V\in \cV_i} \frac{m_i \deg V}{\deg \cV_i} +o(1)
\\
&= \frac{1}{s} \left(\frac{\paren{\deg \cV_1}^{m_1}\cdots \paren{\deg \cV_r}^{m_r}}{m_1!\cdots m_r!}\right)^{1/s}
\sum_{i=1}^{r} m_i +o(1)
\\
&=  \left(\frac{\paren{\deg \cV_1}^{m_1}\cdots \paren{\deg \cV_r}^{m_r}}{m_1!\cdots m_r!}\right)^{1/s}+o(1).
\end{align*}
Rearranging, we find
\[W\leq \frac{\paren{\deg \cV_1}^{m_1}\cdots \paren{\deg \cV_r}^{m_r}}{m_1!\cdots m_r!J_{\omega}^s}+o(1).\]
By comparing the lower and upper bounds on $W$, and letting $n \to \infty$ so that the $o(1)$ term vanishes, we have
\[
\frac{\prod_{i=1}^{r}(k_i!)^{m_i}}{J_{\omega}\cdot d!} \le \frac{\paren{\deg \cV_1}^{m_1}\cdots \paren{\deg \cV_r}^{m_r}}{m_1!\cdots m_r!J_{\omega}^s}.
\]
Rearranging gives the desired conclusion
\[
J_\omega \le \paren{ d! \prod_{i=1}^r \frac{(\deg \cV_i)^{m_i}}{k_i!^{m_i} m_i! }}^{1/(s-1)}. \qedhere 
\]
\end{proof}

\begin{proof}[Proof of \cref{thm:main}\ref{part:const-no-mult}]
As earlier, we may assume that the joints configuration is connected. Set $s=m_1+\cdots+m_r$ throughout the proof. Choose $\vec{\alpha}$ according to \cref{lem:carbery-handicaps} with $\omega(p)=1$ for all $p\in\cJ$. Then we can choose $W$ so that
\[\left|\left[\prod_{(\cS_1, \dots, \cS_r) \in \cM(p)}\prod_{V \in \cS_1 \cup \cdots \cup \cS_r}\frac{|\prioD_{p,V}|}{\binom{n}{\dim V}}\right]^{1/M(p)}-W\right|=o(1)\]
for all $p\in\cJ$. Hence, by \cref{cor:car-param-ineq}, we have that
\[\sum_{p\in\cJ}W\geq\sum_{p\in\cJ}\min_{(\cS_1, \dots, \cS_r) \in \cM(p)}\prod_{V \in \cS_1 \cup \cdots \cup \cS_r}\frac{|\prioD_{p,V}|}{\binom{n}{\dim V}}-o(1)\geq \frac{\binom{n+d}{d}}{\prod_{i=1}^{r}\binom{n}{k_i}^{m_i}}-o(1),\]
which, after rearrangement, shows that 
\[W\geq \frac{\prod_{i=1}^{r}(k_i!)^{m_i}}{\abs{\cJ}\cdot d!}-o(1).\]

For each $p\in\cJ$, let $(\cS_1(p), \dots ,\cS_r(p))\in \cM(p)$ be the element $S$ of $\cM(p)$ such that 
\[\prod_{V \in \cS_1 \cup \cdots \cup \cS_r}\frac{\abs{\prioD_{p,V}}}{\binom{n}{\dim V}}\]
is maximized. Then $W\leq \prod_{V \in \cS_1(p) \cup \cdots \cup \cS_r(p)}|\prioD_{p,V}|/\binom{n}{\dim V}+o(1)$, which shows that
\begin{align*}
s\abs{\cJ}W^{1/s}\paren{\deg \cV_1}^{-m_1/s}&\cdots\paren{\deg \cV_r}^{-m_r/s}
\\& \leq s\sum_{p\in\cJ}\left(\prod_{i=1}^{r} m_i^{-m_i}\prod_{V\in \cS_i(p)}\frac{m_i\abs{\prioD_{p,V}}}{\deg \cV_i\binom{n}{k_i}}\right)^{1/s}+o(1)\\
& \leq  \frac{1}{m_1^{m_1/s}\cdots m_r^{m_r/s}}\sum_{p\in\cJ}\sum_{i=1}^{r}\sum_{V\in \cS_i(p)}\frac{m_i\abs{\prioD_{p,V}}}{\deg \cV_i\binom{n}{k_i}}+o(1)&& \text{\footnotesize [by AM-GM]}\\
& \leq \frac{1}{m_1^{m_1/s}\cdots m_r^{m_r/s}}\sum_{i=1}^{r}\sum_{V\in\cV_i}\sum_{p\in\cJ\cap V}\frac{m_i\abs{\prioD_{p,V}}}{\deg \cV_i\binom{n}{k_i}}+o(1)\\
& \leq \frac{1}{m_1^{m_1/s}\cdots m_r^{m_r/s}}\sum_{i=1}^{r}\sum_{V\in\cV_i} \frac{m_i \deg V}{\deg \cV_i}+o(1) &&\text{\footnotesize [by \cref{eq:sum-var}]}\\
& \leq \frac{1}{m_1^{m_1/s}\cdots m_r^{m_r/s}}\sum_{i=1}^{r}m_i+o(1)\\
& = \frac{s}{m_1^{m_1/s}\cdots m_r^{m_r/s}}+o(1).
\end{align*}
By rearranging, we get that
\[W\leq \frac{\paren{\deg \cV_1}^{m_1}\cdots \paren{\deg \cV_r}^{m_r}}{m_1^{m_1}\cdots m_r^{m_r}\abs{\cJ}^s}+o(1).\]
By comparing the lower and upper bounds on $W$, and letting $n \to \infty$ so that the $o(1)$ term vanishes, we have
\[
\frac{\prod_{i=1}^{r}(k_i!)^{m_i}}{\abs{\cJ}\cdot d!}
\leq \frac{\paren{\deg \cV_1}^{m_1}\cdots \paren{\deg \cV_r}^{m_r}}{m_1^{m_1}\cdots m_r^{m_r}\abs{\cJ}^s}.
\]
Rearranging gives the desired conclusion
\[
\abs{\cJ} \le \paren{ d! \prod_{i=1}^r \frac{(\deg V_i)^{m_i}}{{k_i!}^{m_i} m_i^{m_i}}}^{1/(s-1)}. \qedhere 
\]
\end{proof}

\section*{Acknowledgments} This research was conducted while Yu was a participant and Tidor was a mentor in the Summer Program in Undergraduate Research (SPUR) of the MIT Mathematics Department. We thank David Jerison and Ankur Moitra for their role in advising the program. 
We also thank the anonymous referee for a careful reading of the manuscript and helpful comments.

Tidor was supported by the NSF Graduate Research Fellowship Program DGE-1745302.

Zhao was supported by NSF Award DMS-1764176, the MIT Solomon Buchsbaum Fund, and a Sloan Research Fellowship.


\end{document}